\newlength{\defbaselineskip} \setlength{\defbaselineskip}{\baselineskip}
\theoremstyle{plain}
\newtheorem{thm}{Theorem}[section]
\newtheorem{cor}[thm]{Corollary}
\newtheorem{con}[thm]{Conjecture}
\newtheorem{df}[thm]{Definition}
\newtheorem{propdef}[thm]{Proposition-Definition}
\newtheorem{lema}[thm]{Lemma}
\newtheorem{prop}[thm]{Proposition}
\newtheorem{obs}[thm]{Proposition}
\newtheorem{exm}[thm]{Example}
\newtheorem{question}[thm]{Question}
\newtheorem{fact}[thm]{Fact}
\newtheorem{rem}[thm]{Remark}
\newtheorem{pr}{Algorithm}
\theoremstyle{definition} 
\theoremstyle{definition}  %
 \numberwithin{equation}{section}
\def\p{\mathbb{P}}
\def\n{\mathbb{N}}
\def\fa{\begin{fact}}
\def\kfa{\end{fact}}
\newcommand{\fromto}[2]{#1, \dotsc, #2}
\newcommand{\setfromto}[2]{\set{\fromto{#1}{#2}}}
\renewcommand\theenumi{(\roman{enumi})}
\renewcommand\labelenumi{\theenumi}
\def\ccC{\mathcal{C}}
\def\ccE{\mathcal{E}}
\def\ccH{\mathcal{H}}
\def\R\mathcal{R}
\newcommand{\set}[1]{\left\{#1\right\}}
\DeclareMathOperator{\Spec}{Spec}
\DeclareMathOperator{\rad}{rad}
\DeclareMathOperator{\length}{length}
\newcommand{\len}{\length}
\DeclareMathOperator{\Hilb}{Hilb}
\DeclareMathOperator{\HilbGor}{Hilb^{Gor}}
\DeclareMathOperator{\HilbP}{HilbP}
\DeclareMathOperator{\HilbPGor}{HilbP^{Gor}}
\DeclareMathOperator{\HilbSm}{Hilb^{sm}}
\DeclareMathOperator{\HilbPSm}{HilbP^{sm}}
\def\p{\mathbb{P}}
\def\ob{\begin{obs}}
\def\kob{\end{obs}}
\def\dow{\begin{proof}}
\def\kdow{\end{proof}}
\def\tw{\begin{thm}}
\def\ktw{\end{thm}}
\def\hip{\begin{con}}
\def\khip{\end{con}}
\def\lem{\begin{lema}}
\def\klem{\end{lema}}
\def\ex{\begin{exm}}
\def\prog{\begin{pr}}
\def\kprog{\end{pr}}
\def\wn{\begin{cor}}
\def\kwn{\end{cor}}
\def\uwa{\begin{rem}}
\def\kuwa{\end{rem}}
\def\kex{\end{exm}}
\def\dfi{\begin{df}}
\def\kdfi{\end{df}}
\definecolor{zielony}{rgb}{0.5, 0.9, 0.1}
\definecolor{czerwony}{rgb}{0.9, 0.2, 0.1}
\definecolor{niebieski}{rgb}{0.3, 0.1, 0.9}
\newenvironment{red}{\color{czerwony}}{}
\newcommand{\bred}{\begin{red}}
 \newcommand{\ered}{\end{red}}
\title{Constructions of $k$-regular maps using finite local schemes}
\date{26th September 2016}
\author[J.~Buczy\'nski]{Jaros\l{}aw Buczy\'nski}
\thanks{J.~Buczy\'nski is supported by a grant Iuventus Plus of the Polish Ministry of Science, project 0301/IP3/2015/73, and by a scholarship of Polish Ministry of Science}
 \address{Jaros\l{}aw Buczy\'nski\\
Faculty of Mathematics, Computer Science and Mechanics\\
University of Warsaw\\
ul. Banacha 2\\
02-097 Warszawa\\
Poland\\
and
Institute of Mathematics of Polish Academy of Sciences\\
  ul. \'Sniadeckich 8\\
  00-656 Warszawa, Poland
 }
 \email{jabu@mimuw.edu.pl}
\author[T.~Januszkiewicz]{Tadeusz Januszkiewicz}
\thanks{T.~Januszkiewicz is supported by Polish National Science Center, project 2012/06/A/ST1/00259}
 \address{Tadeusz Januszkiewicz\\
Institute of Mathematics of Polish Academy of Sciences\\
  ul. \'Sniadeckich 8\\
  00-656 Warszawa, Poland}
 \email{tjan@impan.pl}
\author[J.~Jelisiejew]{Joachim Jelisiejew}
\thanks{J.~Jelisiejew is supported by Polish National Science Center, project 2014/13/N/ST1/02640}
\address{Joachim Jelisiejew\\
Faculty of Mathematics, Computer Science and Mechanics\\
University of Warsaw\\
ul. Banacha 2\\
02-097 Warszawa, Poland}
\email{j.jelisiejew@mimuw.edu.pl}
\author[M.~Micha{\l}ek]{Mateusz Micha{\l}ek}
 \thanks{M.~Micha{\l}ek is supported by a grant Iuventus Plus of the Polish Ministry of Science, project 0301/IP3/2015/73}
 \address{Mateusz Micha{\l}ek\\
 Freie Universit\"at\\
 Arnimallee 3\\
 14195 Berlin, Germany\\
 and Institute of Mathematics of Polish Academy of Sciences\\
 ul. \'Sniadeckich 8\\
 00-956 Warszawa, Poland}
 \email{wajcha2@poczta.onet.pl}
\newcommand{\RR}{\mathbb{R}}
\newcommand{\CC}{\mathbb{C}}
\newcommand{\FF}{\mathbb{F}}
\newcommand{\BB}{\mathbb{B}}
\newcommand{\PP}{\mathbb{P}}
\newcommand{\ZZ}{\mathbb{Z}}
\DeclareMathOperator{\Sym}{Sym}
\newcommand{\gota}{\mathfrak{a}}
\definecolor{Zielony}{RGB}{78,104,18} 
\definecolor{JasnoZielony}{RGB}{195,204,110} 
\keywords{$k$-regular embeddings, secants,  punctual Hilbert scheme, finite Gorenstein schemes}
\subjclass[2010]{Primary: 53A07; Secondary: 57R42, 14C05, 13H10}
\begin{document}
\begin{abstract}
	A continuous map $\RR^m\rightarrow \RR^N$ or $\CC^m\rightarrow \CC^N$ is called $k$-regular
	if the images of any $k$ points are linearly independent.
	Given integers $m$ and $k$  a problem going back to Chebyshev and  Borsuk is
	to determine the minimal value of $N$ for which such maps exist.
	The methods of algebraic topology provide lower bounds for $N$,
	however there are very few results on the existence of such maps for particular values $m$  and $k$.
	Using the methods of algebraic geometry we construct $k$-regular maps.
	We relate the upper bounds on $N$ with the dimension of the locus of certain Gorenstein schemes
	  in the punctual Hilbert scheme.
	The computations of the dimension of this family is explicit for $k\leq 9$,
          and we provide explicit examples for $k\leq 5$.
	We also provide upper bounds for arbitrary $m$ and $k$.
\end{abstract}

\maketitle

\section{Introduction}

\subsection{\texorpdfstring{$k$}{k}-regularity}
Given an $m$-dimensional manifold $M$, we say that a continuous map $f\colon M \to \RR^N$
  is  \emph{$k$-regular}  if the images of any $k$ distinct points of $M$ span a $k$-dimensional linear subspace.
\begin{exm}\label{exam_vandermonde}
   Consider the map $\RR^1\to \RR^k$ given by
   \[
      t \mapsto (1, t, t^2, \dotsc, t^{k-1}).
   \]
The Vandermonde determinant shows that this map is $k$-regular.
\end{exm}
For more precise  definition and several variants of it,  see Section~\ref{sect_independence}.
The definition of \mbox{$k$-regular} maps in this form was introduced by Borsuk \cite{Borsuk}
in 1957. An equivalent concept of $k$-interpolating-at-arbitrary-nodes was
known since Chebyshev with contributions by Haar \cite{haar__kreg}, Kolmogorov \cite{kolmogorov__kreg} and others. However only after Borsuk's paper the topologists got interested in this subject.

 Embeddings are precisely examples of affinely $2$-regular maps, as the image
of any two points of the manifold spans an affine line --- a maximal dimensional affine space.
Although the problem of embeddings of affine spaces is trivial, it turned out that $k$-regular maps from affine spaces are very interesting.
The problem when such maps exist attracted the attention of many algebraic topologists
 \cite{kreg1,kreg2,kreg3,kreg4,kreg5,kreg6}.

The existence of $k$-regular maps is also important in approximation theory,
by their connection with \emph{interpolation spaces} \cite{kreg3,wulbert1999interpolation, interpolation__Shekhtman__poly, interpolation__Shekhtman__ideal}. Let $V$ be a subspace of
continuous functions on $\RR^m$, then we say that $V$ is a
$k$-\emph{interpolation space} if for every set $\mathcal{Z}$ of $k$ distinct points on
$\RR^m$ and every function $f\colon\mathcal{Z} \to \RR$,
there is a function $g\in V$ such that $f(z) = g(z)$ for every $z\in
\mathcal{Z}$. It is an easy observation that an $N$-dimensional
$k$-interpolation space exists if and only if a $k$-regular map
$\RR^m\to \RR^N$ exists.

Clearly obtaining efficient upper bounds on the dimension of $V$ is important in this context.
This connection should be also a valuable source of inspiration for topologists and algebraic geometers, as other interpolation problems give rise to other classes of maps, reminiscent of $k$-regular ones. An important precedent here is Alexander-Hirschowitz Theorem dealing with interpolation together with derivatives.

Very recently lower bounds for the dimension $N$ of the ambient space of the embedding were further improved by
   Blagojevi\'c, Cohen, L\"uck and Ziegler for real \cite{Ziegler}
   and complex \cite{blagojevic_cohen_luck_ziegler_On_highly_regular_embeddings_2} affine spaces.
   
While over the last 50 years the methods of algebraic topology improved the lower bounds on $N$,
still very few examples of efficient $k$-regular maps are known.
Those that we know rely on specific constructions for small $k$
--- cf. \cite{interpolation__Shekhtman__poly}, \cite[Example~2.6]{Ziegler}   ---
or transversality arguments \cite{kreg6}, \cite{interpolation__Shekhtman__transversality} (see also Lemma~\ref{lem_projection_from_H_disjoint_with_secant} below).
Probably the reason why transversality arguments are not efficient here is
that they construct maps from any source manifolds, not just affine space.

All the presentation above can be repeated with the field of real numbers $\RR$ replaced with complex numbers $\CC$.
That is, we also look for $k$-regular continuous functions $\CC^m \to \CC^N$.

\subsection{Main results}
In the present paper we propose an approach relying on algebraic geometry.
The basic underlying idea is simple.
First, we consider a Veronese map (given by all monomials of fixed degree $d$).
Such maps, when the degree $d$ of the monomials is sufficiently high, are known to be $k$-regular.
Then, we project from a sufficiently high dimensional linear subspace $H$.
It turns out that the dimension of possible $H$ is closely related to the numerical properties
of the smoothable and Gorenstein loci of the punctual Hilbert scheme
(see below, and Section~\ref{sect_Hilbert} for more details about Hilbert scheme and its Gorenstein locus).

\begin{thm} \label{thm_main_bound_intro_any_k}
   There exist $k$-regular maps $\RR^m \to \RR^{(m+1)(k-1)}$ and $\CC^m \to \CC^{(m+1)(k-1)}$.
\end{thm}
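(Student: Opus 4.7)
The plan is to follow the projection-of-Veronese strategy announced in the introduction. Fix an integer $d\geq k-1$ and consider the affine Veronese map
\[
\nu_d \colon \RR^m \longrightarrow \RR^{\binom{m+d}{d}}, \qquad p \longmapsto \bigl(x^I(p)\bigr)_{|I|\leq d}.
\]
A Vandermonde-type interpolation argument shows that $\nu_d$ is linearly $k$-regular for every such $d$. The task is therefore to construct a surjective linear map $\pi \colon \RR^{\binom{m+d}{d}} \twoheadrightarrow \RR^{(m+1)(k-1)}$ such that $\pi \circ \nu_d$ remains $k$-regular, i.e.\ such that $\ker(\pi)$ meets the span $\mathrm{span}\bigl(\nu_d(p_1),\ldots,\nu_d(p_k)\bigr)$ only at the origin for every $k$-tuple of distinct points.

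The crucial technical step, which should appear later as Lemma~\ref{lem_projection_from_H_disjoint_with_secant}, recasts this condition in terms of the Hilbert scheme: by a closure/limit argument, the disjointness condition on $k$-tuples of distinct points is equivalent to disjointness from $\langle \nu_d(Z)\rangle$ as $Z$ ranges over the smoothable length-$k$ Gorenstein subschemes of $\RR^m$. Indeed every $k$-tuple of distinct points is a (trivially smoothable) Gorenstein length-$k$ subscheme, and conversely every smoothable Gorenstein length-$k$ subscheme is a limit of such tuples. This replaces an uncountable pointwise condition by a single condition parametrised by a constructible subset of the Hilbert scheme, on which geometric dimension estimates are available.

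One then chooses $\ker(\pi)$ by a generic choice of the appropriate codimension. The family of spans fibres over a locally closed subset of the Gorenstein Hilbert scheme, with fibres of projective dimension $k-1$. Restricting to subschemes supported at a single point, so that the support contributes $m$ dimensions of translation and the punctual Gorenstein Hilbert scheme contributes its own dimension (controlled by Brian\c{c}on-type bounds and the finer analysis of later sections), the total family has projective dimension at most $(m+1)(k-1)-1$. A generic linear subspace of complementary codimension avoids it, and passing to an affine chart while translating between projective and linear $k$-regularity via $p\mapsto(1, f(p))$ yields a linear $k$-regular map to $\RR^{(m+1)(k-1)}$. The proof over $\CC$ is verbatim the same, since the construction is algebraic.

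The main obstacle is the dimension count: the naive estimate using only that the smoothable component of the full Hilbert scheme $\mathrm{Hilb}^k(\RR^m)$ has dimension $km$ gives only the weaker target $k(m+1)$, which is $m+1$ too large. Saving the missing $m+1$ dimensions requires the additional structural information about the punctual Gorenstein locus developed in Section~\ref{sect_Hilbert} and its sequel, together with a Bertini-type argument performed over the Hilbert scheme rather than directly in the ambient projective space.
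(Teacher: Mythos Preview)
There is a genuine gap in the dimension count, and it stems from missing the paper's central \emph{local} idea. You attempt a global projection: find $\ker(\pi)$ disjoint from the span of every smoothable length-$k$ scheme in $\RR^m$. But then your own arithmetic fails. Counting $m$ for ``translation of the support'', plus the dimension of the punctual smoothable locus (bounded only by $(k-1)m-1$, see Lemma~\ref{lem_bounds_on_dim_HilbPSm}), plus $k-1$ for the linear-span fibre, gives
\[
(k-1)m - 1 + m + (k-1) = k(m+1) - 2,
\]
not $(m+1)(k-1)-1$. You are off by exactly the $m$ translation dimensions you inserted; with them present you recover nothing beyond the naive secant bound.

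The paper saves those $m$ dimensions not by a sharper Hilbert-scheme estimate but by restricting the domain. One fixes a single point $p$, defines the \emph{areole} $\gota_k(X,p)$ as the union of spans of smoothable schemes supported \emph{only at $p$}, and chooses a generic $H$ disjoint from $\gota_k(X,p)$. This $H$ will certainly meet secants of far-away $k$-tuples, so the projection is not $k$-regular on all of $\FF^m$. The key compactness step (Lemma~\ref{lem_projection_from_H_disjoint_with_areole}, using properness of the Hilbert--Chow morphism) shows that $H$ nevertheless misses the spans of all $k$-tuples lying in a sufficiently small ball $\BB_{\FF}^m$ around $p$; one then composes with a homeomorphism $\BB_{\FF}^m \simeq \FF^m$ to obtain a (non-algebraic) $k$-regular map on $\FF^m$. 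With this localisation the count is simply $\dim\gota_k \le \dim\HilbPSm_k(\FF\PP^m,p) + k-1 \le (k-1)(m+1)-1$, and no translation term appears.

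Two smaller corrections. For Theorem~\ref{thm_main_bound_intro_any_k} the Gorenstein locus plays no role: the bound uses the smoothable punctual Hilbert scheme via the elementary Lemma~\ref{lem_bounds_on_dim_HilbPSm}, not Brian\c{c}on-type results (those enter only for the sharper Theorem~\ref{thm_main_bound_intro_small_k}). And the real case is not quite ``verbatim'': the paper first argues over $\CC$, then invokes base change $\dim\Hilb_k(\RR\PP^m)=\dim\Hilb_k(\CC\PP^m)$ to ensure the centre of projection can be chosen defined over $\RR$.
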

For small values of $k$ or $m$, we can make these constructions stronger:
\begin{thm} \label{thm_main_bound_intro_small_k}
   If $k \le 9$ or $m\le 2$, then there exist $k$-regular maps $\RR^m \to \RR^{m(k-1)+1}$  and $\CC^m \to \CC^{m(k-1)+1}$.
\end{thm}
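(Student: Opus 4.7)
The plan is to realize the desired map as a linear projection of the Veronese embedding $v_d\colon\CC^m \to V_d^*$ (where $V_d$ denotes the space of polynomials of degree at most $d$, and $d$ is taken large enough that $v_d$ is itself $k$-regular), and to control the projection via the punctual Gorenstein Hilbert scheme. A linear surjection $\pi\colon V_d^*\twoheadrightarrow\CC^{N'}$ yields a $k$-regular map $\pi\circ v_d$ if and only if $\ker\pi$ is disjoint from every span $\langle\operatorname{ev}_{p_1},\ldots,\operatorname{ev}_{p_k}\rangle$ over $k$ distinct points. The goal is to construct such a $\ker\pi$ with codimension only $N' = m(k-1)+1$, which is strictly smaller than what a generic projection would afford.

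The key observation, set up earlier in the paper, is that each smoothable Gorenstein length-$k$ subscheme $Z\subset\CC^m$ carries a canonical $k$-dimensional subspace $\langle Z\rangle\subset V_d^*$, dual to the surjection $V_d\twoheadrightarrow\ccO_Z$, and that for reduced $Z=\{p_1,\ldots,p_k\}$ this is exactly the secant span one must avoid. The construction of $\ker\pi$ forces it to contain $\langle Z\rangle$ for a carefully chosen family $\ccF$ of non-reduced Gorenstein length-$k$ schemes drawn from the punctual stratum of $\HilbGor_k(\CC^m)$. One takes $\ker\pi$ to be the linear span of $\bigcup_{Z\in\ccF}\langle Z\rangle$ together with a generic complement of the appropriate dimension. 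The codimension bound $N' = m(k-1)+1$ then emerges from the dimension of $\ccF$ added to $k$, combined with the incidence count controlling how much further ``generic slack'' is still available.

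The two hypotheses of the theorem correspond to two different inputs for this dimension count. For $k\le 9$, one invokes the irreducibility of $\HilbGor_k(\CC^m)$ (in the range known from Casnati--Notari and subsequent work) together with explicit formulas for the dimension of its punctual stratum. For $m\le 2$, one appeals instead to the Brian\c{c}on--Fogarty theorem that $\Hilb^k(\CC^2)$ is smooth irreducible of dimension $2k$ with punctual stratum of dimension $k-1$, and uses that every finite subscheme of $\CC^m$ with $m\le 2$ is automatically smoothable, so the Gorenstein restriction can be dropped. The main technical obstacle is the semicontinuity step: one must verify that the constructed $\ker\pi$ really avoids the spans of \emph{all} reduced $k$-tuples, not just generic ones. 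This is argued by contradiction --- a violation would, upon degenerating the points, produce a Gorenstein length-$k$ scheme whose span meets $\ker\pi$ nontrivially in a way forbidden by the generic choice of $\ccF$ --- and it is precisely here that irreducibility of the ambient Hilbert scheme plays the crucial role.
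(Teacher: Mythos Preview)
Your proposal has the right ingredients --- Veronese embedding, projection, punctual Gorenstein Hilbert scheme, the dimension count $(k-1)(m-1)$ --- but the construction is described backwards, and a crucial step is missing.

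You write that $\ker\pi$ is forced to \emph{contain} the spans $\langle Z\rangle$ for $Z$ in a family $\ccF$ of punctual Gorenstein schemes, taking $\ker\pi$ to be the linear span of $\bigcup_{Z\in\ccF}\langle Z\rangle$ plus a generic complement. This cannot be right: if $\ker\pi \supset \langle Z\rangle$ for even a single punctual scheme $Z$ supported at $p$, then $\ker\pi$ contains $\operatorname{ev}_p$ together with a $k$-dimensional space of jets at $p$, and the projection collapses nearby points, destroying $k$-regularity. Moreover your semicontinuity paragraph then contradicts this: you say a bad reduced $k$-tuple would degenerate to a Gorenstein scheme whose span meets $\ker\pi$ ``in a way forbidden by the generic choice of $\ccF$'' --- but for $Z\in\ccF$ the span is \emph{entirely inside} $\ker\pi$ by your own construction. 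The paper does the opposite: the center of projection $H$ (projectivization of $\ker\pi$) is chosen \emph{generically} so as to \emph{avoid} the areole $\gota_k(X,p)=\overline{\bigcup_Z\langle Z\rangle}$, the union of spans of smoothable punctual schemes at a fixed point $p$. The dimension bound $\dim\gota_k(X,p)\le (k-1)m$, coming from $\dim\HilbPGor_i(X,p)=(i-1)(m-1)$ together with the fact that the span of any scheme is covered by spans of its Gorenstein subschemes, is what allows such an $H$ of the correct codimension.

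The second gap is the passage from ``$H$ avoids the areole'' to ``$H$ avoids all reduced secants.'' Your degeneration-by-contradiction argument does not work globally on $\CC^m$: a limit of $k$ arbitrary points need not be supported at a single point, nor Gorenstein, and irreducibility of $\HilbGor_k$ does not force either. The paper's mechanism is essentially \emph{local and topological}: the areole $\gota_p$ is compact (image of the compact $\HilbPSm_k(X,p)$ under a proper map), so the closed $H$ avoids an open tubular neighborhood of $\gota_p$; by properness of the Hilbert--Chow morphism this neighborhood contains the spans of all $k$-tuples lying in a sufficiently small ball $\BB^m$ around $p$. The projection is therefore $k$-regular only on $\BB^m$, and one finishes via the non-algebraic homeomorphism $\BB^m\simeq\CC^m$. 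Without this compactness-plus-shrinking step the argument does not close, and your sketch gives no indication of it.
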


\renewcommand{\arraystretch}{1.3}
\def\ubound{\parbox{1.1cm}{\small upper\\bound}}
\def\lbound{\parbox{1.1cm}{\small lower\\bound}}
\def\exactvalue{\parbox{1.1cm}{\small exact\\value}}

\begin{table}[hbt]
\makebox[\textwidth][c]{
\begin{tabular}{ccc l c l cc l cc}
\toprule
$k$&\multicolumn{2}{c}{$m$ arbitrary}  &&$m = 1$&&\multicolumn{2}{c}{$m = 2$}&&\multicolumn{2}{c}{$m = 3$}\\
\cmidrule{2-3}\cmidrule{5-5}\cmidrule{7-8}\cmidrule{10-11}
&\lbound&\ubound& &\exactvalue&&\lbound&\ubound&&\lbound&\ubound\\
\midrule
\footnotesize{arbitrary}& ? &\footnotesize{$(m+1)(k-1)$} &&\cellcolor{JasnoZielony}$k$&&\footnotesize{$2k-\alpha_2(k)$}&\footnotesize{$2k-1$}&&\footnotesize{$3k- 2\alpha_3(k)$}&\footnotesize{$4k-4$}\\

\rowcolor{JasnoZielony}$2$&\multicolumn{2}{c}{$m+1$}&&$2$&&\multicolumn{2}{c}{$3$}&&\multicolumn{2}{c}{$4$}\\

\rowcolor{JasnoZielony}$3$&\multicolumn{2}{c}{$2m+1$}&&$3$&&\multicolumn{2}{c}{$5$}&&\multicolumn{2}{c}{$7$}\\

$4$&$2m+1$&$3m+1$&&\cellcolor{JasnoZielony}$4$&\cellcolor{JasnoZielony}&\multicolumn{2}{c}{\cellcolor{JasnoZielony}$7$}&& $8$  &$10$\\

\rowcolor{JasnoZielony}$5$&\multicolumn{2}{c}{$4m+1$}&&$5$&&\multicolumn{2}{c}{$9$}&&\multicolumn{2}{c}{$13$}\\

$6$& $4m+1$&$5m+1$&&\cellcolor{JasnoZielony}$6$&&$ 10 $&$11$&& $14$ &$16$\\

\rowcolor{JasnoZielony}$7$&\multicolumn{2}{c}{$6m+1$}&&$7$&&\multicolumn{2}{c}{$13$}&&\multicolumn{2}{c}{$19$}\\

$8$& $6m+1$&$7m+1$&&\cellcolor{JasnoZielony}$8$&\cellcolor{JasnoZielony}&\multicolumn{2}{c}{\cellcolor{JasnoZielony}$15$}&& 19 &$22$\\

$9$&$6m+1$&$8m+1$&&\cellcolor{JasnoZielony}$9$&&$ 16 $&$17$&&\multicolumn{2}{c}{\cellcolor{JasnoZielony} $25$ }\\

$10$&$6m+1$&$9m + 9$&&\cellcolor{JasnoZielony}$10$&&$ 18 $&$19$&& $26$ &$36$\\

{prime}&{$m(k-1)+1$}&{$(m+1)(k-1)$}&&\cellcolor{JasnoZielony}$k$&\cellcolor{JasnoZielony}&\multicolumn{2}{c}{\cellcolor{JasnoZielony}$2k-1$}&&{$3k-2$}&{$4k-4$}\\
\bottomrule
\end{tabular}}
\vspace*{\baselineskip}

\caption{Let $N_0 = N_0(k, m)$ be the minimal integer such that
there exists a $k$-regular map $\CC^m \to \CC^{N_0}$. The table provides
bounds
for $N_0$.  The upper bounds are obtained in this article. The lower bounds
have been obtained by
   \cite{blagojevic_cohen_luck_ziegler_On_highly_regular_embeddings_2}
   (see Theorem~\ref{thm_lower_bound}), or immediately follow from their result,
   since if there is no $(k-1)$-regular map then there is no $k$-regular map between the same spaces.
   Cases when the bounds coincide are shaded green.}\label{tab_complex_dimensions}
\end{table}

Particularly the dimensions arising in Theorem~\ref{thm_main_bound_intro_small_k} are very close to the lower bound 
obtained by Blagojevi\'c, Cohen, L{\"u}ck, and Ziegler by means of algebraic topology:   
\begin{thm}[{\cite{
 blagojevic_cohen_luck_ziegler_On_highly_regular_embeddings_2}}]\label{thm_lower_bound}
Let $k,m\in \n$ and let $p$ be a prime number. 
There is no $k$-regular map $\CC^m\rightarrow \CC^N$ if:
\begin{itemize}
\item $k=p$ and $N\leq m(k-1)$ or
\item $m=p^t$ for some $t\geq 1$ and $N\leq m(k-\alpha_p(k))+\alpha_p(k)-1$,
        where $\alpha_p(k)$ denotes the sum of coefficients in the $p$-adic expansion of $k$
        (i.e.~in the sum of digits in the presentation of $k$ in the numerical system with base $p$).
\end{itemize}
\end{thm}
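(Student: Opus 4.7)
The statement is a topological non-existence result, so my plan is to convert it into an equivariant obstruction and detect that obstruction by characteristic-class arguments with $\FF_p$ coefficients.

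First, I would translate $k$-regularity into an equivariant-map problem. A $k$-regular continuous map $f\colon\CC^m\to\CC^N$ gives, by construction, an $S_k$-equivariant continuous map
\[
\Phi\colon F(\CC^m,k)\longrightarrow \mathrm{Inj}_\CC(\CC^k,\CC^N),\qquad (x_1,\dotsc,x_k)\mapsto (f(x_1),\dotsc,f(x_k)),
\]
where $F(\CC^m,k)$ is the ordered configuration space of $k$ distinct points and the target (which deformation-retracts onto the complex Stiefel manifold $V_k(\CC^N)$) carries the obvious $S_k$-action by permutation of coordinates. So it suffices to prove that no such $S_k$-equivariant map exists when $N$ is too small. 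In fact, it suffices to prove non-existence after restricting to a convenient $p$-subgroup $G \subset S_k$: for case (i) take $G = \ZZ/p$ generated by a $p$-cycle; for case (ii) take $G$ the image of the translation action of $W = (\ZZ/p)^t$ on $\CC^m \cong \CC[W]$ via its regular embedding into $S_{p^t}$ (and intertwine with scaling of the labels, to obtain a suitable subgroup of $S_k$ acting both on $\CC^m$ and on the $k$ labels).

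Second, I would pass to Borel constructions: apply $EG\times_G(-)$ to both sides of $\Phi$, obtaining a map of bundles over $BG$. On the target side there is a canonical complex rank-$N$ bundle $\xi_{\mathrm{tgt}}$: pull back the tautological $N$-plane to $V_k(\CC^N)$ and then form its $G$-quotient over $BG$. Because $V_k(\CC^N)$ is highly connected and $N$ is large enough compared to $k$, the Borel construction of the target looks like the Borel construction of $(\CC^N)^k$ with linearly dependent tuples removed, and its cohomology over $\FF_p$ is controlled by the $G$-equivariant Chern classes of the direct sum of the $k$ permuted copies of $\CC^N$. On the source side, the analogous $G$-equivariant bundle $\xi_{\mathrm{src}}$ is a direct sum of $k$ copies of $\CC^m$ permuted by $G$, whose $G$-equivariant Chern/Euler class in $H^{2mk}(EG\times_G F(\CC^m,k);\FF_p)$ can be computed directly from the action of $G$ on $\{1,\dotsc,k\}$.

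Third, compare top Chern classes: $\Phi$ must pull back $c_{\mathrm{top}}(\xi_{\mathrm{tgt}})$ to a class in $H^{2N}$ that agrees with the restriction of $c_{\mathrm{top}}(\xi_{\mathrm{src}})$. The key computation, going back to the work of Cohen--Handel and the recent sharpening by Blagojevi\'c--Cohen--L\"uck--Ziegler, is that $c_{\mathrm{top}}(\xi_{\mathrm{src}})$ is a \emph{nonzero} class of degree exactly $2\bigl(m(k-\alpha_p(k))+\alpha_p(k)-1\bigr)+2$ in the relevant equivariant cohomology; the appearance of $\alpha_p(k)$ comes from Kummer's theorem on $p$-adic valuations of multinomial coefficients that arise when expanding $c_{\mathrm{top}}$ of an induced representation. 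If $N$ were at most the bound stated in the theorem, then on cohomological-degree grounds the class on the target would have to vanish, contradicting the non-vanishing on the source. This yields the asserted lower bound.

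The main obstacle is the non-vanishing Chern-class computation in step three: one must identify precisely which monomial in the generators of $H^*(BG;\FF_p)$ represents $c_{\mathrm{top}}(\xi_{\mathrm{src}})$, verify its non-vanishing modulo $p$ via Kummer's theorem, and confirm that the restriction map from the Borel construction of the full product $(\CC^m)^k$ to that of $F(\CC^m,k)$ does not kill it. Everything else (setting up the equivariant map, passing to Borel constructions) is standard; the arithmetic input $\alpha_p(k)$ enters only at this final characteristic-class stage.
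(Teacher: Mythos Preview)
The paper does not prove this theorem at all: Theorem~\ref{thm_lower_bound} is quoted from \cite{blagojevic_cohen_luck_ziegler_On_highly_regular_embeddings_2} as a known lower bound, used only for comparison with the upper bounds constructed in the paper (see Table~\ref{tab_complex_dimensions} and the surrounding discussion). There is therefore no ``paper's own proof'' to compare your proposal against.

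That said, your sketch is broadly in the spirit of the approach actually taken in the cited reference: one reformulates $k$-regularity as the existence of an $S_k$-equivariant map from the ordered configuration space to a Stiefel manifold, restricts to a suitable elementary abelian $p$-subgroup, and obstructs the map via equivariant characteristic classes in $\FF_p$-cohomology, with the $p$-adic digit sum $\alpha_p(k)$ entering through Lucas/Kummer-type divisibility of the relevant coefficients. Your outline is, however, only a plan and not a proof: the passages about the target bundle $\xi_{\mathrm{tgt}}$ and the comparison of top Chern classes are imprecise (the actual obstruction in \cite{blagojevic_cohen_luck_ziegler_On_highly_regular_embeddings_2} is formulated via the dual Fadell--Husseini index and uses specific non-vanishing results for the equivariant cohomology of configuration spaces), and your description of the group action in case~(ii) conflates the roles of $m$ and $k$. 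If you intend to reproduce the argument, you would need to carry out the index computation carefully; but for the purposes of the present paper none of this is required, since the result is simply invoked as a black box.
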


\begin{rem}
  According to \cite{Ziegler} there should not be any $k$-regular map $\CC^m \to \CC^N$ 
    for $N \le m(k-\alpha_2(k))+\alpha_2(k)-1$ for any values of $k$ and $m$ 
    (without the assumption that $m$ is a power of $2$, as in Theorem~\ref{thm_lower_bound}). 
  However, from a private communication with the authors we know that the proof relies on a paper with a mistake.
  Hence, we decided not to refer to that result, 
    even though this bound is highly probable and the proof may soon be corrected and published as an erratum.
  As a consequence, we are not currently aware of any non-trivial lower bound in 
    Table ~\ref{tab_complex_dimensions} that works for general value of $k$ and $m$.
\end{rem}

Table~\ref{tab_complex_dimensions} compares the known lower bounds on $N$ with our results.
Theorems~\ref{thm_main_bound_intro_any_k} and \ref{thm_main_bound_intro_small_k} are proved in
   Subsection~\ref{sect_Veronese_embedding} and Section~\ref{sect_real_case}. In addition, Theorem~\ref{thm_main_bound_intro_small_k} relies on some statements proven in Appendix.
The dimension bounds in these theorems are related to dimensions of certain algebraic parameter spaces,
see~Theorems~\ref{thm_bound_by_hilbert_scheme_intro}, \ref{thm_bound_by_gorenstein_hilbert_scheme_intro}.
For small values of $k$, $m$, we can give explicit examples of such maps:

\begin{exm}\label{ex_3_regular_map}
   Consider the map $\CC^m\to \CC^{2m+1}$ given by
   \[
     (\fromto{t_1}{t_m}) \mapsto (1, t_1, t_1^2, t_2, t_2^2,\dotsc, t_m, t_m^2).
   \]
   This map is $3$-regular.
\end{exm}

Looking at Examples~\ref{exam_vandermonde} and \ref{ex_3_regular_map}, one may wonder if the map given by $i$-th powers of all variables for $i\in\setfromto{0,1}{k-1}$ is always $k$-regular.
\begin{exm}
   The map $\CC^2\to \CC^{7}$ given by
   \[
     (\fromto{s}{t}) \mapsto (1, s, t, s^2, t^2, s^3, t^3)
   \]
   is \textbf{not} $4$-regular. 
   The images of the points $(s_1, t_1), (s_1, t_2), (s_2, t_1), (s_2, t_2)$ are linearly dependent. 
\end{exm}
Instead, using a non-monomial map, one obtains:

\begin{exm}
   Consider the map $\CC^2\to \CC^{7}$ given by
   \[
      (s,t) \mapsto (1, s, t, s^2, st, t^2 - s^3, t^3).
   \]
   This map is $4$-regular.
\end{exm}

\begin{exm}
   Consider the map $\CC^3\to \CC^{10}$ given by
   \[
      (s,t,u) \mapsto (1, t,s,u, st, su, s^2-tu, t^2 -s^3, u^2 - t^3, u^3).
   \]
 This map is $4$-regular after restricting to a small open disc around $0 \in \CC^3$.
    Since the disc is homeomorphic with $\CC^3$, we obtain a continuous $4$-regular map $\CC^3\to \CC^{10}$.
\end{exm}
%

\begin{exm}
Consider the map $f_1 \colon \CC^2\to \CC^{9}$ given by
      \[
         (s,t) \mapsto (1, s, t, s^2, st, t^2, t^3, s^3-t^4, s^4).
      \]
   This map is $5$-regular.
\end{exm}

The following conjecture is motivated by the algebro-geometric approach we develop in this article.
\begin{con}
   There exists a continuous $k$-regular map $\CC^m\to \CC^N$ if and only if 
   \[
     {N \ge m(k-1) +1}.
   \]
\end{con}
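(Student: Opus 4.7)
The conjecture combines a lower bound and an upper bound on the minimal dimension $N$ admitting a continuous $k$-regular map $\CC^m \to \CC^N$. The two inequalities require quite different techniques, and I would attack them separately, using topology for the lower bound and the algebro-geometric machinery of this paper for the upper bound.

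For the upper bound (existence of a $k$-regular map into $\CC^{m(k-1)+1}$), the plan is to sharpen the construction used in Theorems \ref{thm_main_bound_intro_any_k} and \ref{thm_main_bound_intro_small_k}. Start with a Veronese embedding $\nu_d : \PP^m \hookrightarrow \PP^M$ of sufficiently large degree $d$, so that the restriction to an affine chart is automatically $k$-regular, and search for a linear subspace $H \subset \PP^M$ of projective codimension $m(k-1)+2$ such that the central projection $\pi_H$ preserves $k$-regularity. By Lemma \ref{lem_projection_from_H_disjoint_with_secant} one needs $H$ to miss every $(k-1)$-plane spanned by the images of $k$ distinct points of $\CC^m$. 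The existence of such an $H$ depends on the dimension of the bad locus in the Grassmannian of codimension-$(m(k-1)+2)$ subspaces; passing to limits, this bad locus is governed by the Gorenstein component of the punctual Hilbert scheme $\HilbGor^k(\mathbb{A}^m)$, via the correspondence made explicit in Theorem \ref{thm_bound_by_gorenstein_hilbert_scheme_intro}. The conjecture then reduces to a sharp, uniform-in-$k,m$ dimension estimate on the smoothable Gorenstein locus.

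For the lower bound (non-existence when $N \le m(k-1)$), the plan is to extend Theorem \ref{thm_lower_bound} beyond the case where $k$ is prime and $m$ a prime power. The Blagojevi\'c--Cohen--L\"uck--Ziegler obstruction comes from the equivariant cohomology of the ordered configuration space $F(\CC^m, k)$ under an elementary abelian $p$-subgroup of $S_k$, and the correction $\alpha_p(k)$ in the published bound reflects the failure of the $p$-subgroup to act freely when $k$ is not a prime power. To reach the conjectured bound, one could try to combine obstructions arising from several primes dividing $k$ simultaneously, or use an integral or chromatic invariant (for instance equivariant connective $K$-theory, or topological modular forms) that does not decompose prime by prime.

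The hardest part on both sides is that the required estimates fail at face value with the currently available tools. On the algebro-geometric side, Iarrobino's examples of non-smoothable or excessive-dimensional Gorenstein components (arising already for small $k$ once $m \ge 3$) show that the smoothable Gorenstein locus can acquire unexpectedly large modality, so the dimension estimate needed for the upper bound cannot be obtained by a naive parameter count --- a refinement via Macaulay's inverse systems and apolarity, or a genuinely non-generic choice of $H$ exploiting the Veronese's symmetries, is likely needed. On the topological side, every presently available obstruction is $p$-local while the conjectured bound is prime-independent, so producing the missing $\alpha_p(k)$ worth of obstruction requires an intrinsically new equivariant invariant. These two obstacles mirror each other and together single out the conjecture as a genuinely difficult open problem.
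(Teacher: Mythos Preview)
The statement you are addressing is labelled a \emph{Conjecture} in the paper, and the paper offers no proof of it; it is presented as an open problem motivated by the partial results (Theorems~\ref{thm_main_bound_intro_any_k}--\ref{thm_bound_by_gorenstein_hilbert_scheme_intro}) and the lower bounds of Blagojevi\'c--Cohen--L\"uck--Ziegler. So there is nothing in the paper to compare your attempt against.

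Your submission is, correspondingly, not a proof but a research outline, and you say as much in your final paragraph. That is an honest assessment of the situation, and your identification of the two bottlenecks is accurate: on the upper-bound side the dimension of $\HilbPGor_i$ genuinely exceeds $(i-1)(m-1)$ for $i\ge 10$ and $m$ large (cf.\ Example~\ref{ref:1551:example}), so the bound in Theorem~\ref{thm_bound_by_gorenstein_hilbert_scheme_intro} alone cannot reach $m(k-1)+1$ without a new idea; on the lower-bound side the known obstructions are $p$-local and do not assemble to the prime-independent bound $m(k-1)+1$. But none of this constitutes a proof, and the speculative remedies you mention (non-generic centres of projection exploiting Veronese symmetries, chromatic or integral equivariant invariants) are just that---speculation, with no indication of why they should succeed. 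In short, the ``gap'' here is the entire argument: the conjecture remains open, the paper does not claim otherwise, and your proposal does not close it.
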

As illustrated in Table~\ref{tab_complex_dimensions}, whenever the lower bound and upper bound on the minimal value of $N$ 
   coincide, both are equal to the conjectured value $m(k-1)+1$. 


\subsection{The Hilbert scheme}
The Hilbert scheme is an algebraic scheme, one of the simplest examples of moduli spaces.
We consider the Hilbert scheme that parametrizes all subschemes of a given projective space of a given length $k$.
It may have a complicated topological (reducible) and algebraic (non-reduced) structure.
However, it is always compact and connected \cite{Hartcon}.
One of its components, called the smoothable component,
  is a compactification of the configuration space parametrizing $k$ (distinct) points in the projective space.
Schemes corresponding to points of the smoothable component are called
\textbf{smoothable}.
Some points of the Hilbert scheme correspond to subschemes $S$ that are not reduced,
   thus are supported at less than $k$ points.
The locus of subschemes $S$ supported at precisely one point forms a closed subscheme
   of the Hilbert scheme called the \textbf{punctual Hilbert scheme}.
The punctual Hilbert scheme and schemes having an additional algebraic property of being Gorenstein
   \cite[Chapter~21]{Eisenbud}, \cite{iarrobino_kanev_book_Gorenstein_algebras} turn out to be of particular importance
   while studying $k$-regular maps.

\begin{thm}\label{thm_bound_by_hilbert_scheme_intro}
   Suppose $k$ and $m$ are positive integers,
      and let $d$ be the dimension of the locus of smoothable schemes in the punctual Hilbert scheme of length
      $k$ subschemes $\CC^m$.
   Then there exist $k$-regular maps $\RR^m \to \RR^{d+k}$ and $\CC^m \to \CC^{d+k}$.
\end{thm}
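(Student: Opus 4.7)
My plan is to use a high-degree Veronese embedding followed by a carefully chosen linear projection, where the projection center is selected using the structure of the smoothable locus of the punctual Hilbert scheme. Concretely, fix $D\ge k-1$ and let $v_D\colon \CC^m\to V_D^*$ be the affine Veronese map sending a point $p$ to the evaluation functional on the $N$-dimensional space $V_D$ of polynomials of degree at most $D$ in $m$ variables (so $N=\binom{m+D}{m}$). By a Vandermonde-type calculation, $v_D$ is already $k$-regular.

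The next step is to compose $v_D$ with a linear surjection $V_D^*\twoheadrightarrow V_D^*/H\cong \CC^{d+k}$, where $H\subset V_D^*$ is a subspace of codimension $d+k$. The composition remains $k$-regular precisely when $H \cap \langle v_D(p_1),\dots,v_D(p_k)\rangle=\{0\}$ for every choice of $k$ distinct points $p_i\in \CC^m$; equivalently, the subspace $W=H^\perp\subset V_D$ of dimension $d+k$ is a $k$-interpolation space, separating the evaluations at any $k$ distinct points. So the task reduces to exhibiting a subspace $H$ of the desired codimension that avoids the union $\sigma_k^{\mathrm{red}}$ of all $k$-secant planes through $k$ distinct Veronese images.

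The key geometric input is that this obstruction set $\sigma_k^{\mathrm{red}}$ is the reduced-configuration part of a larger variety parametrized by the smoothable Hilbert scheme $\Hilb^k_{\mathrm{sm}}(\CC^m)$: any length-$k$ smoothable subscheme $S$ gives a $k$-dimensional span $\langle v_D(S)\rangle\subset V_D^*$, and $\sigma_k^{\mathrm{red}}$ consists of those spans where $S$ is reduced. A straightforward dimension count shows that a generic choice of $H$ fails, since $\dim \overline{\sigma_k^{\mathrm{red}}}=km+k$ generally exceeds $N-\dim H=d+k$. However, the boundary $\overline{\sigma_k^{\mathrm{red}}}\setminus \sigma_k^{\mathrm{red}}$ consists of spans of \emph{non-reduced} smoothable schemes, and the punctual stratum (subschemes supported at a single point) has dimension exactly $m+d$ inside $\Hilb^k_{\mathrm{sm}}(\CC^m)$. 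The plan is to choose $H$ in such a way that its intersection with $\overline{\sigma_k^{\mathrm{red}}}$ lies entirely within this boundary, and this is where the bound $d+k$ becomes optimal: the codimension $m(k-1)-d$ of the punctual stratum inside the smoothable Hilbert scheme provides exactly the necessary slack. The hardest step is the precise incidence-variety analysis on $\mathrm{Gr}(N-d-k,V_D^*)\times \Hilb^k_{\mathrm{sm}}(\CC^m)$ ensuring that such an $H$ exists. Once this is carried out over $\CC$, the real case follows by the same argument since all parameter spaces are defined over $\QQ$.
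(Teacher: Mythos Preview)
Your setup is right up to the point where you try to choose $H$. You correctly observe that a \emph{generic} linear $H$ of codimension $d+k$ will meet the open secant locus $\sigma_k^{\mathrm{red}}$, since $\dim\overline{\sigma_k^{\mathrm{red}}}=km+k>d+k$. Your proposed fix---find a special $H$ whose intersection with $\overline{\sigma_k^{\mathrm{red}}}$ lies entirely in the non-reduced boundary---is where the argument breaks down. The closure $\overline{\sigma_k^{\mathrm{red}}}$ is an irreducible variety, and you are asking a linear subspace to meet it only along a proper closed subset while having positive expected intersection dimension with the whole thing. Nothing in your incidence-variety sketch produces such an $H$; the ``slack'' you cite (the codimension of the punctual stratum in the Hilbert scheme) controls dimensions upstairs on the Hilbert scheme side, but does not translate into a mechanism for forcing the linear-algebraic intersection $H\cap\overline{\sigma_k^{\mathrm{red}}}$ into the boundary. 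As stated, the hardest step is not just hard---there is no reason to believe it is true.

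The paper's argument avoids this entirely by \emph{not} attempting a global algebraic $k$-regular map. Fix a point $p$ and consider only the \emph{areole} $\gota_k(v_D(\CC\PP^m),p)$: the union of spans $\langle v_D(R)\rangle$ over smoothable $R$ supported at $p$. This has dimension at most $d+k-1$, so a generic projective linear $H$ of the right codimension misses it. One then uses properness of the Hilbert--Chow morphism (the partial compactification of the configuration space) and a compactness argument: since $H$ misses the compact image of the fibre over $p^k$, it misses the image over $\Sym^k(\BB)$ for a sufficiently small ball $\BB$ around $p$. This yields a $k$-regular map on $\BB$, and finally one composes with a homeomorphism $\CC^m\simeq\BB$. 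The resulting map is \emph{not algebraic}, which is why your attempt to get a global polynomial map by projecting the Veronese cannot succeed by this dimension count alone. The missing idea in your proposal is precisely this local-then-topological step: trade global algebraicity for the freedom to shrink the domain.
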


\begin{thm}\label{thm_bound_by_gorenstein_hilbert_scheme_intro}
   Suppose $m>0$ and $k \ge 2$ are two integers.
   Let $d_i$ be the dimension of the locus of Gorenstein schemes in the punctual Hilbert scheme of length
     $i$ subschemes of $\CC^m$.
   Then there exist $k$-regular maps $\RR^m \to \RR^N$ and $\CC^m \to \CC^N$,
      where $N = \max\set{d_i + i \mid 2\le i \le k}$.
\end{thm}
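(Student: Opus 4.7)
The plan is to follow the strategy outlined in the introduction and already used for Theorem~\ref{thm_bound_by_hilbert_scheme_intro}: start with a Veronese map $v_d \colon \CC^m \to \CC^M$ of sufficiently large degree $d$, which is known to be $k$-regular, and then seek a linear projection $\pi_H \colon \CC^M \to \CC^N$ (with kernel $H$) such that $\pi_H \circ v_d$ is still $k$-regular. The projected map is $k$-regular exactly when $H$ meets each $k$-dimensional span $\langle v_d(p_1), \dotsc, v_d(p_k)\rangle$ only at the origin; by closedness in the Hilbert scheme and continuity of the span, this condition extends to $H \cap \langle v_d(Z)\rangle = \{0\}$ for every length-$k$ smoothable subscheme $Z \subset \CC^m$, exactly as in Theorem~\ref{thm_bound_by_hilbert_scheme_intro}.

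The new ingredient, which upgrades Theorem~\ref{thm_bound_by_hilbert_scheme_intro} to the Gorenstein formulation stated here, is an apolarity/cactus reduction: I would show that any vector of $\langle v_d(Z)\rangle$ for a length-$k$ smoothable $Z$ already belongs to $\langle v_d(Z')\rangle$ for some Gorenstein subscheme $Z' \subset \CC^m$ of length $i$ with $2 \le i \le k$. This is proved by interpreting a point of a Veronese span as a polynomial via Macaulay's inverse system and observing that its apolar algebra is Gorenstein of length at most the length of any other apolar scheme; the ``minimal'' such $Z'$ is therefore Gorenstein of some length $i \le k$. Consequently, the bad locus that $H$ must avoid reduces to
\[
\Sigma \;=\; \bigcup_{i=2}^{k}\;\bigcup_{Z\text{ Gorenstein of length }i} \langle v_d(Z)\rangle.
\]

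The final step is a dimension count: the piece $\Sigma_i$ indexed by $i$ has dimension at most $d_i + i$, with $d_i$ coming from the punctual Gorenstein Hilbert scheme and $+i$ from the $i$-dimensional span, while the support-translation factor is absorbed by the equivariance of $v_d$ under the action of $\CC^m$ by translations on both source and target (this is the same bookkeeping used in Theorem~\ref{thm_bound_by_hilbert_scheme_intro}). Taking $N = \max\{d_i + i : 2 \le i \le k\}$, a generic linear subspace $H$ of codimension $N$ in $\CC^M$ meets $\Sigma$ only at $0$, and $\pi_H \circ v_d \colon \CC^m \to \CC^N$ is the desired $k$-regular map. The real case proceeds identically, since all objects are defined over $\RR$ and genericity of $H$ can be realized over $\RR$. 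The main obstacle in carrying out this plan is making the apolarity reduction to Gorenstein schemes of possibly smaller length fully rigorous, together with the translation bookkeeping; once that is done, the remaining generic-position argument is routine.
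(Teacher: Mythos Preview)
Your Gorenstein reduction is exactly the right extra ingredient, and the paper uses the same fact (from \cite{nisiabu_jabu_cactus}) that every point of $\langle R\rangle$ lies in $\langle Q\rangle$ for some Gorenstein $Q\subset R$. The gap is in the dimension count. Your $\Sigma_i$ ranges over \emph{all} Gorenstein length-$i$ subschemes of $\CC^m$: this already includes $i$-tuples of distinct points, whose spans sweep out the open $i$-th secant variety of dimension $mi+i-1$, so $\dim\Sigma_i$ is certainly not bounded by $d_i+i$. Translation equivariance does not help: it tells you $\Sigma_i$ is invariant under the linear $\CC^m$-action on the target, but a linear $H$ with $H\cap\Sigma_i=\{0\}$ still requires $\codim H\ge\dim\Sigma_i$, not $\dim\Sigma_i-m$; invariance of the bad locus does not lower the codimension needed to miss it. (Incidentally, the paper's proof of Theorem~\ref{thm_bound_by_hilbert_scheme_intro} does \emph{not} use a translation argument either, so the parenthetical appeal to it is misplaced.)

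The paper circumvents this by never attempting a global projection. One fixes a point $p$ and considers only the \emph{areole} $\gota_k(v_d(\CC\PP^m),p)$, the union of spans of smoothable schemes supported entirely at $p$. The Gorenstein reduction then stays inside the punctual Hilbert scheme (a subscheme of a scheme supported at $p$ is again supported at $p$), yielding $\dim\gota_k\le\max_i(d_i+i-1)$. A general $H$ of the corresponding codimension avoids the areole, and a compactness argument (Lemma~\ref{lem_projection_from_H_disjoint_with_areole}, using properness of the Hilbert--Chow morphism) shows that the projection is $k$-regular on a small ball around $p$; finally one transports via a homeomorphism $\BB_{\FF}^m\simeq\FF^m$. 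This shrink-to-a-ball step is precisely what lets one trade the full secant/cactus dimension for the punctual dimension $d_i$; without it, a global projection argument can only recover the trivial bound coming from $\dim\sigma_k$.
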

The latter bound is most relevant, whenever $k < k_0$  where $k_0 \in \ZZ \cup \{\infty\}$
  is the length of the shortest non-smoothable Gorenstein scheme in $\CC^m$.
Explicitly, $k_0 = \infty$ for $m = 1$, $2$, or $3$ (i.e.~all finite Gorenstein schemes in $\CC^m$ are smoothable for $m\le 3$),  $k_0 =14$ for $m \ge 6$, and $k_0$ is in between for $m = 4$, or $5$:
For $m=4$ it is known that $ 15 \le k_0 \le 140$ and for $m=5$ we have  $ 15 \le k_0 \le 42$.
In general, we can provide bounds on the required dimensions, and for small values of $k$, we can calculate them explicitly.
Both theorems are proven in Subsection~\ref{sect_Veronese_embedding} and Section~\ref{sect_real_case}.

Although we use methods of algebraic geometry, the maps we obtain in the end are not necessarily algebraic.
That is, under the assumptions of one of the theorems we construct a polynomial map from a small
  $m$-dimensional ball to $\RR^N$,
  or from a small $2m$ dimensional ball to $\CC^N$.
Then we use a (non-polynomial) homeomorphisms of the ball with $\RR^m$ or $\CC^m$.

The properties of punctual Hilbert scheme and the locus of Gorenstein
subschemes are intensively studied
   \cite{iarrobino_punctual_Hilbert_schemes, Briancon_surface, Granger} in algebraic geometry.
Thus our main theorem relates a very classical problem from topology with properties of a well-studied object in
  algebraic geometry, providing non-trivial estimates.
In order to prove our results, we need to analyse properties of the punctual Hilbert scheme.
We try to avoid the most technical methods at the price of proving Theorem~\ref{thm_main_bound_intro_small_k} only with $k\le 9$. 
With more lengthy and sophisticated methods one should be able to improve this bound to slightly higher values of $k$.
See Appendix for more details.

\subsection{Ideal of diagonal}
Although our principal motivations are drawn from topology,
   the following algebraic implications were pointed out to us by Bernd Sturmfels.
Suppose we have an algebraic map $\CC^m\rightarrow \CC^N$ given by $N$ polynomials $f_1,\dots,f_N$ in $m$ variables.
By taking the Cartesian product we obtain a map $(\CC^m)^k\rightarrow (\CC^N)^k$.
This defines a $k\times N$ matrix:
$$
M=
\left[
\begin{array}{cccc}
f_1(x^1)&f_2(x^1)&\dots&f_N(x^1)\\
f_1(x^2)&f_2(x^2)&\dots&f_N(x^2)\\
\vdots&\vdots&\vdots&\vdots\\
f_1(x^k)&f_2(x^k)&\dots&f_N(x^k)\\
\end{array}\right],
$$
where $x^i$ denotes the collection of $m$ distinct variables $x^i_1,\dots,x^i_m$.
Let $I$ be the ideal in the polynomial ring in variables $\fromto{x^1_1}{x^k_m}$,
   generated by $k\times k$ minors of $M$.
It is obvious that polynomials in $I$ always vanish on the big diagonal $D\subset (\FF^n)^k$
   (i.e.~the set of tuples where at least two points coincide),
   as for points in $D$ two rows of $M$ are equal.
The fact that the map $(f_1,\dots,f_N)$ is $k$-regular is equivalent to the fact that $I$ defines, as a set, the big diagonal.
In other words, by Hilbert's Nullstellensatz over an algebraically closed field, $(f_1,\dots,f_N)$ is $k$-regular if and only if $\rad I=I(D)$. The ideal of the big diagonal $I(D)$ is of interest in algebra cf.~\cite[Chapter 18.3]{miller}. Examples of $k$-regular maps can provide insight into equations defining $D$.

\subsection{Overview}

In Section~\ref{sect_notation} we introduce the notation and compare languages of topology and algebraic geometry.
In Section~\ref{sect_configuration_space} we describe the configuration space, its significance to $k$-regularity problem and its partial compactification.
The main character in Section~\ref{sect_Hilbert_scheme} is the Hilbert scheme of points: the algebraic analogue of a compactification of the configuration space.
In Section~\ref{sect_secants} we describe secant varieties and their variants, and we explain their role in the complex  versions  of the proofs of main results.
Finally, Section~\ref{sect_real_case} concludes with extending the results to the real case, and also to some other manifolds than $\RR^m$, $\CC^m$. 
In the Appendix we explain the technical part, namely the calculation of the dimensions of various loci of the Hilbert scheme.

\subsection*{Acknowledgements}

We are grateful to Pavle Blagojevi\'c, Anthony Iarrobino, and Bernd Sturmfels
  for helpful comments, interesting discussions, and suggesting references.
Buczy\'nski and Micha{\l}ek would like to thank Simons Institute for the Theory of Computing and the organisers of the thematic semester ``Algorithms and Complexity in Algebraic Geometry'' for providing an excellent environment for collaboration during the final phase of writing the article.
The article is written as a part of "Computational complexity, generalised Waring type problems and tensor decompositions",
a project within "Canaletto",
    the executive program for scientific and technological cooperation between Italy and Poland, 2013-2015.
The paper is also a part of the activities of AGATES research group.

\section{Notation}\label{sect_notation}

\subsection{Base field and the clash of cultures: topology vs algebraic geometry}

The article addresses a problem in topology via methods of algebraic geometry.
It is thus desirable that the content is accessible to experts in one of these fields of mathematics,
  who do not necessarily specialise in the other field.
However, there is some clash of notation and we try to explain it here.
As a general rule, we prefer the topological notation, which should be mostly known to both tribes.

Throughout the paper we work over the \textbf{base field} $\FF$, which is either the field of complex numbers $\CC$
   or the field of real numbers $\RR$.
The careful algebro-geometric reader will certainly notice that some of the lemmas can be generalised
   to the set-up of any other base-field perhaps with a little extra work.
To maintain the accessibility we restrict to $\RR$ or $\CC$.

For an integer $n$ by $\FF^n$ we mean the real or complex affine space in the sense of linear algebra, or analysis,
   but not the algebro-geometric scheme $\Spec \FF[\alpha_1,\dotsc, \alpha_n]$.
On $\FF^n$ we will mainly consider the standard Euclidean topology, that is the topology generated by open balls.
In particular, unless otherwise stated, continuous maps and open subsets are with respect to Euclidean topology.
We will also consider the Zariski topology, which consists of the complements of affine varieties,
  i.e.~of subsets defined by vanishing of a collection of polynomials in $\FF[\alpha_1,\dotsc, \alpha_n]$.
All uses of Zariski topology will have a clear reference, for instance, ``Zariski-open subset''.

Analogously, $\FF\PP^n$ is the \textbf{real or complex projective space}, and we follow analogous conventions.
More generally, an $\FF$-\textbf{manifold} (or, simply, a \textbf{manifold}, if the reference to the field is clear from the context) denotes a real $\ccC^{\infty}$-manifold (if $\FF=\RR$) or a complex holomorphic manifold (if $\FF=\CC$).
Typical cases of manifold $M$ we will consider are either smooth algebraic varieties over $\FF$,
  or (Euclidean) open subsets of such.
Even more specifically, $M$ will be mainly $\FF^n$, $\FF\PP^n$, or $\BB_{\FF}^n$,
   the $n$-dimensional \textbf{open ball} in $\FF^n$.
Note that here and throughout we always count the \textbf{dimension} with respect to the base field $\FF$.
That is, for instance, $\dim \BB_{\CC}^n=n$, since the ball has complex dimension $n$,
   even though the real dimension is $2n$.

Throughout the paper, the notion of \textbf{regularity} will refer to the $k$-regularity,
  as explained in the introduction and in Section~\ref{sect_independence}.
However, in algebraic geometry \emph{regular map} (or \emph{regular morphism})
  means a polynomial map defined everywhere,
  as opposed to a \textbf{rational map}, which is well defined only on some Zariski-open subset, or in other words,
   it is  allowed to have ``poles''.
Rational maps are denoted by $\dashrightarrow$, rather than the usual $\to$.
Also \emph{Castelnouvo-Mumford regularity} is an integer associated to a sheaf on a projective space,
  and various derivative notions also use the word ``regular''.
Here we will not use the Castelnouvo-Mumford regularity explicitly,
  and in order to avoid the confusion we will say \textbf{algebraic morphism}
  to mean a polynomial map between two algebraic varieties,
  which is defined everywhere.

\subsection{Notions of independence and regularity}\label{sect_independence}

Many different notions of independence can be considered: linear, affine, projective\dots
Each of them leads to a slightly different notion of $k$-regularity.
Although they are essentially equivalent, up to small modifications, all of them are used and applied,
so we include a brief discussion of these notions.
Note that for affinely regular maps, there is a shift in notation in \cite[Definition 2.4]{Ziegler}.

\dfi\label{def_reg}
Let $M$ be an $\FF$-manifold.
\begin{itemize}
 \item A continuous map $f\colon M \to \FF^N$ is \textbf{$k$-regular} (or \textbf{linearly $k$-regular}),
         if the image of any $k$ distinct points in $M$ is linearly independent,
         that is, linearly spans a $k$-dimensional linear space.
 \item A continuous map $f\colon M \to \FF^N$ is \textbf{affinely $k$-regular},
         if the image of any $k$ distinct points in $M$ is affinely independent,
         that is, affinely spans a $(k-1)$-dimensional affine space.
 \item A continuous map $f\colon M \to \FF\PP^N$ is \textbf{projectively $k$-regular},
         if the image of any $k$ distinct points in $M$ is projectively
         independent,
         that is, projectively spans a $(k-1)$-dimensional projective space.
\end{itemize}
\kdfi

\begin{exm}
   By analogy to Example~\ref{ex_3_regular_map} we illustrate the differences between the above types of regularity:
   \begin{itemize}
    \item The map $f \colon \FF^2 \to \FF^5$ given by
         $(t_1, t_2) \mapsto (1, t_1, t_1^2, t_2, t_2^2)$ is linearly $k$-regular.
    \item The map $f \colon \FF^2 \to \FF^4$ given by
         $(t_1, t_2) \mapsto (t_1, t_1^2, t_2, t_2^2)$ is (affinely) $k$-regular.
    \item The map $f \colon \FF^2 \to \FF\PP^4$ given by
         $(t_1, t_2) \mapsto [1, t_1, t_1^2, t_2, t_2^2]$ is projectively $k$-regular.
   \end{itemize}
\end{exm}

Essentially, we can go back and forth between these different types of regularities.

\begin{lema}\label{lem_going_back_and_forth_between_regularities}
   The following relations between regularities hold.
   \begin{enumerate}
    \item    Suppose $f\colon M \to \FF^N$ is affinely $k$-regular map.
             Consider the standard open embedding $\FF^N\hookrightarrow \FF\PP^N$ into the projective space.
             Then the composition $f'\colon M  \to \FF\PP^N$ is a projectively $k$-regular map.
    \item    Suppose $f\colon M \to \FF^N$ is linearly $k$-regular map. Then $0$ is not in the image of $f$.
             Consider the standard projectivisation map $\FF^N \setminus \set{0} \to  \FF\PP^{N-1}$
             onto the projective space.
             Then the composition $f' \colon M  \to \FF\PP ^{N-1}$ is a projectively $k$-regular map.
    \item    Suppose $f\colon M \to \FF\PP^N$ is projectively $k$-regular map.
             If, in addition, the image of $M$ avoids a hyperplane $\FF\PP^{N-1} \subset \FF\PP^N$,
                then $f'\colon M \to \FF^N = \FF\PP^N \setminus \FF\PP^{N-1}$ is affinely $k$-regular.
    \item    Suppose $f\colon M \to \FF^N$ is affinely $k$-regular map.
             Then the map $f'\colon M \to \FF^{N+1} = \FF \oplus \FF^N$ given by $f'(m) = (1,f(m))$ is linearly $k$-regular.
   \end{enumerate}
\end{lema}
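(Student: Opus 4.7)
The plan is to reduce all four implications to a single linear-algebra fact: for points $q_1,\dots,q_k\in\FF^N$, affine independence is equivalent to linear independence of the augmented vectors $(1,q_1),\dots,(1,q_k)\in\FF^{N+1}$, which in turn is equivalent to projective independence of $[1\!:\!q_1],\dots,[1\!:\!q_k]\in\FF\PP^N$. This one observation is the engine that drives everything; once it is stated and verified, all four parts reduce to unwinding the relevant definitions.

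First I would handle (iv) directly from this equivalence: since $f$ is affinely $k$-regular, the images $f(p_1),\dots,f(p_k)$ are affinely independent for any $k$ distinct $p_i$, so $(1,f(p_1)),\dots,(1,f(p_k))=f'(p_1),\dots,f'(p_k)$ are linearly independent. Part (i) is the same statement, phrased projectively: the composition sends $m$ to $[1\!:\!f(m)]$, and projective independence of these $k$ points is, by definition, linear independence of the representatives $(1,f(m_i))$, which is again the same fact. Part (iii) is the inverse of (i): after choosing homogeneous coordinates in which the avoided hyperplane is $\{x_0=0\}$, each $f(m)$ has a unique representative of the form $(1,q(m))$, and projective independence of the $[1\!:\!q(m_i)]$ is equivalent to affine independence of the $q(m_i)$, which are the values of $f'$.

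For (ii) there are two small things to check before invoking the same principle. I would first observe that $0\notin f(M)$: if $f(p)=0$ for some $p$, then taking any other $k-1$ distinct points $p_2,\dots,p_k$ (using $\dim M\ge 1$, or the trivial cases $k=1$ with the convention that a single point spans a $1$-dimensional line, hence is nonzero) one gets $0,f(p_2),\dots,f(p_k)$ linearly dependent, contradicting linear $k$-regularity. Hence the projectivisation $f'$ is defined. Next, I would verify that $f'$ is injective on any $k$-tuple of distinct points: if $f'(p_i)=f'(p_j)$ then $f(p_i)=\lambda f(p_j)$ for some $\lambda\in\FF^\times$, which would make $\{f(p_i),f(p_j)\}$ linearly dependent, again contradicting linear $k$-regularity (assuming $k\ge 2$; the case $k=1$ is vacuous). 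Finally, linear independence of $f(p_1),\dots,f(p_k)$ implies projective independence of their classes, so $f'$ is projectively $k$-regular.

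None of these steps is a real obstacle; the only place where one has to pause is the edge cases in (ii) (the nonvanishing of $f$ and the injectivity on $k$-tuples), which are trivial but require the hypothesis $k\ge 1$ respectively $k\ge 2$ to be used explicitly. The rest is bookkeeping with the dictionary between affine, linear, and projective independence.
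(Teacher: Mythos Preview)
Your proposal is correct. The paper does not actually supply a proof of this lemma; it is stated as an elementary observation and immediately used thereafter, so there is nothing to compare against. Your argument---reducing all four items to the standard equivalence between affine independence of $q_1,\dots,q_k$, linear independence of $(1,q_1),\dots,(1,q_k)$, and projective independence of $[1\!:\!q_i]$---is exactly the intended (and essentially only) route. The small extra care you take in (ii), checking that $0$ is not in the image and that distinct points have distinct projectivisations, is appropriate; note that the injectivity check is in fact subsumed by linear independence (two proportional nonzero vectors are already linearly dependent), so you could omit it, but including it does no harm.
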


Topologist are principally interested in the $k$-regular embeddings $M \hookrightarrow \RR^N$.
In algebraic geometry, the analogue of compact manifold is non-singular projective algebraic variety.
However, there is no non-constant algebraic morphism $M \to \FF^N$,
  if $M$ is a smooth projective algebraic variety.
Thus it is more natural to study the algebraic morphisms into projective space $\FF\PP^N$
  and ask, whether they are projectively $k$-regular.

On the other extreme, we may work locally.
If there exists a $k$-regular continuous map defined on an open subset $U\subset \FF^m$,
  then there exists also a $k$-regular map defined on the whole $\FF^m$.
This follows from the fact that an open ball in $\FF^m$ is homeomorphic (or diffeomorphic) to $\FF^m$.
Again, in the algebraic context this is not true.
There is a big difference between algebraic morphisms, i.e.~maps defined everywhere,
  and rational maps, i.e.~maps defined on a Zariski open subset.

\section{Configuration space in topology}\label{sect_configuration_space}

In this section we discuss the topological approach to $k$-regularity.
It involves the studies of configuration space of points on a manifold.
Next we illustrate, how sufficiently nice partial compactification of the configuration space
   may lead to interesting $k$-regular embeddings.
This construction contains the main idea of article.
In the following section we show how Hilbert scheme,
  an object from algebraic geometry, can be used to construct the partial compactification.

\subsection{Configuration space and \texorpdfstring{$k$}{k}-regularity}\label{sect_conf_space_and_k_regulatity}

Suppose $M$ is an $\FF$-manifold. The \textbf{(unordered) configuration space} $C_k(M)$
   is the set of unordered $k$-tuples of distinct points on $M$.
More precisely, it is the following manifold:
\begin{equation}\label{equ_define_configuration_space}
   C_k(M) = (\underbrace{M \times M \times \dotsb \times M}_{\text{$k$ copies}} \setminus \Delta) / \Sigma_k
\end{equation}
Here $\Delta$ is the big diagonal, i.e.~the set of all $k$-tuples of points in $M$, for which at least two points coincide,
  and $\Sigma_k$ is the permutation group on $k$ elements acting on the product by permuting factors.

Suppose $f\colon M \to \FF^N$ is a (linearly) $k$-regular map.
The $k$-regularity induces a map to the Stiefel manifold of $k$-frames $St(k, \FF^N)$, and hence to Grassmannian $Gr(k, \FF^N)$
   (i.e.~the parameter space for linear $k$-dimensional subspaces of $\FF^N$):
   \begin{alignat*}{2}
              \xi\colon C_k(M) & \to St(k, \FF^N) &&\to Gr(k, \FF^N)\\
              (\fromto{p_1}{p_k}) & \mapsto \left(\fromto{f(p_1)}{f(p_k)}\right)&&\mapsto \left\langle \fromto{f(p_1)}{f(p_k)} \right\rangle,
   \end{alignat*}
where $ \langle \dotsc \rangle$ denotes the linear span in $\FF^N$.
Thus there is the induced vector bundle $\ccE$ on $C_k(M)$,
  which is the pull-back of the universal subbundle on the Grassmannian.
Explicitly, $\ccE$ is the incidence set $\ccE \subset C_k(M) \times \FF^N$:
\[
   \ccE = \set{(\fromto{p_1}{p_k}), v
                         \mid  (\fromto{p_1}{p_k}) \in C_k(M), v \in \langle \fromto{f(p_1)}{f(p_k)} \rangle \subset \FF^N}.
\]
It admits natural projections into $\FF^N$ and onto $C_k(M)$.
We let the \textbf{open secant variety $\sigma_k^{\circ}(f(M))$} be the image of $\ccE$ in $\FF^N$.

Analogously, if $f$ is an affinely (or projectively) $k$-regular map, we can define the map
   $\xi \colon C_k(M) \to Gr^{aff}(k-1, \FF^N)$ (or $\xi \colon C_k(M) \to Gr(\FF\PP^{k-1}, \FF\PP^N)$),
   the affine space bundle (or projective space bundle) $\ccE$,
   and the affine or projective analogue of the open secant variety $\sigma_k^{\circ}(f(M))$.
We only state Lemma~\ref{lem_projection_from_H_disjoint_with_secant}
   for the affinely regular case.
The reader will easily generalise the lemma to the other cases.

\begin{lema}\label{lem_projection_from_H_disjoint_with_secant}
  Suppose $f\colon M \to \FF^N$ is an affinely $k$-regular map
     and $H \subset \FF^N$  is an affine subspace which is disjoint from $\sigma_k^{\circ}(f(M))$.
  Then there exists an affinely $k$-regular map $f'\colon M \to \FF^{N-\dim H-1}$.
\end{lema}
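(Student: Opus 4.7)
The plan is to construct $f'$ as the affine projection of $f$ away from $H$. Let $\pi_H \colon \FF^N \setminus H \to \FF^{N-\dim H - 1}$ be the affine projection whose fibers through $x \notin H$ are the affine spans $\langle H, x\rangle$ with $H$ removed. First I would observe that $f(M) \subseteq \sigma_k^\circ(f(M))$: for any $p \in M$ and any choice of $k-1$ other distinct points in $M$, the image $f(p)$ is a vertex of the resulting $(k-1)$-dimensional affine span and therefore lies in the open secant variety. Since $H$ is disjoint from $\sigma_k^\circ(f(M))$ by hypothesis, the composition $f' := \pi_H \circ f \colon M \to \FF^{N - \dim H - 1}$ is a continuous, well-defined map.

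The substantive step is to verify that $f'$ is affinely $k$-regular. Fix $k$ distinct points $p_1, \dots, p_k \in M$ and let $L = \langle f(p_1), \dots, f(p_k) \rangle$, which is $(k-1)$-dimensional by the affine $k$-regularity of $f$. Since $L$ is the image in $\FF^N$ of the fiber of $\ccE \to C_k(M)$ over $(p_1, \dots, p_k)$, we have $L \subseteq \sigma_k^\circ(f(M))$, and hence $L \cap H = \emptyset$. The goal is to show that $\pi_H|_L$ is an affine isomorphism onto its image; granted this, the points $\pi_H(f(p_1)), \dots, \pi_H(f(p_k))$ affinely span the $(k-1)$-dimensional affine subspace $\pi_H(L)$ and are hence affinely independent, establishing the $k$-regularity of $f'$.

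The one subtle point, which I expect to be the main obstacle, is the claim that $\pi_H|_L$ preserves affine dimension. If $L_0$ and $H_0$ denote the direction subspaces of $L$ and $H$, then $\dim \pi_H(L) = \dim L - \dim(L_0 \cap H_0)$, so what is really needed is that the directions of $L$ and $H$ intersect only at $0$. Mere set-theoretic disjointness $L \cap H = \emptyset$ does not give this in general, as it would fail when $L$ is parallel to $H$. The cleanest way to handle this is to interpret the hypothesis projectively: embedding $\FF^N \hookrightarrow \FF\PP^N$, the subspace $H$ extends to a projective subspace $\bar H$ and $\sigma_k^\circ(f(M))$ has a topological closure $\bar\sigma_k^\circ$, and requiring $\bar H \cap \bar\sigma_k^\circ = \emptyset$ simultaneously gives $L \cap H = \emptyset$ and $L_0 \cap H_0 = \{0\}$. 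In the paper's applications one chooses $H$ generically, so this transversality at infinity is automatic and the above projection argument goes through verbatim.
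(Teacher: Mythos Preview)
Your approach is exactly the paper's: set $f' = \pi \circ f$ where $\pi$ is the projection with centre $H$ onto a complementary affine $\FF^{N-\dim H-1}$, observe that each affine $(k-1)$-span $L = \langle f(p_1),\dots,f(p_k)\rangle$ lies in $\sigma_k^{\circ}(f(M))$ and hence misses $H$, and conclude that $\pi$ preserves $\dim L$. The paper's proof is in fact shorter than yours: after noting $L\cap H=\emptyset$ it simply writes ``Thus the image $\pi(\FF^{k-1})$ has dimension $k-1$,'' without further comment.

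The subtlety you flag is real and is not addressed in the paper's argument either. Affine disjointness $L\cap H=\emptyset$ does not by itself prevent $L_0\cap H_0\neq 0$, and in that parallel situation the projection genuinely drops dimension; one can even manufacture degenerate examples where the conclusion of the lemma fails as literally stated. Your proposed remedy---pass to $\FF\PP^N$ and require that the projective closure $\bar H$ miss the closures of all secant $(k-1)$-planes---is exactly the right way to make the step rigorous, and, as you observe, this stronger disjointness is automatic in every use of the lemma later in the paper (generic choice of $H$, or $H$ chosen by a dimension count against a projective locus). So your proof is essentially the paper's argument, but more scrupulous: you prove the version of the statement that is actually needed and actually applied, while being explicit that the lemma exactly as worded relies on a tacit transversality-at-infinity assumption.
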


\begin{proof}
  Pick any affine subspace $\FF^{N-\dim H -1} \subset \FF^N$ disjoint from $H$.
  Let $\pi\colon \FF^N \to \FF^{N-\dim H-1}$ be the projection with centre $H$,
    and set $f' = \pi \circ f$ to be the composition.
  For any $k$ points $(\fromto{p_1}{p_k})$ on $M$, their image under $f$ spans an affine space
    $\FF^{k-1} \subset \sigma_k^{\circ}(f(M))$, which is disjoint from $H$.
  Thus the image $\pi (\FF^{k-1})$ has dimension $k-1$ and $f'$ is $k$-regular.
\end{proof}

A standard way to use the lemma is to bound $\dim \sigma_k^{\circ}(f(M)) \le k \dim M + k -1$
  and use the generic transversality.
This argument shows that any manifold of dimension $m$ admits a $k$-regular embedding into
  $k m + k -1$. See also \cite{kreg6}.

In what follows we will find a bit better way to avoid the secant variety,
  at the price of restricting to $M=\BB_{\FF}^m \simeq \FF^m$.

\subsection{The main idea}

We commence with an informal discussion of the main idea.
Suppose we have a $k$-regular map $f \colon\BB_{\FF}^m \to \FF^N$ 
                  (perhaps $N$ is very large in the first place).
We are looking for a criterion that will allow us to reduce $N$.
By Lemma~\ref{lem_projection_from_H_disjoint_with_secant} this is possible, 
  if the union of secants $\langle \fromto{f(p_1)}{f(p_k)}\rangle$  does not fill the entire $\FF^N$.
Pick a point $x \in \FF^N$. 
Suppose on the contrary, that the secants fill the $\FF^N$, in particular, $x \in \langle \fromto{f(p^1_1)}{f(p^1_k)}\rangle$.
Then we may reduce the disc, say by halving the radius.
Now, if $x$ is not in any $k$-secant, we are done, and we can reduce $N$.
So suppose $x \in \langle \fromto{f(p^2_1)}{f(p^2_k)}\rangle$, 
   but now the points $p^2_i$ are much closer to each other. 
We keep halving the radius of the disc.
In the end, either $x$ stops being in a secant, 
or it is contained in the \emph{limit} of the secants (perhaps after taking a subsequence).
Here the limit of points is the centre of the discs.
The principal idea is that there is much less of such limits, than of all secants.

To formalise the above idea we introduce the notion of partial compactification in the following subsection.
   
\subsection{A partial compactification} \label{sect_partial_compactification}

We now switch to the projective set-up, in order to  benefit from compactness.
Given an affinely or linearly $k$-regular map, we
  replace it with a projectively $k$-linear map $f \colon M \to \FF\PP^N$
  using Lemma~\ref{lem_going_back_and_forth_between_regularities}.
By analogy to Subsection~\ref{sect_conf_space_and_k_regulatity} consider the map
  $\xi\colon C_k(M)  \to Gr(\FF\PP^{k-1}, \FF\PP^N)$ into the (compact) Grassmannian,
  and the projective space bundle $\ccE$ on $C_k(M)$,
  which is the pull-back of the universal projective space subbundle on the Grassmannian.
In particular, the bundle map $\ccE \to C_k(M)$
  is \textbf{proper}, that is the preimage of every compact subset is compact.
We also let $\sigma_k^{\circ}(f(M))$ be the image of $\ccE$ in $\FF\PP^N$.

The idea to improve the construction of $k$-regular embeddings into small ambient spaces is following.
Suppose we can find a partial compactification $\overline{C}_k(M)$ of $C_k(M)$ satisfying the following conditions:
\renewcommand{\theenumi}{(\alph{enumi})}
\renewcommand{\labelenumi}{\theenumi}
\begin{enumerate}
  \item \label{item_partial_compactification_density}
        $C_k(M) \subset \overline{C}_k(M)$ and $C_k(M)$ is dense in $\overline{C}_k(M)$.
  \item \label{item_partial_compactification_properness}
        The natural embedding map $C_k(M) \to \Sym^k(M)$ extends
          to a proper map
          \[
             \rho \colon \overline{C}_k(M) \to \Sym^k(M);
          \]
        here $\Sym^k(M) = M^{\times k} / \Sigma_k$ is the symmetric product of $M$.
  \item \label{item_partial_compactification_map_to_Grassmannian}
        The map $\xi\colon C_k(M)  \to Gr(\FF\PP^{k-1}, \FF\PP^N)$ extends to a map
                \[
                  \overline{\xi}\colon \overline{C}_k(M)  \to Gr(\FF\PP^{k-1}, \FF\PP^N).
                \]
\end{enumerate}

Note that $\overline{C}_k(M)$ is compact if and only if $M$ is compact.
Thus $\overline{C}_k(M)$ fills in the ``holes'' in  $C_k(M)$ which are the results of removing the big diagonal $\Delta$
   in \eqref{equ_define_configuration_space}, but does not compactify $M$.
This is why we refer to $\overline{C}_k(M)$ as \textbf{a partial compactification}.

We will construct $\overline{C}_k(M)$ using a Hilbert scheme in
Section~\ref{sect_secants}.
Here we show how the partial compactification is useful to construct affine spaces disjoint from $\sigma_k^{\circ}(f(M))$,
  perhaps after shrinking $M$.

Pick any point $p \in M$ and its small open neighbourhood $\BB_{\FF}^m$ in $M$.
Let $p^k \in \Sym^k (M)$ denote the $k$-tuple consisting of $k$ copies of $p$.
Consider the preimages via $\rho \colon \overline{C}_k(M) \to \Sym^k(M)$
   of $p^k$ and $\Sym^k (\BB_{\FF}^m)$.
Informally, $\Sym^k (\BB_{\FF}^m)$ is a small neighbourhood of $p^k$ which contains only configurations of points with repetitions, that are within at most fixed small distance to $p$. 
   
Note that $\rho^{-1}(\Sym^k (\BB_{\FF}^m))$ is an open neighbourhood of the compact set $\rho^{-1}(p^k)$.
Let $\overline{\ccE}$ be the pull-back of the universal subbundle from the Grassmannian to $\overline{C}_k(M)$.
Further denote $\ccE_{p}$ and $\ccE_{\BB_{\FF}^m}$ to be the restrictions of $\overline{\ccE}$
   to $\rho^{-1}(p^k)$ and $\rho^{-1}(\Sym^k(\BB_{\FF}^m))$, respectively.
Then we can look at the images of $\ccE_{\BB_{\FF}^m}$ and $\ccE_{p}$ in $\FF\PP^N$.
On one hand, the image of $\ccE_{\BB_{\FF}^m}$ contains $\sigma_k^{\circ}(f(\BB_{\FF}^m))$.
On the other hand, in Lemma~\ref{lem_projection_from_H_disjoint_with_areole}
  we observe, that if a linear subspace $H \subset \FF\PP^N$
  avoids the image of $\ccE_{p}$,
  then it also avoids the image of $\ccE_{\BB_{\FF}^m}$, perhaps after further shrinking $\BB_{\FF}^m$.

\begin{lema}\label{lem_projection_from_H_disjoint_with_areole}
   Suppose $f\colon M \to \FF\PP^N$ is $k$-regular, and $\overline{C}_k(M)$ satisfies the conditions
   \ref{item_partial_compactification_density}--\ref{item_partial_compactification_map_to_Grassmannian} above.	
   If a (projective) linear subspace $H$ avoids the image $\gota_p$ of $\ccE_{p}$
     under the map $\overline{\ccE} \to \FF\PP^N$,
     then there exists an affinely $k$-regular map $g \colon \FF^m \to \FF^{N- \dim H -1}$.

\end{lema}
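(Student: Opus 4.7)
The plan is to leverage properness of $\rho$ together with the openness of the ``disjoint from $H$'' condition on the Grassmannian. First, I would observe that the subset $\ccU\subset Gr(\FF\PP^{k-1},\FF\PP^N)$ of projective $(k-1)$-planes disjoint from $H$ is open, since meeting $H$ is a closed rank condition. Because $H\cap \gota_p=\emptyset$, every plane in $\overline{\xi}(\rho^{-1}(p^k))$ lies in $\ccU$, so $\overline{\xi}^{-1}(\ccU)$ is an open subset of $\overline{C}_k(M)$ containing the fibre $\rho^{-1}(p^k)$.

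Next, I would exploit condition \ref{item_partial_compactification_properness}: since $\rho$ is proper, it is a closed map, so the image $\rho\bigl(\overline{C}_k(M)\setminus\overline{\xi}^{-1}(\ccU)\bigr)$ is closed in $\Sym^k(M)$ and avoids $p^k$. The symmetric powers $\Sym^k(\BB)$ of shrinking Euclidean balls $\BB$ around $p$ in $M$ form a neighbourhood basis of $p^k\in \Sym^k(M)$, so I can pick $\BB$ small enough that $\rho^{-1}(\Sym^k(\BB))\subset \overline{\xi}^{-1}(\ccU)$. In particular, for any $k$ distinct points $p_1,\dotsc,p_k\in \BB$, the tuple $(p_1,\dotsc,p_k)$ lies in $C_k(\BB)\subset \rho^{-1}(\Sym^k(\BB))$, so the projective span $\langle f(p_1),\dotsc,f(p_k)\rangle$ is disjoint from $H$.

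With this in hand, projection from $H$ yields a map $\pi\circ f\colon \BB \to \FF\PP^{N-\dim H-1}$ which is projectively $k$-regular: disjointness of the spanned $(k-1)$-plane from $H$ guarantees that $\pi$ restricts to an isomorphism on it, preserving dimension. The map is defined at $p$ itself, since $f(p)\in \gota_p$ (any limiting secant through $f(p)$ belongs to $\ccE_p$) so $f(p)\notin H$. To convert to an affinely $k$-regular map, I would shrink $\BB$ further so that $(\pi\circ f)(\BB)$ avoids some hyperplane (pick one missing $(\pi\circ f)(p)$ and use continuity), then apply Lemma~\ref{lem_going_back_and_forth_between_regularities}(iii). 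Finally, composing with a homeomorphism $\FF^m\cong \BB$ produces the required $g\colon \FF^m\to \FF^{N-\dim H-1}$.

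The main obstacle is the properness step and its topological bookkeeping: one must verify that $\rho$ is a closed map (which follows from properness plus Hausdorffness of the spaces) and that the $\Sym^k(\BB)$ indeed form a neighbourhood basis of $p^k$. Both are essentially standard for manifolds, but the rigorous check will depend on the explicit construction of $\overline{C}_k(M)$ given later via the Hilbert scheme; everything else in the argument is a mechanical application of the three axioms \ref{item_partial_compactification_density}--\ref{item_partial_compactification_map_to_Grassmannian} together with Lemma~\ref{lem_going_back_and_forth_between_regularities}.
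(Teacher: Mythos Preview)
Your argument is correct and follows the same overall strategy as the paper: use properness to shrink the ball so that every $k$-secant of $f(\BB)$ misses $H$, project, then pass to an affine chart and use $\BB\cong\FF^m$. The only cosmetic difference is where the compactness bookkeeping happens: the paper observes that $\gota_p$ is compact (as the image of the compact $\ccE_p$), so $H$ misses a tubular neighbourhood of $\gota_p$ in $\FF\PP^N$, and then argues that the image of $\ccE_{\BB}$ lands in that neighbourhood for small $\BB$; you instead pull the disjointness condition back to an open set $\ccU$ in the Grassmannian and use that $\rho$ is closed. Both are equivalent unpackings of the same properness statement, and your version is arguably more transparent about exactly how condition~\ref{item_partial_compactification_properness} enters.
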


\begin{proof}
    First observe that $\ccE_{p}$ is the preimage of a compact set $\rho^{-1}(p^k)$
  under a proper bundle map $\overline{\ccE} \to \overline{C}_k(M)$, thus it
  is compact. Since  $\gota_p$ is the image of $\ccE_{p}$, it is compact as
  well.

Let $H \subset \FF\PP^N$ be the projective linear subspace avoiding $\gota_p$.
It is also compact, hence avoids an open (tubular) neighbourhood of $\gota_p$.
Hence $H$ avoids the image of $\ccE_{\BB_{\FF}^m}$, perhaps after shrinking the ball $\BB_{\FF}^m$.
Thus $H$ avoids $\sigma_k^{\circ}(f(\BB_{\FF}^m))$ and we are in position to apply the projective version of
  Lemma~\ref{lem_projection_from_H_disjoint_with_secant}.
In particular, there exists a projectively $k$-regular map $f'\colon \BB_{\FF}^m \to \FF\PP^{N-\dim H -1}$.
Perhaps after further shrinking $\BB_{\FF}^m$, we may assume the image of $f'$ avoids a hyperplane
$\FF\PP^{N-\dim H -2} \subset \FF\PP^{N-\dim H -1}$.
Thus by Lemma~\ref{lem_going_back_and_forth_between_regularities},
  there exists an affinely $k$-regular map  $g\colon  \FF^m \simeq \BB_{\FF}^m \to \FF^{N-\dim H -1}$.
\end{proof}

If we can assure $\gota_p$ has some sufficiently good structure, for example it is a manifold, or an algebraic variety,
   then we can again use transversality type arguments to show that there exist $k$-regular maps
   into the space of dimension equal to $\dim \gota_p$.

\section{Hilbert scheme and finite Gorenstein schemes}\label{sect_Hilbert_scheme}

 \subsection{Finite subschemes of \texorpdfstring{$\FF^n$}{Fn}.}
Let us briefly explain what is the intuition behind
a finite subscheme $R$ of an affine space $\FF^n$.
As usual in algebraic geometry, 
   we analyse $R$ by looking at restrictions of functions (polynomials) from
$\FF^n$ to $R$.
If $R = \{p_1, \ldots , p_k\}$ is a set of points then a function $f$ restricted to $R$ is
just the set of values $f(p_1), \ldots , f(p_k)$.  For general $R$, the restriction of $f$ may contain
additional higher-order data of $f$ near $p_i$, e.g.~its partial derivatives
at $p_i$, second-order derivatives at $p_i$ etc.

In fact, finite subschemes of $\FF^n$ are defined using the above
intuition.
Denote by $S$ the ring of algebraic functions  (i.e.~polynomials) on $\FF^n$.
Then the ring of algebraic functions on $R$ is
$S/(\mathfrak{q}_1\cap \mathfrak{q}_2\cap  \ldots \cap \mathfrak{q}_k)$,
  where $\mathfrak{q}_i$ is an ideal of finite colength, whose radical is the maximal ideal of the functions vanishing at point $p_i$.
We say that $\mathfrak{q}_1\cap \mathfrak{q}_2\cap  \ldots \cap \mathfrak{q}_k$
  is the \textbf{ideal of functions vanishing on $R$}.
Such scheme $R$ is denoted
\[
   \Spec S/(\mathfrak{q}_1\cap \mathfrak{q}_2\cap  \ldots \cap \mathfrak{q}_k).
\]
The set of points $p_1, \ldots ,p_k$ is called the \textbf{support} of $R$.
The number $\mu_{p_i} = \mu_{p_i}(R) := 
\dim_\FF(S/\mathfrak{q}_i)$
is called the
\textbf{multiplicity} of $R$ at $p_i$. It measures the
complexity of $R$ near~$p_i$. The \textbf{length} of $R$ is $\sum \mu_{p_i}$.

\begin{exm}
Let $R = \{p_1, \ldots , p_k\}$ be a set of points. Then the
support of $R$ is $\{p_1, \ldots , p_k\}$ and the multiplicity of $R$ at each
$p_i$ equals $1$, thus the length of $R$ is $k$.
\end{exm}

\begin{exm}\label{exam_arrow_and_point}
Let $\FF^2$ have coordinates $x, y$, then $S = \FF[x, y]$. Let
$R=\Spec \FF[\alpha, \beta]/((\alpha^2, \beta)\cap (\alpha-1, \beta-2))$ be a finite subscheme of $\FF^2$ with
support $p_1 = (0, 0), p_2 = (1, 2)$ and multiplicities $2$ at $p_1$ and
$1$ at
$p_2$. For any $f\in S$ the restriction of $f$ to $R$ consists
of the value $f(p_1)$ together with the derivative $\partial_{\alpha}(f)(p_1)$ and
the value $f(p_2)$.
\end{exm}

We may define finite subschemes of any affine variety $\mathcal{A}$ by
replacing $\FF^n$ by $\mathcal{A}$ in the above discussion.
Any projective manifold $X \subseteq \mathbb{P}^n$ is covered by affine
varieties.  A finite subscheme of $X$ is just a finite subscheme of any of
those affine varieties.

\subsection{Introduction to Hilbert scheme of points}\label{sect_Hilbert}

Let $X\subseteq \FF\PP^n$ be a smooth projective manifold. A reader may find it
convenient to assume that $X = \FF\PP^n$.
Good references for the following discussion of Hilbert scheme of points are
e.g.~\cite{Gottsche_Hilbert_schemes_and_Betti_numbers,
Stromme_Intro_to_Hilbert} and \cite[Chapters 5 and 7]{fantechi_et_al_fundamental_ag}.

\begin{df}
The \textbf{Hilbert scheme of $k$ points} of $X$ is the set of finite
subschemes of $X$ with length $k$. It is denoted by $\Hilb_k(X)$. There is
a unique natural scheme structure on $\Hilb_k(X)$
making it a compact projective scheme.
\end{df}
The construction of the Hilbert scheme is due to Grothendieck: we take $d \gg
0$,
a Veronese reembedding $\nu_d: \FF\PP^n\to \FF\PP^N$ and to any
subscheme $R \subseteq X \subseteq \FF\PP^n$ assign the projective span
$\langle \nu_d(R) \rangle$, see Definition~\ref{df_linearspan}. This gives a map to the Grassmannian of
$\FF\PP^N$, which turns out to be Zariski-closed embedding, see
\cite[Chapter 5]{fantechi_et_al_fundamental_ag}. In particular, $\Hilb_k(X)$ is compact.

Algebraic sets (or schemes) may consist of several ``pieces'', called Zariski-irreducible components.
A single Zariski-irreducible component is also called a variety, and it is defined in terms of Zariski topology.
A set $X$ is Zariski-reducible if and only if it is a union of two Zariski-closed subsets $X=Y_1\cup Y_2$, with $Y_1\nsubseteq Y_2$ and $Y_2\nsubseteq Y_1$. It is Zariski-irreducible if it is not Zariski-reducible.
For example, a line or plane is Zariski-irreducible. A union of three axes in a three dimensional affine space is Zariski-reducible, each of the axes is a Zariski-irreducible component.
The Hilbert scheme and its special loci tend to be Zariski-reducible.

Every set of $k$ pairwise distinct points of $X$ is a finite
subscheme $R \in \Hilb_k(X)$, thus we have a natural map $C_k(X) \to
\Hilb_k(X)$. This map is a Zariski-open embedding. The Zariski-closure
$\overline{C}_k(X) \subseteq \Hilb_k(X)$ is a Zariski-irreducible component of
$\Hilb_k(X)$, which we call the \textbf{smoothable component} and denote
$\HilbSm_k(X)$. A finite subscheme $R \in \Hilb_k(X)$ is called
\textbf{smoothable} if it
lies in $\HilbSm_k(X)$.
Intuitively, smoothable component is the tame part of $\Hilb_k(X)$, whereas
other components are wilder.
If $X$ is a curve or a surface, then $\Hilb_k(X) = \HilbSm_k(X)$, but this
equality does not hold when $\dim X > 2$ and $k\gg 0$.


For the purposes of Section~\ref{sect_secants} we introduce finite Gorenstein
schemes.
We say that a finite subscheme $R$ of length $k$ is \textbf{Gorenstein} if
there are only finitely many subschemes $R' \subseteq R$ of length $k-1$.
See e.g.~\cite[Chapter 21]{Eisenbud} for a more standard definition,
see~\cite[Lemma~2.3]{nisiabu_jabu_cactus}
for the equivalence of our definition with the standard one, see Subsection~\ref{sect_apolarity} in Appendix for another equivalent description.
\begin{exm}
    If $R = \{ p_1, \ldots , p_k\}$ is a set of points, then the only
    subschemes of length $k-1$ are subsets of $k-1$ points of $R$. There is
    finitely many of such, thus $R$ is Gorenstein.
\end{exm}

\begin{exm}
    Suppose $R = \Spec \FF [\alpha,\beta]/ ((\alpha^2,\beta) \cap (\alpha-1, \beta-2))$ is as in Example~\ref{exam_arrow_and_point}. Then $\length R =3$ and the only subschemes of length $2$ 
       are
       \[
         \Spec \FF [\alpha,\beta]/ ((\alpha^2,\beta)) \text{ and } \Spec \FF [\alpha,\beta]/ ((\alpha,\beta) \cap (\alpha-1, \beta-2)).
       \]
 Thus $R$ is Gorenstein.  
\end{exm}

\begin{exm}
    If $R = \Spec \FF[\alpha, \beta]/(\alpha^2, \alpha\beta, \beta^2)$,
    then the length of $R$ is equal to $3$. For every $c \in \FF$ the scheme
    $\Spec \FF[\alpha, \beta]/\left((\alpha^2, \alpha\beta, \beta^2) + (\alpha - c\beta)\right)$
      is a subscheme of $R$ of length~$2$.
    There are infinitely many of those, thus $R$ is not Gorenstein.
\end{exm}

Denote by $\HilbGor_k(X) \subseteq \Hilb_k(X)$ the subset of finite Gorenstein
subschemes of length $k$. This is a \textbf{Zariski-open subset}, but it is usually not
Zariski-dense, i.e.~not all Zariski-irreducible components of $\Hilb_k(X)$
intersect $\HilbGor_k(X)$.
Moreover $C_k(X) \subseteq \HilbGor_k(X)$. Thus $\HilbGor_k(X)$ may be thought
as a link between $C_k(X)$ and $\Hilb_k(X)$, which forgets about some of the
wild components.

If $\dim X \leq 3$, then $\HilbGor_k(X) \subseteq \HilbSm_k(X)$. For $\dim X \geq 4$ and $k\gg 0$
this containment does not hold. That is, $\HilbGor_k(X)$ is not necessarily  Zariski-irreducible.
As summary we have the following open inclusions and closed irreducible components (also illustrated in the top part of Figure~\ref{fig_Hilb_k}):
\[
  \begin{array}{rcl}
    \Hilb_k(X) &\stackrel{\text{irr.~comp.}}{\supset}& \HilbSm_k(X) \\
     \cup \text{ {\tiny open}} & & \ \cup \text{ {\tiny open, Zariski-dense}} \\
    \HilbGor_k(X) &\stackrel{\text{open}}{\supset}& C_k(X)
    \end{array}
\]

\begin{figure}[htb]
\centering
\includegraphics[width=0.8\textwidth]{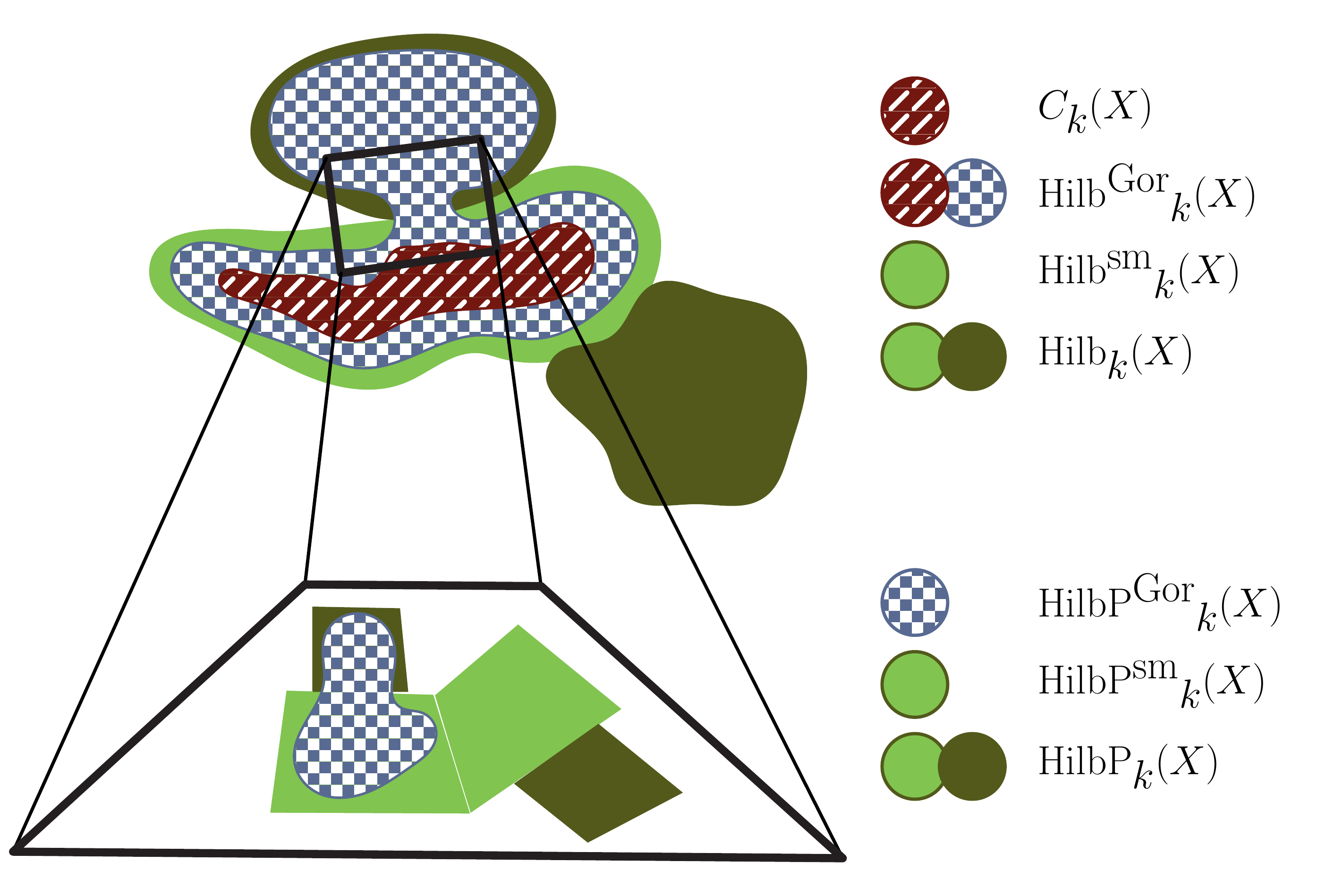}
\caption{Above: The Hilbert scheme $\Hilb_k(X)$ and relations between smoothable locus $\HilbSm_k(X)$, Gorenstein locus $\HilbGor_k(X)$, and the configuration space $C_k(X)$.\newline
Below: The punctual Hilbert scheme $\HilbP_k(X)$ - a closed subscheme of $\Hilb_k(X)$ - and relations between its smoothable locus $\HilbPSm_k(S)$ and Gorenstein locus $\HilbPGor_k(X)$.}
\label{fig_Hilb_k}
\end{figure}

\subsection{Punctual Hilbert scheme}\label{sect_punctual}

\def\HC{\rho}%
Let $X$ be a projective manifold.
Recall that $\Sym^k(X) = X^{\times k}/\Sigma_k$ is the symmetric product of
$k$ copies of $X$.
We have a natural map, called the \textbf{Hilbert-Chow morphism}
\[
  \HC:\Hilb_k(X) \to\Sym^k(X)\quad \text{such that} \quad \HC(R) = \bigcup_{p}
     p^{\mu_p(R)}
\]
   where the union is taken over the (finite) support of $R$,
   and $p^{\mu_p(R)}$ is the point $p$ counted with the multiplicity $\mu_p(R)$.
This map is proper.

\begin{df}
    Let $p\in X$.
    The \textbf{punctual Hilbert scheme} $\HilbP_k(X, p)$ is the set of finite schemes of
    $\Hilb_k(X)$ which are supported at a single point $p\in X$, i.e.~$\HilbP_k(X, p) = \rho^{-1}(p^k)$.

    The \textbf{Gorenstein}
    (respectively, \textbf{smoothable}) punctual Hilbert scheme is the set of Gorenstein (respectively, 
    smoothable) finite schemes in $\HilbP_k(X, p)$, denoted by
    $\HilbPGor_k(X, p)$ (respectively,  $\HilbPSm_k(X, p)$).
\end{df}

Note that since $\HilbP_k(X, p)$ is the preimage under $\HC$ of the point $p^k \in \Sym^k (X)$, it is Zariski-closed in $\Hilb_k(X)$ and proper.
Therefore, also $\HilbPSm_k(X, p)$ is proper and the corresponding Gorenstein subsets
are Zariski-open.
We stress, that despite $\HilbSm_k(X)$ is Zariski-irreducible by its definition,
   the intersection with the punctual Hilbert scheme $\HilbPSm_k(X, p)$ may be Zariski-reducible.
Summarising:
\[
  \begin{array}{rcl}
    \HilbP_k(X,p) &\stackrel{\text{union of irr.~comp.}}{\supset}& \HilbPSm_k(X,p) \\
     \cup \text{ {\tiny open}} & &  \\
    \HilbPGor_k(X,p)
 
  \end{array}
\]
(see also the bottom part of Figure~\ref{fig_Hilb_k}).

For the remainder of this section suppose that $X$ is a projective manifold of dimension
$m$ over $\FF = \CC$. Let $p\in X$ and $p_0\in \FF\PP^m$ be any points. Then there exist
analytically isomorphic neighbourhoods of $p$ in $X$ and $p_0$ in $\FF\PP^m$.
One can prove that $\HilbP_k(X, p)$ depends only on a Euclidean 
neighbourhood of $p$ in $X$. Therefore
\[
   \HilbP_k(X, p) \simeq \HilbP_k(\FF\PP^m, p_0)
\]
and we get analogous isomorphisms for Gorenstein and smoothable
punctual Hilbert schemes. In particular, when analysing the properties of
$\HilbP_k(X, p)$ for smooth $X$ we may always assume that $X$ is a projective
space. Moreover we obtain $\HilbP_k(X, p)  \simeq \HilbP_k(X, q)$ for 
any points $p, q\in X$.

\section{Secant varieties and areole in the complex
setting}\label{sect_secants}

As we have seen the Hilbert scheme serves as an analogue of the configuration space in topology and it naturally comes
 as a projective (thus compact) variety. We commence with a notational convention.

 \dfi[linear span $\langle\cdot \rangle$]\label{df_linearspan}
For any scheme $R$ contained in a projective space we denote by $\langle R \rangle$ 
   the smallest linearly embedded projective subspace containing $R$. We say that $R$ spans $\langle R \rangle$.
The space $\langle R \rangle$ is defined by all linear  equations that vanish when restricted to  $R$
   and is called the \textbf{linear span} of $R$.
\kdfi

Note that we always have $\dim \langle R \rangle \le \length R -1$. For instance, $k$ distinct points may span a projective space of dimension at most $k-1$.

\subsection{Areoles}

For this subsection we restrict to the algebraically closed base field $\FF= \CC$.

\begin{propdef}
   Suppose $X \subset \PP V
$ is a projective variety.
   The \textbf{$k$-th secant variety of $X$} is
   \begin{align*}
     \sigma_k(X)&:= \overline{\bigcup_{\fromto{x_1}{x_k}\in X}  \langle \fromto{x_1}{x_k}\rangle}\subset \PP V=\\
                & = \overline{ \bigcup \set { \langle R \rangle \mid   {R\in \Hilb_{\leq k}(X)}, R\text{ is smoothable in }X } }.
   \end{align*}
\end{propdef}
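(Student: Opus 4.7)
The plan is to verify the stated equality of Zariski closures by a two-way inclusion. Write $\Sigma_1$ for the left-hand closure (union over $k$-tuples of points) and $\Sigma_2$ for the right-hand closure (union over smoothable subschemes of length at most $k$).

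The inclusion $\Sigma_1 \subseteq \Sigma_2$ is immediate: every $k$-tuple $(x_1,\dotsc,x_k) \in X^{\times k}$ determines the reduced subscheme $R := \{x_1,\dotsc,x_k\}_{\mathrm{red}}$ of length $n \le k$, which lies in $C_n(X) \subseteq \HilbSm_n(X)$ and satisfies $\langle x_1,\dotsc,x_k\rangle = \langle R\rangle$. Hence the union inside $\Sigma_1$ is contained in the union inside $\Sigma_2$, and so is its Zariski closure.

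For $\Sigma_2 \subseteq \Sigma_1$, since $\Sigma_1$ is Zariski-closed it suffices to show $\langle R \rangle \subseteq \Sigma_1$ for each smoothable $R \in \Hilb_n(X)$ with $n \le k$. By smoothability, $R$ is the special fibre of a flat family $\{R_t\}_{t \in T} \subseteq \Hilb_n(X)$ over a smooth irreducible curve $T$, with $R_t \in C_n(X)$ for $t \ne 0$ and $R_0 = R$. Let $s := \dim \langle R_t\rangle$ for generic $t$; by lower semi-continuity of the span dimension one has $s \ge \dim \langle R\rangle$. After shrinking $T$, the assignment $t \mapsto \langle R_t\rangle$ defines a morphism $T \setminus \{0\} \to Gr(s+1, V)$, which by properness of the Grassmannian extends to a morphism $T \to Gr(s+1, V)$, producing a limit subspace $L \in Gr(s+1, V)$. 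The incidence condition ``$R_t \subseteq L_t$'' is Zariski-closed in $\Hilb_n(X) \times Gr(s+1, V)$, so it passes to the limit: $R \subseteq L$, and therefore $\langle R\rangle \subseteq L$. On the other hand, for $t \ne 0$ the subspace $L_t = \langle R_t\rangle = \langle x_1(t),\dotsc,x_n(t)\rangle$ is contained in $\Sigma_1$; since $\Sigma_1$ is closed in $\PP V$ and the family of subspaces $L_t$ specialises to $L$, we conclude $L \subseteq \Sigma_1$, and hence $\langle R \rangle \subseteq \Sigma_1$, as required.

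The main technical point is the possible jump in dimension of $\langle R_t\rangle$ at $t = 0$; the proof above circumvents this by replacing $\langle R\rangle$ by the larger ``flat limit'' subspace $L$, so that the algebraic limit is taken inside a single Grassmannian. Equivalently, one may argue via the closed incidence variety $I := \{(R,v) \in \Hilb_n(X) \times \PP V \mid v \in \langle R\rangle\}$, whose projection to $\PP V$ restricted to $\HilbSm_n(X)$ is closed and recovers the union inside $\Sigma_2$.
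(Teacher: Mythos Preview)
Your proof is correct and follows the same underlying idea as the paper, which dispatches the equivalence in one sentence: ``Smoothable schemes are the limits of smooth schemes (disjoint reduced points), hence the linear spans of smoothable schemes are contained in the limits of spans of smooth schemes.'' You have simply written out carefully what this sentence means, handling the possible dimension jump of $\langle R_t\rangle$ via the flat limit in the Grassmannian; this is exactly the right way to make the paper's remark rigorous.
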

The equivalence of the two definitions is clear: Smoothable schemes are the limits of smooth schemes (disjoint reduced points),
  hence the linear spans of smoothable schemes are contained in the limits of spans of smooth schemes.

\textbf{Cactus varieties} are introduced in \cite{nisiabu_jabu_cactus} as a generalisation of secant varieties.
The stem of the cactus represents the secant variety, while the spines represent
   linear spans of non-smoothable Gorenstein schemes, typically supported at a single point.
Here we work with another set, \textbf{areole}, obtained in a similar way.
It consist of linear spans of only the smoothable schemes supported at a single fixed point.
It is therefore contained in the intersection of stem and spines.
In botany, \emph{areoles} are the parts of the cactus stems out of which the spines grow.
Remarkably, also beautiful cactus flowers grow out of areole,
  and analogously, regularity theorems spring out of the properties of the areole variety.

\begin{df}
   Suppose $X \subset \PP V$ is a projective variety and $p\in X$ is a point.
   The \textbf{$k$-th areole} at $p$ is
   \begin{align*}
     \gota_k(X,p)&:= \overline{ \bigcup \set { \langle R \rangle \mid   {R\in \HilbP_{\leq k}(X, p)},
                              R\text{ is smoothable in }X } }.
   \end{align*}
\end{df}
Note that we necessarily have $\gota_k(X,p) \subset \sigma_k(X)$.
Considering also shorter schemes, i.e.~$\Hilb_{\leq k}$ and $\HilbP_{\leq k}$
   is convenient for technical reasons,
   but essentially equally well we may consider $\Hilb_{k}$ and $\HilbP_{k}$,
   see \cite[Remark~2.4]{nisiabu_jabu_cactus} for more details.

\ex
If $p$ is a smooth point of $X$, then $\gota_2(X,p)$ is the (embedded, projective) tangent space to $X$ at $p$.
More generally, when $X$ is not smooth at $p$,
then $\gota_2(X,p)$ is called the tangent star.
It always contains the tangent cone.
\kex

We underline, that in general, in all the definitions of secant variety and areole we have to consider the closure:
  the union of linear spans is not necessarily closed,
  and it does not necessarily have the structure of algebraic variety.
However, in order for the smoothable Hilbert scheme to have the properties
\ref{item_partial_compactification_density}--\ref{item_partial_compactification_map_to_Grassmannian}
in Section~\ref{sect_partial_compactification}, we should better get rid of the closure.
For this purpose we define:
\dfi\label{def_strongly_regular}
Let $X$ be a projective variety over $\CC$, and suppose $f\colon X \to \CC\PP^N$
  is an algebraic morphism.
We say $f$ is \textbf{strongly projectively $k$-regular},
         if the image of any smoothable length $k$ zero dimensional subscheme of $X$
         spans a projective space of dimension $k-1$, i.e.~$\dim \langle R\rangle$ is maximal possible for every scheme $R \in \HilbSm_k(X)$.
\kdfi

\begin{lema}\label{lem_map_to_grassmannian_and_no_closure}
   Suppose $f\colon X \to \CC\PP^N$ is strongly projectively $k$-regular.
   Then $f$ determines a natural algebraic morphism:
   \[
      \HilbSm_{k} X \to Gr(\CC\PP^{k-1}, \CC\PP^{N}),
   \]
   and the closures in the definitions of secant varieties and areole are redundant:
   \begin{align*}
     \sigma_k(X)& =  \bigcup \set { \langle R \rangle \mid   {R\in \HilbSm_{\leq k}(X)}},\\
     \gota_k(X,p)& =  \bigcup \set { \langle R \rangle \mid   {R\in \HilbPSm_{\leq k}(X, p)}} .
   \end{align*}
\end{lema}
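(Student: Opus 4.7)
The plan is to deduce both claims from a single construction carried out on the universal family of the Hilbert scheme. Write $H := \HilbSm_k(X)$ and let $\pi\colon \mathcal{Z}\to H$ be the universal family, which is finite and flat of degree $k$ by definition of the Hilbert scheme and sits inside $H\times X$. Let $L = f^*\mathcal{O}_{\CC\PP^N}(1)$, a line bundle on $X$, and let $V = H^0(\CC\PP^N,\mathcal{O}(1))$ so that $f$ corresponds to a distinguished map $V\otimes \mathcal{O}_X \to L$. Pushing the restriction $L|_{\mathcal{Z}}$ forward via $\pi$ gives a coherent sheaf $\mathcal{F} := \pi_*\bigl(L|_{\mathcal{Z}}\bigr)$ on $H$; since $\pi$ is finite flat of degree $k$ and $L|_\mathcal{Z}$ is invertible, $\mathcal{F}$ is locally free of rank $k$.

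The key observation is that the composition
\[
V\otimes \mathcal{O}_H \longrightarrow \pi_*\bigl(L|_{\mathcal{Z}}\bigr)=\mathcal{F}
\]
has, at the point $[R]\in H$, fibre the restriction map $V \to H^0(R, L|_R)$, whose image has dimension $\dim\langle f(R)\rangle+1$. Strong projective $k$-regularity says precisely that this number equals $k$ for every $R\in H$, so the map is surjective on every fibre, hence surjective by Nakayama. A surjection $V\otimes \mathcal{O}_H \twoheadrightarrow \mathcal{F}$ onto a rank-$k$ locally free sheaf is, by the universal property of the Grassmannian, the same datum as a morphism $H \to Gr(\CC\PP^{k-1},\CC\PP^N)$ sending $[R]$ to $\langle f(R)\rangle$. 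This proves the first assertion.

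For the second assertion I would invoke properness. Let $\mathcal{G}$ denote the universal $\CC\PP^{k-1}$-bundle over $Gr(\CC\PP^{k-1},\CC\PP^N)$ and form the pullback incidence variety
\[
\mathcal{I} \;=\; \bigl\{([R],v)\in \HilbSm_{\le k}(X)\times \CC\PP^N \;\bigm|\; v\in\langle f(R)\rangle\bigr\},
\]
which is Zariski-closed via the morphism to the Grassmannian built in part (1) (carried out on each stratum $\HilbSm_i(X)$ for $i\le k$ and then assembled). Because $\HilbSm_{\le k}(X)$ is a closed subscheme of the projective scheme $\Hilb_{\le k}(X)$, it is projective, so the projection $\mathcal{I}\to\CC\PP^N$ is proper and its image is Zariski-closed. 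But that image is exactly $\bigcup\{\langle R\rangle : R\in \HilbSm_{\leq k}(X)\}$, so no further closure is needed. The statement for $\gota_k(X,p)$ is identical after replacing $\HilbSm_{\le k}(X)$ by its closed, hence projective, subscheme $\HilbPSm_{\le k}(X,p)$.

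The main subtlety is really the first step: one must justify, via base change/flatness, that the fibre of $\mathcal{F}$ at $[R]$ is canonically $H^0(R,L|_R)$ and that the evaluation map constructed globally specialises to the restriction map whose image measures $\dim\langle f(R)\rangle$. This is routine once flatness of $\pi$ is in hand, but it is the place where the hypothesis of \emph{strong} $k$-regularity is genuinely used: weaker generic maximality would only produce a rational map to $Gr(\CC\PP^{k-1},\CC\PP^N)$, and the surjectivity argument removing the closures would then fail on the indeterminacy locus.
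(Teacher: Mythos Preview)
The paper does not supply its own proof of this lemma; it simply cites \cite[Proposition~2.5]{nisiabu_jabu_cactus}, \cite[Lemma~2.6]{jabu_ginensky_landsberg_Eisenbuds_conjecture}, and \cite{bernardi_gimigliano_ida}. Your argument is correct and is precisely the standard construction those references carry out: push forward the line bundle along the universal family to obtain a rank-$k$ locally free sheaf, use the hypothesis to make the evaluation map from $V\otimes\mathcal{O}_H$ surjective, invoke the universal property of the Grassmannian, and then use properness of the Hilbert scheme to conclude closedness of the union of spans.

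One small point you leave implicit is that strong projective $k$-regularity implies strong projective $i$-regularity for every $i\le k$: given a smoothable $R$ of length $i$, adjoin $k-i$ general reduced points to obtain a smoothable $R'$ of length $k$; then $k-1=\dim\langle f(R')\rangle\le\dim\langle f(R)\rangle+(k-i)$ forces $\dim\langle f(R)\rangle=i-1$. This is what licenses ``carrying out on each stratum $\HilbSm_i(X)$'' the construction of part~(1), with target $Gr(\CC\PP^{i-1},\CC\PP^N)$ rather than $Gr(\CC\PP^{k-1},\CC\PP^N)$. With that remark added, your write-up is complete and self-contained, which is more than the paper itself offers here.
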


For a proof see \cite[Proposition~2.5]{nisiabu_jabu_cactus}, \cite[Lemma~2.6]{jabu_ginensky_landsberg_Eisenbuds_conjecture}, or \cite{bernardi_gimigliano_ida}.

It follows that in this case $\overline{C}_k(X) = \HilbSm_{k} X$ satisfies the conditions \ref{item_partial_compactification_density}--\ref{item_partial_compactification_map_to_Grassmannian}
  of compactification of the configuration space:
  \ref{item_partial_compactification_density} holds by the definition of smoothable schemes, \ref{item_partial_compactification_properness} holds since the Hilbert-Chow morphism is proper, and
  \ref{item_partial_compactification_map_to_Grassmannian} holds by Lemma~\ref{lem_map_to_grassmannian_and_no_closure}.

\begin{thm}\label{thm_k_regular_to_space_of_dim_same_as_areole}
   Suppose $\FF=\CC$, $X$ is a projective algebraic variety of dimension $m$,
      and $f \colon X \to \CC\PP^N$ is a strongly projectively $k$-regular algebraic morphism.
   Fix a smooth point $p \in f(X)$.
   Then there exists an affinely $k$-regular continuous map $\CC^m \to \CC^{N'}$, where $N'=\dim \gota_k(f(X), p)$.
\end{thm}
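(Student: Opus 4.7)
The plan is to apply Lemma~\ref{lem_projection_from_H_disjoint_with_areole} with $H$ a general projective linear subspace of $\CC\PP^N$ of dimension $N-N'-1$, taking the ambient $\FF$-manifold $M$ of that lemma to be a small Euclidean neighbourhood of a preimage of $p$ in $X$. One may assume $k \ge 2$ (for $k=1$ the statement is trivial); then the strong $k$-regularity of $f$ forces $f$ to be injective on points, so $p$ has a unique preimage $\widetilde p \in X$. Because $p$ is smooth in $f(X)$ and $f$ is a local homeomorphism near $\widetilde p$, the point $\widetilde p$ is smooth in $X$, so I may pick a Euclidean neighbourhood $M$ of $\widetilde p$ in $X$ homeomorphic to $\BB_\CC^m$.

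Next, Lemma~\ref{lem_map_to_grassmannian_and_no_closure} applied to $f|_M$ shows that $\overline{C}_k(M) := \HilbSm_k(M)$ satisfies the three conditions \ref{item_partial_compactification_density}--\ref{item_partial_compactification_map_to_Grassmannian} of Subsection~\ref{sect_partial_compactification}. The set $\gota_{\widetilde p}$ from Lemma~\ref{lem_projection_from_H_disjoint_with_areole} is the image of the universal subbundle restricted to $\rho^{-1}(\widetilde p^{\,k}) = \HilbPSm_{\le k}(X, \widetilde p)$, and by the same Lemma~\ref{lem_map_to_grassmannian_and_no_closure} it admits the closure-free description
\[
   \gota_{\widetilde p} \;=\; \bigcup_{R \in \HilbPSm_{\le k}(X, \widetilde p)} \langle f(R)\rangle \;=\; \gota_k(f(X), p).
\]
In particular it is a Zariski-closed algebraic subvariety of $\CC\PP^N$ of dimension $N'$, realised as the image of the projective bundle $\ccE_{\widetilde p}$ over the proper scheme $\HilbPSm_{\le k}(X, \widetilde p)$ under an algebraic morphism.

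Finally, the dimension count $\dim H + \dim \gota_{\widetilde p} = (N - N' - 1) + N' = N - 1 < N$ shows that a general linear subspace $H \subset \CC\PP^N$ of dimension $N-N'-1$ misses $\gota_{\widetilde p}$. Lemma~\ref{lem_projection_from_H_disjoint_with_areole} then yields the sought affinely $k$-regular continuous map $g \colon \CC^m \to \CC^{N-\dim H - 1} = \CC^{N'}$. The step I expect to be the main obstacle is the analytic-to-algebraic identification in the middle paragraph: one must match the topologically defined $\gota_{\widetilde p}$ from the lemma with the global algebraic areole $\gota_k(f(X), p)$, and conclude that it is a closed subvariety of dimension exactly $N'$. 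Strong $k$-regularity is precisely the hypothesis designed to make this identification go through via Lemma~\ref{lem_map_to_grassmannian_and_no_closure}; once that is handled, the final transversality argument is routine.
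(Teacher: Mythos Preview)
Your proof is correct and follows essentially the same route as the paper: reduce to $k\ge 2$, use $\HilbSm_k$ as the partial compactification satisfying \ref{item_partial_compactification_density}--\ref{item_partial_compactification_map_to_Grassmannian}, identify the topological $\gota_{\widetilde p}$ of Lemma~\ref{lem_projection_from_H_disjoint_with_areole} with the algebraic areole via Lemma~\ref{lem_map_to_grassmannian_and_no_closure}, and then pick a general linear $H$ of the right dimension. The only cosmetic differences are that the paper takes the ambient manifold to be the smooth locus $X_0\subseteq X$ (a quasi-projective variety, so that $\HilbSm_k$ is unambiguously algebraic) rather than a Euclidean ball, and it cites Bertini for the avoidance step where you argue by a direct dimension count; also note the harmless slip $\rho^{-1}(\widetilde p^{\,k}) = \HilbPSm_{k}(X,\widetilde p)$ rather than $\HilbPSm_{\le k}$, which does not affect the identification with $\gota_k(f(X),p)$.
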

Note that the claim of the theorem does not depend on the initial value of $N$,
   thus it is enough to use a $k$-regular embedding with very high value $N$.
In the rest of this subsection we prove the theorem.
In Subsection~\ref{sect_areole_and_punctual_Hilb}
  we relate the bounds on dimension of $\gota_k(f(X), f(p))$ with the dimension of $\HilbPSm_k(X,p)$ and $\HilbPGor_k(X,p)$.
In Subsection~\ref{sect_Veronese_embedding} we explain that it is easy to construct strongly $k$-regular algebraic
  morphisms --- the well known Veronese varieties of sufficiently high degree are strongly $k$-regular.
In Section~\ref{sect_real_case} we explain how to use these results to conclude analogous statements for real manifolds.
Finally, in Appendix we explicitly calculate the dimension of  $\HilbPGor_k(X,p)$ for small values of $k$.

\begin{proof}[Proof of Theorem~\ref{thm_k_regular_to_space_of_dim_same_as_areole}]
   If $k=1$, then there is nothing to prove, so we may assume $f$ is one-to-one.
   Let $X_0$ be the smooth locus of the image $f(X)$ treated as a subset of $X$.
   Consider the partial compactification $\overline{C}_k(X_0) = \HilbSm_k(X_0)$
      of $C_k(X_0)$ as in Subsection~\ref{sect_partial_compactification}.
   Since  $\gota_k(f(X), p) = \gota_k(f(X_0), p)$, by Bertini theorem  \cite[Thm~I.6.3. 1a), 1b)]{jouanolou_Bertini}
      a general linear subspace $H \subset \CC\PP^N$
      of dimension $N- N'-1$ avoids  $\gota_k(f(X_0), p)$.
   Thus by Lemma~\ref{lem_projection_from_H_disjoint_with_areole}
      there exists an affinely $k$-regular continuous map $\CC^m \to \CC^{N'}$.
\end{proof}

\subsection{Areole and punctual Hilbert scheme}\label{sect_areole_and_punctual_Hilb}
The results in this subsection are independent of the base field $\FF$.

As motivated by the previous section, it is desirable to calculate, or at least bound the dimension of
   $\gota_k(X, p)$ for a subvariety $X \subset \FF\PP^N$ and $p \in X$ a smooth point.
By definition, $\gota_k(X, p)$ is parametrized by a family of (projective) linear subspaces of dimension $k-1$
   over $\HilbPSm_k(X,p)$.
Thus we immediately deduce:
\begin{equation}\label{equ_bound_on_dimension_of_areole_vs_smoothable}
  \dim \gota_k(X, p) \le \dim \HilbPSm_k(X,p) + k-1.
\end{equation}

Therefore, we first determine the dimensions of $\HilbPSm_k(X,p)$.
Note that this dimension does not depend on the embedding of $X$ into the projective space.
Moreover, $X$ can be replaced (for instance) by any Zariski open neighbourhood of $p$ in $X$,
  and in fact by any smooth variety of the same
  dimension, as mentioned at the end of Subsection~\ref{sect_punctual}.
The lower bound in the following Lemma relies on standard material reviewed in Appendix.
  
\begin{lema}\label{lem_bounds_on_dim_HilbPSm}
   In the setting as above, if $k\ge 2$, then we have:
   \[
      (\dim X -1)(k-1) \le \dim \HilbPSm_k(X,p) \le (k-1)\dim X -1
   \]
\end{lema}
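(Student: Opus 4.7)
Set $n=\dim X$. I would first reduce to the case $(X,p)=(\FF^n,0)$: since $\HilbPSm_k(X,p)$ depends only on a formal (or analytic) neighbourhood of the smooth point $p$, as noted at the end of Subsection~\ref{sect_punctual}, we may replace $X$ by affine $n$-space.

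For the upper bound, the plan is to exploit the translation action of the additive group $\FF^n$ on itself, which induces an algebraic action on the irreducible variety $\HilbSm_k(\FF^n)$ of dimension $nk$. Any nonzero translation $v\in\FF^n$ moves a scheme supported at $0$ to one supported at $v$, so the orbit map
\[
  \mu\colon \FF^n\times \HilbPSm_k(\FF^n,0)\longrightarrow \HilbSm_k(\FF^n),\qquad (v,R)\mapsto v\cdot R,
\]
is injective; hence its image $Y$ has dimension $n+\dim\HilbPSm_k(\FF^n,0)$. Because $k\ge 2$, every scheme in $Y$ is non-reduced (it is supported at a single point with multiplicity $k$), so $Y$ is disjoint from the open dense subset $C_k(\FF^n)\subseteq\HilbSm_k(\FF^n)$ and therefore sits in a proper closed subset of the irreducible variety $\HilbSm_k(\FF^n)$. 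This forces $\dim Y\le nk-1$, and the upper bound follows.

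For the lower bound I would exhibit an explicit subfamily of dimension $(n-1)(k-1)$, namely the curvilinear locus consisting of schemes $R$ abstractly isomorphic to $\Spec\FF[t]/(t^k)$. Such an $R$ corresponds to a surjection $\FF[x_1,\ldots,x_n]\twoheadrightarrow \FF[t]/(t^k)$, $x_i\mapsto \phi_i(t)$, with $\phi_i(0)=0$ for all $i$ and $\phi_j'(0)\ne 0$ for some $j$. The space of such $n$-tuples is Zariski-open in an $n(k-1)$-dimensional affine space, and two tuples define the same subscheme precisely when they differ by a ring automorphism of $\FF[t]/(t^k)$, i.e.\ a reparametrization $t\mapsto a_1 t+a_2 t^2+\cdots+a_{k-1}t^{k-1}$ with $a_1\ne 0$. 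This reparametrization group is $(k-1)$-dimensional and acts freely on the open locus, so the quotient has dimension $n(k-1)-(k-1)=(n-1)(k-1)$. Curvilinear schemes are smoothable (they deform into $k$ distinct points along any smooth curve extending them), so the whole locus lies in $\HilbPSm_k(\FF^n,0)$ and the lower bound follows.

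The main obstacle I anticipate is the upper bound: one must carefully invoke the irreducibility and dimension of $\HilbSm_k(\FF^n)$ (stated in Subsection~\ref{sect_Hilbert}) and verify that the orbit map $\mu$ is genuinely injective rather than merely generically so. The lower bound argument is more concrete, though it still requires checking that the reparametrization group acts freely on the open locus where $\phi_j'(0)\ne 0$ for some $j$, which amounts to a short induction on the order of the automorphism.
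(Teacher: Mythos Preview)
Your proposal is correct and follows essentially the same approach as the paper. For the upper bound, the paper considers $\mathcal{H}=\bigcup_{x\in X}\HilbPSm_k(X,x)$ as a proper subset of the irreducible $\HilbSm_k(X)$ and uses that the fibres $\HilbPSm_k(X,x)$ are pairwise disjoint and equidimensional, which is exactly your translation-orbit argument phrased without first passing to affine space; for the lower bound, the paper also uses the curvilinear (aligned) locus $\Spec\FF[t]/t^k$, but delegates the dimension count $(n-1)(k-1)$ to Corollary~\ref{ref:alignabledim:cor} in the Appendix, whereas you carry out that parameter count directly.
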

\begin{proof}
   By \eqref{equ_dim_Hilb_R_and_Hilb_C} and the discussion in Subsection~\ref{sect_generalities_on_deformations},
      without loss of generality we may assume that the base field $\FF$ is algebraically closed.
      
   To obtain the lower bound $\dim \HilbPSm_k(X,p) \ge (\dim X -1)(k-1)$
     we construct a family of dimension  $(\dim X -1)(k-1)$
     of local smoothable schemes contained in $X$ and supported at $p$.
   This is obtained by the family of subschemes isomorphic to
     $\Spec \FF[t] / t^k$.
   The dimension of this family is calculated in
   Corollary~\ref{ref:alignabledim:cor}.
   Note that $\Spec \FF[t] / t^k$ is Gorenstein,
     thus the same lower bound also applies to $\HilbPGor_k(X,p)$.

   To obtain the upper bound, suppose $X$ is smooth and  consider
   \[
      \ccH := \bigcup_{x \in X} \HilbPSm_k(X,x)  \subset \HilbSm_k(X).
   \]
   The locus $ \HilbSm_k(X)$ is Zariski-irreducible and $\ccH \subsetneqq \HilbSm_k(X)$ (for $k\ge 2$),
     hence
     \[
      \dim \ccH \le \dim \HilbSm_k(X) -1 = k \dim X -1.
     \]
  Since $X$ is smooth, the dimension of $\HilbPSm_k(X,x)$ does not depend on
  $x$ (see Section~\ref{sect_punctual}). Furthermore,  $\HilbPSm_k(X,x)$  are pairwise disjoint
    for various points $x \in X$,
    hence 
    \[
       \dim \ccH = \HilbPSm_k(X,p) + \dim X.
    \] 
  Combining the two inequalities we obtain the desired upper bound.
\end{proof}

Depending on the values of $k$ and $\dim X$, both extremes $(\dim X -1)(k-1)$ or $(k-1)\dim X -1$
   may be obtained as $\dim \HilbPSm_k(X,p)$, see Theorem~\ref{ref:expecteddim:thm} and Example~\ref{ref:1551:example} in Appendix for more details.

On the other hand, we also have the following property:
If $R \subset X$ is a finite subscheme, then
\[
  \langle R \rangle = \bigcup_{\begin{array}{c}
                                 Q \subset R \\ Q \text{ is Gorenstein}
                               \end{array}}
                                \langle Q \rangle,
\]
see \cite[Lemma~2.3]{nisiabu_jabu_cactus}.
Thus $\gota_k(X,p)$ is contained in the variety swept out
   by linear spans of finite local Gorenstein schemes of length at most $k$ supported at $p$
   and so we obtain the following bound:
\begin{equation}\label{equ_bound_on_dimension_of_areole_vs_gorenstein}
  \dim \gota_k(X, p) \le \max\set{\dim \HilbPGor_{i}(X,p) + i-1 \mid 1 \le i \le k}.
\end{equation}

\begin{rem}
   Considering the schemes shorter than $k$ in Lemma~\ref{lem_map_to_grassmannian_and_no_closure}
     is merely a decoration to slightly facilitate the technical arguments.
   Yet here we restrict to punctual Gorenstein schemes and for these it is essential to consider also shorter schemes: for instance, there exist punctual Gorenstein schemes of length $4$, which are not contained in any punctual Gorenstein scheme of length $5$.
\end{rem}

In principle, we could restrict attention to a subset of $\HilbPGor_{i}(X,p)$
   consisting of those Gorenstein schemes that are contained in a smoothable scheme of length $k$.
Yet such description is not much helpful, as we are not able to provide better bounds on the dimension of these spaces.

\subsection{Veronese embedding as a \texorpdfstring{$k$}{k}-regular morphism}\label{sect_Veronese_embedding}
In this subsection we still work over the base field $\FF=\CC$.

Consider the $d$-th Veronese embedding:
\[
  v_d \colon \CC\PP^m\to \CC\PP^{{\binom{m+d}{m}}-1} = \PP (S^d \CC^{m+1})
\]
that is defined by a very ample line bundle $\mathcal{O}(d)$ on $\CC\p^m$.
Explicitly, in coordinates the map is given by all monomials of degree $d$ in $m+1$ variables.
The starting point of this subsection is a standard observation that a Veronese embedding of sufficiently high degree is
   $k$-regular:
\begin{lema}\label{lem_v_d_is_k_regular}
   Suppose $d \ge k-1$. Then the algebraic morphism $v_d$ is strongly projectively $k$-regular.
\end{lema}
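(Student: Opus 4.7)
The plan is to reformulate the statement as a cohomological vanishing and then establish that vanishing by induction on $k$.

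Since $v_d$ is the embedding associated with the complete linear system $|\mathcal{O}(d)|$ on $\CC\PP^m$, for any zero-dimensional subscheme $R \subset \CC\PP^m$ of length $k$ the hyperplanes in $\CC\PP^{\binom{m+d}{m}-1}$ containing $v_d(R)$ are precisely the degree $d$ forms vanishing on $R$. Hence
\[
\dim\langle v_d(R)\rangle \;=\; \rk\bigl(\rho_{R,d}\bigr) - 1,
\qquad
\rho_{R,d}\colon H^0(\CC\PP^m,\mathcal{O}(d))\to H^0(R,\mathcal{O}_R),
\]
and since $\dim H^0(R,\mathcal{O}_R) = k$ we always have $\dim\langle v_d(R)\rangle \le k - 1$, with equality if and only if $\rho_{R,d}$ is surjective, if and only if $H^1(\CC\PP^m,\mathcal{I}_R(d)) = 0$. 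Thus the lemma reduces to the purely cohomological assertion that for every length $k$ zero-dimensional subscheme $R \subset \CC\PP^m$ and every $d \ge k - 1$ one has $H^1(\CC\PP^m,\mathcal{I}_R(d)) = 0$. Note that this stronger statement makes no reference to smoothability.

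I would prove this vanishing by induction on $k$. The base case $k = 1$ is immediate. For the inductive step, pick a colength-one subscheme $R' \subset R$, which exists because the Artinian algebra $\mathcal{O}_R$ admits a composition series. The quotient $\mathcal{I}_{R'}/\mathcal{I}_R$ is isomorphic to the structure sheaf $\mathcal{O}_p$ of a single reduced point $p$ in the support of $R$, giving the short exact sequence
\[
0 \to \mathcal{I}_R(d) \to \mathcal{I}_{R'}(d) \to \mathcal{O}_p \to 0.
\]
In the associated long exact cohomology sequence, the induction hypothesis yields $H^1(\mathcal{I}_{R'}(d-1)) = 0$, and monotonicity of $h^1(\mathcal{I}_{R'}(\cdot))$ in the twist (for instance via multiplication by a linear form non-vanishing on $R'$, which is a unit on $\mathcal{O}_{R'}$) upgrades this to $H^1(\mathcal{I}_{R'}(d)) = 0$ for all $d \ge k - 2$. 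Hence $H^1(\mathcal{I}_R(d)) = 0$ is equivalent to surjectivity of the natural map $H^0(\mathcal{I}_{R'}(d)) \to H^0(\mathcal{O}_p) = \CC$, that is, to the existence of a degree $d$ form vanishing on $R'$ but not on $R$.

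The main obstacle is precisely this last step at the sharp value $d = k - 1$. When $R$ is reduced the required form can be written down directly as a product of $k - 1$ linear forms, each chosen to pass through one of the points of $R'$ but not through $p$. In general $R$ and $R'$ may share their entire support, so no hyperplane separates $p$ from $R'$; the remedy is a local argument at $p$, lifting a socle generator of the one-dimensional quotient $(\mathcal{I}_{R'}/\mathcal{I}_R)_p$ to a global section of $\mathcal{I}_{R'}(k-1)$ by analysing the local Artinian structure of $\mathcal{O}_{R,p}$. This local step is essentially Macaulay's inverse-systems construction, and is packaged by the classical sharp bound that the Castelnuovo--Mumford regularity of a length $k$ zero-dimensional subscheme of $\CC\PP^m$ is at most $k$, which one may invoke directly to conclude the induction.
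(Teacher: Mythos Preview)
The paper does not supply its own proof of this lemma; it simply refers to \cite[Lemma~2.3]{jabu_ginensky_landsberg_Eisenbuds_conjecture}. Your argument is correct and follows the standard route. The cohomological translation is exactly right, and the vanishing $H^1(\mathcal{I}_R(d)) = 0$ for all $d \ge k-1$ and all length-$k$ subschemes $R$ is precisely the classical fact that the Castelnuovo--Mumford regularity of such a scheme is at most $k$. You also correctly observe that smoothability plays no role.

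One remark on presentation: once you invoke the regularity bound in your last sentence, the induction becomes superfluous, since that bound already delivers the desired $H^1$ vanishing in one stroke. If you prefer the induction to stand on its own, the local step can be made explicit without appealing to regularity: at a support point $p$ of local length $l$ one has $\mathfrak{m}_p^{\,l} \subset (\mathcal{I}_R)_p$, so any representative of a generator of $(\mathcal{I}_{R'}/\mathcal{I}_R)_p$ can be truncated to a polynomial of degree at most $l-1$; multiplying this by a product of linear forms through the remaining support points (each taken to the power of its local length, total degree $k-l$, and chosen not to pass through $p$) produces a global form of degree $k-1$ lying in $\mathcal{I}_{R'}$ but not in $\mathcal{I}_R$. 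This is, of course, essentially how the regularity bound itself is proved.
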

See for instance~\cite[Lemma~2.3]{jabu_ginensky_landsberg_Eisenbuds_conjecture}.

The following corollary proves Theorems~\ref{thm_main_bound_intro_any_k} and \ref{thm_bound_by_hilbert_scheme_intro} in the complex case. The difference of $-1$ is due to considering affinely $k$-regular in the corollary and linearly $k$-regular maps in the introduction, see Lemma~\ref{lem_going_back_and_forth_between_regularities}.

\begin{cor}\label{cor_examples_of_k_regular_maps_over_C}
      Suppose $k\ge 2$. There exists an affinely $k$-regular continuous map $\CC^m \to \CC^N$,
    for certain $N \le \dim \HilbPSm_k(\CC\PP^m,p) + k-1 \le (k-1)(m+1) -1$ for any $p \in \CC\PP^m$.
\end{cor}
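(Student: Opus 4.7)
The plan is to combine the four ingredients already developed in the preceding subsections: the strong $k$-regularity of high-degree Veronese embeddings (Lemma~\ref{lem_v_d_is_k_regular}), the areole-based reduction from a strongly $k$-regular morphism to an affinely $k$-regular map on $\CC^m$ (Theorem~\ref{thm_k_regular_to_space_of_dim_same_as_areole}), the bound on $\dim \gota_k$ in terms of $\dim \HilbPSm_k$ (inequality~\eqref{equ_bound_on_dimension_of_areole_vs_smoothable}), and the numerical upper bound on $\dim \HilbPSm_k(\CC\PP^m, p)$ from Lemma~\ref{lem_bounds_on_dim_HilbPSm}.

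First, I take $d = k-1$ and consider the Veronese embedding $v_d \colon \CC\PP^m \to \CC\PP^{\binom{m+d}{m}-1}$. By Lemma~\ref{lem_v_d_is_k_regular} this morphism is strongly projectively $k$-regular. Pick any point $q \in \CC\PP^m$ and set $p = v_d(q)$; since $v_d$ is a closed embedding, $p$ is a smooth point of the image. Applying Theorem~\ref{thm_k_regular_to_space_of_dim_same_as_areole} to $X = \CC\PP^m$ (mapped by $v_d$) yields an affinely $k$-regular continuous map $\CC^m \to \CC^{N}$ with
\[
  N = \dim \gota_k\bigl(v_d(\CC\PP^m), p\bigr).
\]

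Next, to bound $N$, I invoke~\eqref{equ_bound_on_dimension_of_areole_vs_smoothable}, which gives
\[
  \dim \gota_k\bigl(v_d(\CC\PP^m), p\bigr) \le \dim \HilbPSm_k\bigl(v_d(\CC\PP^m), p\bigr) + k - 1.
\]
Since $v_d$ is an isomorphism onto its image, it induces an isomorphism $\HilbPSm_k(v_d(\CC\PP^m), p) \simeq \HilbPSm_k(\CC\PP^m, q)$, so the right-hand side equals $\dim \HilbPSm_k(\CC\PP^m, q) + k - 1$. This is the first claimed inequality. (Recall from the end of Subsection~\ref{sect_punctual} that for smooth $X$ the isomorphism class of $\HilbPSm_k(X, q)$ does not depend on $q$, so we may write the bound with ``any $p \in \CC\PP^m$''.)

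Finally, the second inequality follows from the upper bound in Lemma~\ref{lem_bounds_on_dim_HilbPSm}: with $\dim \CC\PP^m = m$ and $k \ge 2$,
\[
  \dim \HilbPSm_k(\CC\PP^m, q) \le (k-1)m - 1,
\]
so chaining the estimates yields
\[
  N \le (k-1)m - 1 + (k-1) = (k-1)(m+1) - 1,
\]
as required. There is no real obstacle here beyond bookkeeping: the hard work has already been carried out in Theorem~\ref{thm_k_regular_to_space_of_dim_same_as_areole} and Lemma~\ref{lem_bounds_on_dim_HilbPSm}; the corollary is simply the specialisation to $X = \CC\PP^m$ embedded by a sufficiently high Veronese.
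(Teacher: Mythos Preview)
Your proof is correct and follows essentially the same approach as the paper's own proof: invoke Lemma~\ref{lem_v_d_is_k_regular} to obtain a strongly projectively $k$-regular Veronese map, apply Theorem~\ref{thm_k_regular_to_space_of_dim_same_as_areole} to get $N = \dim \gota_k(v_d(\CC\PP^m), p)$, bound this via~\eqref{equ_bound_on_dimension_of_areole_vs_smoothable}, and finish with Lemma~\ref{lem_bounds_on_dim_HilbPSm}. Your additional remarks---the explicit identification $\HilbPSm_k(v_d(\CC\PP^m), p) \simeq \HilbPSm_k(\CC\PP^m, q)$ and the independence of the chosen point---are helpful clarifications that the paper leaves implicit.
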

\begin{proof}
   By Lemma~\ref{lem_v_d_is_k_regular}, there exists a strongly projectively $k$-regular map,
      thus we may apply Theorem~\ref{thm_k_regular_to_space_of_dim_same_as_areole}:
      there exists an affinely $k$-regular continuous map $\CC^m \to \CC^{N}$,
      where $N=\dim \gota_k(v_d(\CC\PP^m), p)$.
   We also have $\dim \gota_k(v_d(\CC\PP^m), p) \le \dim \HilbPSm_k(\CC\PP^m,p) + k-1$
      by Inequality~\eqref{equ_bound_on_dimension_of_areole_vs_smoothable},
   and $\dim \HilbPSm_k(\CC\PP^m,p) + k-1 \le (k-1)(m+1) -1$ by Lemma~\ref{lem_bounds_on_dim_HilbPSm}.
\end{proof}

Next corollary proves Theorem~\ref{thm_bound_by_gorenstein_hilbert_scheme_intro} in the complex case.
\begin{cor}\label{cor_examples_of_k_regular_maps_over_C_from_gorenstein}
    Suppose $k\ge 2$. There exists an affinely $k$-regular continuous map $\CC^m \to \CC^N$,
       for some $N \le \max\set{\dim \HilbPGor_{i}(X,p) + i-1 \mid 1 \le i \le k}$.
\end{cor}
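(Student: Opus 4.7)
The plan is to mimic the proof of Corollary~\ref{cor_examples_of_k_regular_maps_over_C}, but instead of invoking the bound \eqref{equ_bound_on_dimension_of_areole_vs_smoothable} on the areole via $\HilbPSm$, I will invoke the Gorenstein bound \eqref{equ_bound_on_dimension_of_areole_vs_gorenstein}. No new machinery is needed; this is essentially a repackaging.

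First, fix any integer $d\ge k-1$ and consider the Veronese embedding $v_d\colon \CC\PP^m \to \CC\PP^{\binom{m+d}{m}-1}$. By Lemma~\ref{lem_v_d_is_k_regular}, $v_d$ is strongly projectively $k$-regular. Let $X = v_d(\CC\PP^m)$; since $v_d$ is an isomorphism onto its image, $X$ is smooth and for any choice of point $p \in X$ we may apply Theorem~\ref{thm_k_regular_to_space_of_dim_same_as_areole}. This yields an affinely $k$-regular continuous map $\CC^m \to \CC^{N'}$ with
\[
   N' = \dim \gota_k(X, p).
\]

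Next, apply Inequality~\eqref{equ_bound_on_dimension_of_areole_vs_gorenstein} to bound the areole in terms of the Gorenstein punctual Hilbert scheme:
\[
   \dim \gota_k(X, p) \le \max\set{\dim \HilbPGor_{i}(X,p) + i-1 \mid 1 \le i \le k}.
\]
Combining with the previous step immediately gives an affinely $k$-regular continuous map $\CC^m \to \CC^N$ for some $N$ not exceeding the claimed maximum. Since $\HilbPGor_i(X,p)$ depends only on an analytic neighbourhood of $p$ in the smooth variety $X$ (see the end of Subsection~\ref{sect_punctual}), one may freely identify $\dim \HilbPGor_i(X,p)$ with $\dim \HilbPGor_i(\CC\PP^m,q)$ or with $\dim \HilbPGor_i(\CC^m,0)$, depending on what is most convenient for the applications.

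There is essentially no obstacle here beyond assembling the ingredients in the right order; the only subtlety worth checking is the identification of the local Hilbert schemes, which is just the observation that smoothness and étale-locality of $X$ at $p$ make the quantity $\dim \HilbPGor_i(X,p)$ independent of the embedding used. The genuine content --- the Veronese being strongly regular, the reduction to the areole, and the Gorenstein bound on the areole --- has already been absorbed into Lemma~\ref{lem_v_d_is_k_regular}, Theorem~\ref{thm_k_regular_to_space_of_dim_same_as_areole}, and Inequality~\eqref{equ_bound_on_dimension_of_areole_vs_gorenstein}, respectively.
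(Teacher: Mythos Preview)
Your proof is correct and follows essentially the same approach as the paper: apply Theorem~\ref{thm_k_regular_to_space_of_dim_same_as_areole} to the Veronese embedding (which is strongly projectively $k$-regular by Lemma~\ref{lem_v_d_is_k_regular}) and then bound the resulting areole dimension via Inequality~\eqref{equ_bound_on_dimension_of_areole_vs_gorenstein}. The paper's own proof is a two-line version of exactly this argument; your additional remark identifying $\dim \HilbPGor_i(X,p)$ with its value on $\CC\PP^m$ is a helpful clarification but not logically required for the statement as written.
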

\begin{proof}
    Again, there exists an affinely $k$-regular continuous map $\CC^m \to \CC^{N}$,
      where $N=\dim \gota_k(v_d(\CC\PP^m), p)$.
   We have 
   \[
       \dim \gota_k(v_d(\CC\PP^m), p) \le \max\set{\dim \HilbPGor_{i}(X,p) + i-1 \mid 1 \le i \le k}
   \]
   by Inequality~\eqref{equ_bound_on_dimension_of_areole_vs_gorenstein}.
\end{proof}
The main point of the restriction to the Gorenstein schemes is that
   we have more control on the dimension of $\HilbPGor_{i}(X,p)$.
We show this in Appendix, here we conclude the proof of Theorem~\ref{thm_main_bound_intro_small_k}.
\begin{proof}[Proof of Theorem~\ref{thm_main_bound_intro_small_k}, complex case]
   By Corollary~\ref{cor_examples_of_k_regular_maps_over_C_from_gorenstein} 
       there exists an affinely $k$-regular continuous map $\CC^m \to \CC^N$,
       for $N = \max\set{\dim \HilbPGor_{i}(X,p) + i-1 \mid 1 \le i \le k}$.
   If $m \le 2$, then  $\dim \HilbPGor_{i}(X,p) = (i-1)(m-1)$ by Corollary \ref{ref:alignabledim:cor}
       and Proposition~\ref{ref:briancon:obs}.
   If $k \le 9$, then also $\dim \HilbPGor_{i}(X,p) = (i-1)(m-1)$ by Theorem~\ref{ref:expecteddim:thm}.
   In both cases, $N= (i-1)m$, hence there exists a linearly $k$-regular map $\CC^m \to \CC^{(i-1)m+1}$ by
       Lemma~\ref{lem_going_back_and_forth_between_regularities}.
\end{proof}

\section{Real case and other manifolds}\label{sect_real_case}

\subsection{Real case}
In this section we observe that the constructions are also valid over real numbers.
That is, we prove real versions of Theorems~\ref{thm_main_bound_intro_any_k}, \ref{thm_main_bound_intro_small_k}, \ref{thm_bound_by_hilbert_scheme_intro}, and \ref{thm_bound_by_gorenstein_hilbert_scheme_intro}.

\begin{thm}
   Suppose $k\ge 2$. There exists an affinely $k$-regular continuous map $\RR^m \to \RR^N$,
    \begin{itemize}
      \item for certain $N \le \dim \HilbPSm_k(\CC\PP^m,p) + k-1 \le (k-1)(m+1) -1$ for any $p \in \CC\PP^m$, and
      \item for certain $N \le \max\set{\dim \HilbPGor_{i}(\CC\PP^m,p) + i-1 \mid 1 \le i \le k}$.
    \end{itemize}
\end{thm}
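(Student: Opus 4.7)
The plan is to replay the proof of the complex case with minor modifications to handle the real structure. We would start with the real Veronese embedding $v_d \colon \RR\PP^{m} \to \RR\PP^{N_0}$ with $d \ge k-1$, which is strongly projectively $k$-regular (the field-independent Vandermonde computation underlying Lemma~\ref{lem_v_d_is_k_regular} works over $\RR$ without change). Fix a real point $p_0 \in \RR\PP^{m}$ and set $p = v_d(p_0)$. Consider the complex areole $\gota^{\CC}_k := \gota_k(v_d(\CC\PP^{m}), p) \subset \CC\PP^{N_0}$: since $v_d$ has real (indeed rational) defining equations, $\gota^{\CC}_k$ is cut out by polynomials with real coefficients, so its set of real points $\gota^{\CC}_k(\RR) := \gota^{\CC}_k \cap \RR\PP^{N_0}$ is a compact real semi-algebraic subset of $\RR\PP^{N_0}$ that contains the real spans of all length-$k$ real subschemes of $v_d(\RR\PP^{m})$ supported at $p$.

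The crucial observation is the elementary inequality $\dim_\RR \gota^{\CC}_k(\RR) \le \dim_\CC \gota^{\CC}_k$, valid for any complex variety defined over $\RR$. Combined with Inequalities~\eqref{equ_bound_on_dimension_of_areole_vs_smoothable} and \eqref{equ_bound_on_dimension_of_areole_vs_gorenstein}, this gives $\dim_\RR \gota^{\CC}_k(\RR) \le N$ for each of the two values of $N$ in the statement. A real dimension count on semi-algebraic sets (replacing the complex Bertini step used in the proof of Theorem~\ref{thm_k_regular_to_space_of_dim_same_as_areole}) then produces a real linear subspace $H \subset \RR\PP^{N_0}$ of real codimension $N+1$ that is disjoint from the compact set $\gota^{\CC}_k(\RR)$.

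With these ingredients in hand, the compactness argument of Lemma~\ref{lem_projection_from_H_disjoint_with_areole} applies almost verbatim: disjointness of $H$ from $\gota^{\CC}_k(\RR)$ extends to a tubular neighbourhood, so after shrinking to a sufficiently small real ball $\BB_{\RR}^{m} \subset \RR\PP^{m}$ around $p_0$, the subspace $H$ also avoids the real secant variety $\sigma_k^{\circ,\RR}(v_d(\BB_{\RR}^{m}))$. Projection from $H$ yields a projectively $k$-regular map $v_d(\BB_{\RR}^{m}) \to \RR\PP^{N}$; composing with the homeomorphism $\BB_{\RR}^{m} \simeq \RR^{m}$ and applying Lemma~\ref{lem_going_back_and_forth_between_regularities} produces the desired affinely $k$-regular continuous map $\RR^{m} \to \RR^{N}$. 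The main obstacle is the absence of a direct real analogue of Bertini's theorem; we circumvent it by exploiting the semi-algebraic structure of $\gota^{\CC}_k(\RR)$, after which the remainder of the complex proof, being purely topological and relying only on compactness, transfers without modification.
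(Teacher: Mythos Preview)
Your proposal is correct and follows essentially the same route as the paper. The paper's own proof is terser: it notes that the Veronese is defined over $\RR$, that the real Hilbert scheme (and hence the real areole) has dimension bounded by its complex counterpart via base change~\eqref{equ_dim_Hilb_R_and_Hilb_C}, and concludes that a general centre of projection can be chosen to be real; you make this last step more explicit by passing to the real points of the complex areole and invoking a semi-algebraic dimension count in place of Bertini, but the underlying mechanism is the same.
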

\begin{proof}
   First, let us recall the construction in the complex case.
   We start with the Veronese embedding of projective space, which is, in fact, defined over real numbers.
   Then we project from a general linear subspace.
   Since $\dim \Hilb_k(\CC\PP^m) = \dim \Hilb_k(\RR\PP^m)$ 
      (and analogously for the variants of Hilbert scheme,  see \eqref{equ_dim_Hilb_R_and_Hilb_C}),
      the dimension of the set of $\RR$-rational points in
      $\Hilb_k(\RR\PP^n)$ and its variants is at most the dimension of the complex analogue.
   Similarly, the dimension of $\gota_k(v_d(\RR\PP^m))$ is bounded from above by the dimension in the complex case.
   Therefore as the general linear subspace that is the centre of projection,
      we can pick one which is defined over real numbers.
   Thus the theorem follows.
\end{proof}

\subsection{Regular embeddings of hypersurfaces and low codimension manifolds}

Suppose $M$ is a real manifold of dimension $m$ embedded in $\RR^{m+1}$.
The $k$-regular maps $\RR^{m+1} \to \RR^N$ naturally restrict to $k$-regular maps $M \to \RR^N$.
As an example, one may think of $M = S^m$, the $m$-dimensional sphere.

\begin{exm}
   Let $M$ be as above, and $k \ge 3$,
   Then $M$ admits an affinely $k$-regular embedding into $\RR^N$,
   where
   \[
     N = \begin{cases}
            (m+1)(k-1)    & \text{if  $k \le 9$, or $m=1$} \\
            (m+2)(k-1) -1 & \text{if $10 \le k \le m +2$}\\
            (m+1)k -1 & \text{otherwise.}
         \end{cases}
   \]
\end{exm}

\begin{proof}
   The example restricts the $k$-regular maps from Theorem~\ref{thm_main_bound_intro_small_k} (first case) and
   Theorem~\ref{thm_main_bound_intro_any_k} (second case).
   In the last case, the standard transversality construction, bounding $N$ by the dimension of secant variety
      (see Lemma~\ref{lem_projection_from_H_disjoint_with_secant}, or \cite{kreg6}) is stronger.
\end{proof}

More generally, we may apply the same argument to any $m$-dimensional submanifold $M \subset \RR^{c+m}$ of codimension $c$.
\begin{prop}
   Let $M^m \subset \RR^{c+m}$ be as above, and $k \ge 3$, $c \ge 2$.
   Then $M$ admits an affinely $k$-regular embedding into $\RR^N$,
   where
   \[
     N = \begin{cases}
            (m+c)(k-1)    & \text{if  $k \le \min(9, \frac{m+c}{c-1})$} \\
            (m+c+1)(k-1) -1 & \text{if $10 \le k \le \frac{m +c +1}{c}$}\\
            (m+1)k -1 & \text{otherwise.}
         \end{cases}
   \]
\end{prop}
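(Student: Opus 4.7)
The plan is to prove each of the three regimes by a separate construction and to observe that the piecewise formula for $N$ simply records which construction to use where. The key unifying principle is that any affinely $k$-regular map $\RR^{m+c}\to \RR^N$ restricts to an affinely $k$-regular map $M\to \RR^N$, since $k$-regularity is a condition on images of $k$-tuples of distinct points and passing to a subset of the domain can only make it easier. Thus in the first two cases it suffices to construct a $k$-regular map of the ambient space and restrict.

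In the first case $k \le \min\bigl(9,\tfrac{m+c}{c-1}\bigr)$ the relevant hypothesis is $k\le 9$, and I would apply Theorem~\ref{thm_main_bound_intro_small_k} to the ambient space $\RR^{m+c}$, obtaining a linearly $k$-regular map $\RR^{m+c}\to \RR^{(m+c)(k-1)+1}$. Converting to affine regularity via Lemma~\ref{lem_going_back_and_forth_between_regularities} (first linear to projective, losing one dimension, then projective to affine after choosing a general hyperplane at infinity disjoint from the image) produces an affinely $k$-regular map $\RR^{m+c}\to \RR^{(m+c)(k-1)}$, which restricts to the required embedding of $M$.

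The second case $10\le k\le \tfrac{m+c+1}{c}$ is handled identically with Theorem~\ref{thm_main_bound_intro_any_k} in place of Theorem~\ref{thm_main_bound_intro_small_k}. That theorem supplies a linearly $k$-regular map $\RR^{m+c}\to \RR^{(m+c+1)(k-1)}$, which the same linear-to-affine conversion turns into an affinely $k$-regular map $\RR^{m+c}\to \RR^{(m+c+1)(k-1)-1}$; restricting to $M$ gives the desired embedding. The third case is the residual one, in which the above ambient constructions no longer improve upon the standard transversality bound: following the discussion after Lemma~\ref{lem_projection_from_H_disjoint_with_secant}, one begins with any $k$-regular embedding of $M$ into a sufficiently high-dimensional ambient space and iteratively projects from points off the open secant variety $\sigma_k^{\circ}(f(M))$, whose dimension is bounded by $km+k-1$. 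This descends to an affinely $k$-regular map $M\to \RR^{(m+1)k-1}$, matching the third formula.

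The main obstacle is not mathematical but bookkeeping. One must verify that the three constructions fit together to cover every $(k,m,c)$ with $k\ge 3$ and $c\ge 2$, and that the numerical thresholds in the statement (notably $k\le \tfrac{m+c}{c-1}$ and $k\le \tfrac{m+c+1}{c}$) indeed identify regimes in which the ambient-space construction yields the stated $N$; equivalently, the threshold $k\le \tfrac{m+c+1}{c}$ is precisely $(m+c+1)(k-1)-1\le (m+1)k-1$, and analogously for Case 1. Since the proposition only asserts existence of an embedding into some $\RR^N$, rather than optimality, no extra argument is needed in overlap regions: one simply chooses whichever of the three constructions is advertised by the piecewise formula.
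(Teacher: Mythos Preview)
Your approach is correct and matches the paper's: the proposition is stated immediately after the $c=1$ Example with the remark ``we may apply the same argument,'' and that argument is exactly what you spell out---restrict the ambient $k$-regular maps of Theorems~\ref{thm_main_bound_intro_small_k} and~\ref{thm_main_bound_intro_any_k} in the first two cases, and use the standard secant-variety transversality bound (Lemma~\ref{lem_projection_from_H_disjoint_with_secant}) in the third. One small simplification: rather than converting linear $\to$ projective $\to$ affine, you can cite the affinely $k$-regular maps $\RR^{m+c}\to\RR^{(m+c)(k-1)}$ and $\RR^{m+c}\to\RR^{(m+c+1)(k-1)-1}$ directly, since these are what the proofs of Theorems~\ref{thm_main_bound_intro_small_k} and~\ref{thm_main_bound_intro_any_k} actually construct before appending the constant coordinate (see Corollaries~\ref{cor_examples_of_k_regular_maps_over_C} and~\ref{cor_examples_of_k_regular_maps_over_C_from_gorenstein} and their real versions in Section~\ref{sect_real_case}).
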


\begin{exm}
   Let $M^m \subset \RR^{m+2}$, and $k \ge 3$.
   Then $M$ admits an affinely $k$-regular embedding into $\RR^N$,
   where
   \[
     N = \begin{cases}
            (m+2)(k-1)    & \text{if  $k \le \min(9, m+2)$} \\
            (m+3)(k-1) -1 & \text{if $10 \le k \le \frac{m +2 +1}{2}$}\\
            (m+1)k -1 & \text{otherwise.}
         \end{cases}
   \]
\end{exm}

\subsection{Globally \texorpdfstring{$k$}{k}-regular maps from a projective space}

Suppose $\FF=\CC$.
Arbitrary algebraic morphisms between two projective spaces are obtained as projections from the Veronese embedding.
That is, suppose $f \colon \PP V\to \CC\PP^N$ is an algebraic morphism explicitly given by:
\[
 [x] \mapsto [f_0(x), f_1(x), \dotsc, f_N(x)]
\]
with $f_i$ homogeneous polynomials of degree $d$, and suppose the linear subspace $L \subset S^dV^*$
  spanned by $f_i$ has dimension $N+1$ (i.e.~the polynomials $f_i$ are linearly independent).
Then $f = \pi_L \circ v_d$, where $v_d$ is the degree $d$ Veronese map $v_d\colon \PP V \to \PP(S^d V)$,
  and $\pi_L \colon \PP(S^d V) \to \CC\PP^N$ is the linear projection with the kernel $L^{\perp} \subset S^d V$.

Note that our assumption $\dim L = N+1$ is not very restrictive.
If $\dim L < N+1$, then the image of $f$ is contained in a linear subspace, and $f = \iota \circ \pi_L \circ v_d$,
  where $\iota \colon \PP L^* \to \CC\PP^N$ is a linear embedding.

\begin{lema}\label{lem_k_regular_has_degree_at_least_k_minus_1}
  Suppose  an algebraic morphism $f\colon \CC\PP^m \to \CC\PP^N$ as above is projectively $k$-regular.
  If $m>0$, then the degree $d$ of polynomials $f_i$ satisfies $d\ge k-1$.
\end{lema}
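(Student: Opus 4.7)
The plan is to exploit the factorization $f = \pi_L \circ v_d$ and restrict attention to the image of a single line under $v_d$.

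Since $m > 0$, I can choose a projective line $\ell \cong \CC\PP^1 \subset \CC\PP^m = \PP V$. The restriction $v_d|_\ell$ is, up to linear change of coordinates, the standard degree-$d$ Veronese embedding $\CC\PP^1 \to \PP(S^d\CC^2)$; in other words, $v_d(\ell)$ is a rational normal curve of degree $d$. A rational normal curve of degree $d$ spans a projective subspace of dimension exactly $d$ inside $\PP(S^d V)$. In particular, for any finite collection of points on $\ell$, their images under $v_d$ lie in this ambient $\PP^d$.

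Now pick any $k$ distinct points $p_1, \dotsc, p_k \in \ell$. By the previous paragraph, $v_d(p_1), \dotsc, v_d(p_k)$ are contained in a projective subspace of dimension at most $d$, so
\[
   \dim \langle v_d(p_1), \dotsc, v_d(p_k) \rangle \le d.
\]
Applying the linear projection $\pi_L$ cannot increase the dimension of the linear span, so
\[
   \dim \langle f(p_1), \dotsc, f(p_k) \rangle \le d.
\]
On the other hand, projective $k$-regularity of $f$ forces the left-hand side to equal $k-1$. Comparing gives $k - 1 \le d$, which is the claim.

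The argument is essentially forced once one observes the Veronese-of-a-line is a rational normal curve; the only thing to double-check is that $\pi_L$ is defined at the relevant points (automatic, since $f$ is a morphism, so $v_d(\CC\PP^m) \cap \PP(L^\perp) = \varnothing$) and that linear projections do not enlarge linear spans. There is no real obstacle; the technical content is entirely packaged in the classical fact about rational normal curves.
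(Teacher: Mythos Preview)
Your proof is correct and follows essentially the same approach as the paper: restrict to a line $\ell \subset \CC\PP^m$, observe that its image spans a projective space of dimension at most $d$, and conclude by comparing with the $(k-1)$-dimensional span forced by $k$-regularity. The paper phrases this directly for $f(\ell)$ rather than passing explicitly through $v_d$ and $\pi_L$, but the argument is the same.
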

\begin{proof}
  Pick $k$ distinct points on a line in $\CC\PP^1 \subset \CC\PP^m$.
  Their images are contained in the image of the line, and the linear span of the image of the line is at most $d$-dimensional.
  The linear span of these $k$ points is $k-1$ dimensional, hence $d \ge k-1$.
\end{proof}

The dimensions of the secant varieties of the Veronese embedding are explicitly known, due to a Alexander-Hirschowitz theorem.
\tw[Alexander-Hirschowitz \cite{AH,OttAH}]\label{thm:AH}
Consider the $k$-th secant variety $\sigma_k(v_d(\CC\PP^m))$ of the $d$-th Veronese embedding of the projective space. The dimension of the secant variety is always the expected one, equal to $\min(km+k-1, {\binom{m+d}{d}}-1)$, except the following cases:
\renewcommand{\theenumi}{(\alph{enumi})}
\renewcommand{\labelenumi}{\theenumi}
\begin{enumerate}
\item\label{item_case_AH_d=2} $d=2$, $2\leq k\leq m$;
\item\label{item_case_AH_m=2_d=4_k=5}  $n=2$, $d=4$, $k=5$;
\item $m=3$, $d=4$, $k=9$;
\item $m=4$, $d=3$, $k=7$;
\item $m=4$, $d=4$, $k=14$.
\end{enumerate}
\ktw

In the context of $k$-regularity and in the view of Lemma~\ref{lem_k_regular_has_degree_at_least_k_minus_1},
  we can assume $d+1 \ge k > 2$. (Existence of $2$-regular maps $\CC\PP^m \to \CC\PP^N$ is clear whenever $N \ge m$.)
With this restriction, the theorem implies that $\dim \sigma_k(v_d(\CC\PP^m)) = km+k-1$, unless:
\renewcommand{\theenumi}{(\roman{enumi})}
\renewcommand{\labelenumi}{\theenumi}
\begin{enumerate}
  \item\label{item_case_restricted_m=1} If $m=1$, $\frac{d+1}{2} < k \le d+1$, then $\dim \sigma_k(v_d(\CC\PP^1)) = d$.
  \item\label{item_case_restricted_m=2_d=2_k=3} If $m=2$, $d=3$, $k=4$, then $\dim \sigma_4(v_3(\CC\PP^2)) = 9$.
  \item\label{item_case_restricted_m=2_d=4_k=5} If $m=2$, $d=4$, $k=5$, then $\dim \sigma_5(v_4(\CC\PP^2)) = 13$.
  \item\label{item_case_restricted_d=2} If $d=2$, $k = 3$, $m \ge 2$,
                                                                   then $\dim \sigma_3(v_2(\CC\PP^m)) = 3m-1$.
\end{enumerate}
Case \ref{item_case_restricted_m=2_d=4_k=5} is the exceptional case \ref{item_case_AH_m=2_d=4_k=5} in Theorem~\ref{thm:AH}.
Case \ref{item_case_restricted_d=2} with $m \ge 3$ is the quadric case \ref{item_case_AH_d=2}.
The remaining cases arise from solving the inequality $km + k - 1 > {\binom{m + d}{d}} - 1$, together with $d-1 \ge k > 2$.

\wn\label{kreg}
   For $k>2$, there exists a strongly projectively $k$-regular algebraic morphism $\CC\PP^m\rightarrow\CC\PP^N$
     if and only if either $N \ge km+k-1$ or:
\begin{enumerate}
   \item $m=1$ and $N \ge k-1$, or
   \item $m=2$, $k=4$, and $N \ge 9$, or
   \item $m=2$, $k=5$, and $N \ge 13$, or
   \item $k=3$, and $N \ge 3m-1$.
\end{enumerate}
\kwn
\dow
First we prove the existence.
Let $d=k-1$ and consider the Veronese embedding $v_d \colon \CC\PP^m \to \CC\PP^{\binom{m+d}{m}-1}$.
If $N \ge \binom{m+d}{m}-1$ then let $f = v_d$, possibly followed by a linear embedding. It is $k$-regular.
Hence from now on assume $N < \binom{m+d}{m}-1$.

By Theorem~\ref{thm:AH} and the following discussion, $N \ge \dim \sigma_k(v_d(\CC\PP^m))$.
Thus a general linear subspace $L \subset \CC\PP^{\binom{m+d}{m}-1}$ of codimension $N-1$ avoids $\sigma_k(v_d(\CC\PP^m))$.
In particular, $L$ does not intersect the linear span of any smoothable scheme of length at most $k$.

Consider the projection $\pi_L\colon \CC\PP^{\binom{m+d}{m}-1} \dashrightarrow \CC\PP^N$ with the centre $L$.
Since $k\ge 3$, the linear space $L$ avoids the second secant variety,
    and $f=\pi_L \circ v_d$ is an embedding of algebraic variety.
    Thus all subschemes of the image are images of schemes in the domain.
Hence the linear span of any subscheme $f(R) \subset f(\CC\PP^{m})$ is the image of $\langle v_d(R)\rangle$ under the projection $\pi_L$. Now if $R$ is a smoothable scheme of length $k$, then $\langle v_d(R)\rangle \cap L = \emptyset$, and
  $\dim \langle f(R)\rangle = \dim \langle v_d(R)\rangle = k-1$, i.e.~$f$ is strongly $k$-regular.

Now assume that $f\colon \CC\PP^m\rightarrow\CC\PP^N$ is a strongly projectively $k$-regular algebraic morphism.
Then $f = \pi_L \circ v_d$ as in the introductory paragraphs of this subsection.
By Lemma~\ref{lem_k_regular_has_degree_at_least_k_minus_1}, we have $d \ge k-1$.
We can ignore the following linear embedding by reducing $N$ if necessary.
We claim that $L$ must avoid the $\sigma_k(v_d(\CC\PP^m))$. By Lemma~\ref{lem_map_to_grassmannian_and_no_closure},
this is equivalent to say that $L$ does not intersect any linear span of
  $\dim \langle v_d(R)\rangle$ for smoothable schemes $R \subset \CC\PP^m$ of length $k$.
Otherwise, $\dim \langle f(R)\rangle < k-1$, contrary to our assumption that $f$ is strongly (projectively) $k$-regular.

Thus $L$ does not intersect $\sigma_k(v_d(\CC\PP^m))$, hence $\dim L + \dim \sigma_k(v_d(\CC\PP^m)) < N$ (with a convention that $\dim L =-1$ if and only if $L = \emptyset$). In particular, $\dim \sigma_k(v_d(\CC\PP^m)) \ge N$, which is equivalent to the claim of the corollary.
\kdow

\appendix
\section{Alignable schemes and dimension of the locus of finite Gorenstein schemes}

In this appendix we analyse the Hilbert scheme $\HilbPGor_k(\FF\PP^m, p)$. As
mentioned in Section~\ref{sect_punctual}, this is equivalent to the study of
$\HilbPGor_k(X, p)$ for any projective manifold $X$.

In Corollary~\ref{ref:alignabledim:cor} we prove a lower bound $(k-1)(\dim X -
1)$ for its dimension and in Theorem~\ref{ref:expecteddim:thm} we prove that
for $k\leq 9$ the dimension is equal to the lower bound.

Analysing the Hilbert scheme, as below, is technical and requires some knowledge of commutative algebra,
especially the theory of Artin Gorenstein algebras, see \cite{jelisiejew_MSc} for an overview and references.

In this appendix, as in the whole article, $\FF$ denotes either $\RR$ or
$\CC$. Many results of the appendix are true in greater  generality. The
exception is Brian\c{c}on's alignability result
(Proposition~\ref{ref:briancon:obs}), formulated only for complex numbers.
For the most part of the appendix we additionally assume that $\FF$ is
algebraically closed ($\FF = \CC$). We relax this assumption in
Theorem~\ref{ref:expecteddim:thm}.

\subsection{Generalities on deformations of finite algebras}\label{sect_generalities_on_deformations}

One of the fundamental properties of the Hilbert scheme is its invariance under base change 
  \cite[5.1.5(5), p.~112]{fantechi_et_al_fundamental_ag}. 
In particular, if $\FF_1 \subset \FF_2$ is a field extension (such as $\RR
\subset \CC$) and $p\in \FF_1\PP^m$ is an $\FF_1$-point, then:
\begin{equation*}
  \begin{aligned}
    \Hilb_k(\FF_2 \PP^m) &=  \Hilb_k(\FF_1 \PP^m) \times_{\FF_1} \Spec \FF_2,\\
    \HilbP_k(\FF_2 \PP^m, p) &=  \HilbP_k(\FF_1 \PP^m, p) \times_{\FF_1} \Spec \FF_2,\\
    \HilbGor_k(\FF_2 \PP^m) &=  \HilbGor_k(\FF_1 \PP^m) \times_{\FF_1} \Spec \FF_2,\\
    \HilbPGor_k(\FF_2 \PP^m, p) &=  \HilbPGor_k(\FF_1 \PP^m, p) \times_{\FF_1} \Spec \FF_2.\\
  \end{aligned}
\end{equation*}
In this article we are interested in  the dimension of the Hilbert scheme,
thus we have for every $\RR$-point $p\in \RR\PP^m$:
\begin{equation}\label{equ_dim_Hilb_R_and_Hilb_C}
  \begin{aligned}
    \dim \Hilb_k(\CC\PP^m) &=  \dim \Hilb_k(\RR \PP^m),\\
    \dim \HilbP_k(\CC\PP^m,p) &=  \dim \HilbP_k(\RR \PP^m,p),\\
    \dim \HilbGor_k(\CC\PP^m) &=  \dim \HilbGor_k(\RR \PP^m),\\
    \dim \HilbPGor_k(\CC\PP^m,p) &=  \dim \HilbPGor_k(\RR \PP^m,p).\\
  \end{aligned}
\end{equation}
Therefore it is harmless for us to assume that the base field $\FF$ is algebraically closed, 
   even though this assumption is eventually relaxed in the main theorem. 

We need some basic notions from deformation theory.
Below  an \textbf{Artin algebra} $A$ is a finite dimensional $\FF$-algebra and
$\dim_{\FF} A$ is called the \textbf{length} of $A$ and denoted $\len A$.
\begin{df}
    A \textbf{family} of Artin algebras is a finite flat morphism $\Spec C\to \Spec
    B$, where $B$ is an integral domain and a finitely generated $\FF$-algebra.
    We say that an algebra $A$ is a fibre of this family if there is a closed point
    $b\in \Spec B$ such that $\Spec A$ is isomorphic to the fibre $\Spec C_b$,
      i.e.~if $b$ corresponds to the maximal ideal $\mathfrak{b} \subset B$, then $A \simeq C\otimes_{B} B/\mathfrak{b}$.
\end{df}

\def\dissk#1{\mathbb{D}^{#1}}%
Let $\dissk{m}$ denote the spectrum of power series ring in $m$ variables. It
is isomorphic to the formal neighbourhood of a point $p\in \FF\PP^m$.
Informally, one may think of $\dissk{m}$ as a (very small) analytic neighbourhood.
In particular, to embed a finite scheme $R$ into $\FF\PP^m$ as a scheme supported
  at a pre-chosen point $p$ is the same as to
embed $R$ into $\dissk{m}$.

\def\embdim{\operatorname{embdim}}%
\begin{df}\label{df:embdim}
    Let $R = \Spec A$ be a punctual scheme corresponding to a local Artin
    algebra $A$ with maximal ideal $\mathfrak{m}$.
    The \textbf{embedding dimension} of $R$, is $\dim_\FF
    \mathfrak{m}/\mathfrak{m}^2$. It is denoted $\embdim R$.
\end{df}

The importance of embedding dimension comes from the fact that $\embdim R\leq
e$ if and only if $R$ has an embedding into $\FF\PP^e$ if and only if $R$ has
an embedding into $\dissk{e}$.

\subsection{Invariance of codimension and alignable subschemes}

Now we will analyse the following question.
\begin{question}
Let $\Spec A  \simeq R \subseteq \FF\PP^m$ be a finite scheme of length $k$ supported at $p$. What
is the dimension of the family of schemes isomorphic to $R$ in
$\HilbP(\FF\PP^m, p)$?
\end{question}
To give a scheme $R' \subseteq \FF\PP^m$
supported at $p$ and isomorphic to $R$ is the same as to give $R \simeq R' \subseteq
\dissk{m}$~i.e.~a surjective homomorphism from a power series ring
$\FF[[\alpha_1, \ldots ,\alpha_m]]$ onto the algebra $A$. Such a homomorphism is given by
sending each $\alpha_i$ to an element of the maximal ideal $\mathfrak{m}_A$ of $A$
and it is surjective
for a general such choice. Thus we have $m\cdot \dim_{\FF}\mathfrak{m} = m(k-1)$
parameters. Moreover, two such homomorphisms are equivalent if they have the
same kernel, i.e.~differ by
an automorphism of $A$. Thus the dimension is equal to $D := m(k-1) -
\dim\operatorname{Aut}(A)$. In particular, the ``codimension'' $m(k-1) - D$ is
independent of $m$.

\def\familyR{\mathcal{R}}
The following Proposition~\ref{ref:codiminv} generalises the above
considerations to any natural family 
 $\mathcal{R} \subset \HilbP_k(\FF\PP^e, p)$.
We need one technical definition.
Let $\familyR \subseteq \HilbP_k(\FF\PP^e, p)$. 
We say that $\familyR$ is \textbf{closed under isomorphisms} if every $R'\in \HilbP_k(\FF\PP^e, p)$ isomorphic to a scheme $R \in \familyR$
    belongs to $\familyR$.
In other words, if $i, i' \colon R \hookrightarrow \FF \PP^e$ are two embeddings of a finite scheme $R$ with support at $p \in \FF\PP^e$ and $i(R)$ is in the family $\familyR$, 
    then also $i'(R)$ is in $\familyR$. 
We will also use the flag Hilbert schemes (see~\cite[IX.7,
p.~48]{Arbarello__Geometry_of_algebraic_curves} or
\cite[Section 4.5]{Sernesi__Deformations}) and multigraded Hilbert schemes
(see~\cite{Haiman__Multigraded_Hilbert_schemes}).
It should be noted, that we
only consider these constructions for a scheme \emph{finite} over $\FF$,
a very special case.
\def\familyRe{\mathcal{R}^e}
\def\familyRm{\mathcal{R}^m}
\def\familyRem{\mathcal{R}^{e,m}}
\begin{obs}[invariance of codimension]\label{ref:codiminv}
    Let $m\geq e$ and $\familyRe \subseteq \HilbP_k(\FF\PP^e, p_0)$ be a
    Zariski-constructible subset closed under isomorphisms.
    Consider the subset $\familyRm \subseteq \HilbP_k(\FF\PP^m, p)$ of all
    schemes isomorphic to a member of $\familyRe$.
    Then the family $\familyRm$ is a Zariski-constructible subset of
    $\HilbP_k(\FF\PP^m, p)$ and its dimension satisfies
    \[
        (k - 1)m - \dim \familyRm = (k-1)e - \dim \familyRe.
    \]
\end{obs}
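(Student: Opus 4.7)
The plan is to introduce an auxiliary incidence scheme parametrizing embeddings of a common abstract scheme into both $\dissk{e}$ and $\dissk{m}$, and then compute its dimension in two ways. First I would reduce to the algebraically closed case via \eqref{equ_dim_Hilb_R_and_Hilb_C}, since the subset $\familyRe$, its dimension, the property of being closed under isomorphisms, and the dimension of $\familyRm$ are all preserved by the base change $\RR \subset \CC$.

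Consider the incidence
\[
    \familyRem := \{(R_e, R_m, \phi) \;:\; R_e \in \familyRe,\; R_m \in \HilbP_k(\FF\PP^m, p),\; \phi \colon R_m \xrightarrow{\sim} R_e\},
\]
which is a constructible subset of the Isom scheme of the two universal finite flat families over $\familyRe \times \HilbP_k(\FF\PP^m, p)$ (the Isom scheme is representable for finite flat families). It carries two natural projections which I would analyze in turn.

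Fix $R_e \in \familyRe$. To give a pair $(R_m, \phi)$ over $R_e$ is the same as to give a surjection of $\FF$-algebras $\FF[[x_1, \ldots, x_m]] \twoheadrightarrow \mathcal{O}_{R_e}$: one direction sends $(R_m, \phi)$ to $(\phi^\#)^{-1}$ composed with the quotient $\FF[[x_1, \ldots, x_m]] \twoheadrightarrow \mathcal{O}_{R_m}$, and conversely a surjection $\psi$ produces $R_m = V(\ker \psi) \subseteq \dissk{m}$ with the induced isomorphism to $R_e$. By Nakayama, a homomorphism is determined by the images of the $x_i$ in $\mathfrak{m}_{R_e}$ and is surjective exactly when those images span $\mathfrak{m}_{R_e}/\mathfrak{m}_{R_e}^2$, which is possible since $m \geq e \geq \embdim R_e$. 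The set of such tuples is a non-empty Zariski-open subset of $\mathfrak{m}_{R_e}^m \simeq \FF^{m(k-1)}$ of dimension $m(k-1)$. All fibers of the projection $\familyRem \to \familyRe$ thus have dimension $m(k-1)$, giving
\[
    \dim \familyRem = \dim \familyRe + m(k-1).
\]

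Now consider the projection $\familyRem \to \HilbP_k(\FF\PP^m, p)$. Its fiber over $R_m$ is non-empty iff $R_m$ is isomorphic to some member of $\familyRe$, and by the hypothesis that $\familyRe$ is closed under isomorphisms this condition depends only on the isomorphism class of $R_m$ and holds precisely when $R_m \in \familyRm$. Hence the image of this projection is exactly $\familyRm$, which is therefore constructible by Chevalley's theorem. The fiber over any such $R_m$ is, by the same argument, a non-empty Zariski-open subset of $\mathfrak{m}_{R_m}^e$ of dimension $e(k-1)$. Therefore $\dim \familyRem = \dim \familyRm + e(k-1)$, and equating the two counts yields $(k-1)m - \dim \familyRm = (k-1)e - \dim \familyRe$. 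The principal technical obstacle is the construction of $\familyRem$ as a scheme via the Isom scheme of the universal families, together with the verification that both projections are surjective onto their advertised images with constant fiber dimension; once this is in place, the result is a clean dimension count.
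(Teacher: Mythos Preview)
Your argument is correct and takes a genuinely different route from the paper's.  The paper works with the flag Hilbert scheme $\HilbFlag$ parametrising chains $R \subseteq \dissktrimmed{e} \subseteq \dissktrimmed{m}$ (with $\dissktrimmed{n}$ the thickened point $\Spec \FF[[\alpha]]/\mathfrak{m}^k$), projects to the space of embeddings $\dissktrimmed{e} \subseteq \dissktrimmed{m}$ on one side and to $\HilbP_k(\dissktrimmed{m})$ on the other, and then uses that the automorphism group of $\dissktrimmed{m}$ acts transitively on the first target to identify the fibres of $\pi_1$ with $\familyRe$.  The two fibre dimensions the paper computes are $(m-e)M$ and $(m-e)(M-k+1)$, where $M = \dim_{\FF}(\beta_1,\dots,\beta_e)$; these are asymmetric in $e$ and $m$ and the common auxiliary quantity $M$ cancels at the end.

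Your incidence is instead the Isom scheme of the two universal families, recording an explicit isomorphism $\phi$ rather than an intermediate copy of $\dissktrimmed{e}$.  This makes the two projections completely symmetric: each fibre is an open subset of $\mathfrak{m}_{R}^{n}$ for the relevant $n\in\{e,m\}$, of dimension $n(k-1)$, and no group action or auxiliary parameter $M$ is needed.  The price is that you must invoke representability of Isom for finite flat families and check that the fibre over $R_m\in\familyRm$ really lands in $\familyRe$ (which you do, via closure under isomorphisms and $\embdim R_m\le e$).  Both approaches establish constructibility of $\familyRm$ via Chevalley; yours is arguably cleaner, while the paper's has the advantage of staying entirely within Hilbert-scheme language without introducing the Isom functor.
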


\def\dissktrimmed#1{\underline{\mathbb{D}}^{#1}}%
\begin{proof}
    For technical reasons (to assure existence of Hilbert flag schemes) we
    consider $\dissktrimmed{n} := \Spec \FF[[\alpha_1, \ldots ,\alpha_n]]/(\alpha_1,
    \ldots ,\alpha_n)^{k}$ rather than $\dissk{n}$. Let $R$ be a finite scheme of
    length $k$. To give an embedding $R \subseteq \FF\PP^n$ with support $p$
    is the same as to give an embedding $R \subseteq \dissk{n}$ and it is the same as
    to give an embedding $R \subseteq \dissktrimmed{n}$. Therefore
    $\Hilb_k(\FF\PP^n, p)  \simeq \Hilb_k(\dissktrimmed{n})$ for every $n$ and
    $p$.

    \def\HilbFlag{\operatorname{HilbFlag}}%
    \def\Hilbdiske{\Hilb_k(\dissktrimmed{e})}%
    \def\Hilbdiskindisk{\Hilb(\dissktrimmed{e} \subseteq \dissktrimmed{m})}%
    \def\Hilbdiskn{\Hilb_k(\dissktrimmed{m})}%
    Consider the multigraded flag Hilbert scheme $\HilbFlag$
    parametrising pairs of closed immersions $R \subseteq \dissktrimmed{e} \subseteq
    \dissktrimmed{m}$. It has natural
    projections $\pi_1$, $\pi_2$, mapping $R \subseteq \dissktrimmed{e} \subseteq
    \dissktrimmed{m}$ to $\dissktrimmed{e} \subseteq \dissktrimmed{m}$ and $R \subseteq
    \dissktrimmed{m}$ respectively, see diagram below.
    \[
        \begin{tikzcd}
            {} & \arrow{ld}{\pi_1}\HilbFlag{} \arrow{rd}{\pi_2} & \\
            \Hilbdiskindisk & & \Hilbdiskn
        \end{tikzcd}
    \]
    Note that $\pi_i$ are proper.
    We will now prove that $\familyRm$ is Zariski-constructible.
    Consider the automorphism group $G$ of $\dissktrimmed{m}$. It acts
    naturally on $\Hilbdiskindisk$  and $\HilbFlag$, making the morphism $\pi_1$
    equivariant. 
    The ideal of a $\dissktrimmed{e} \subseteq \dissktrimmed{m}$ is given by
    $m-e$ order one elements of the power series ring, linearly independent modulo
    higher order operators.  Therefore the action
    of $G$ on $\Hilbdiskindisk$ is transitive: for any two $(m-e)$-tuples as
    above there exists an automorphism of $\dissktrimmed{m}$
    mapping elements of the first tuple to the elements of the other tuple.

    Fix an embedding $\dissktrimmed{e} \subseteq
    \dissktrimmed{m}$, and hence an inclusion $i:\familyRe \hookrightarrow
    \HilbFlag$. Let $\familyRem = G\cdot i(\familyRe)$, then $\familyRm =
    \pi_2(\familyRem)$ and so it is Zariski-constructible. Note that
    $\familyRem = \pi_2^{-1}(\familyRm)$.

    It remains to compute the dimension of $\familyRm$. Let us redraw the
    previous diagram:
    \[
        \begin{tikzcd}
            {} & \arrow{ld}{\pi_1|_{\familyRem}}\familyRem \arrow{rd}{\pi_2|_{\familyRem}} & \\
            \Hilbdiskindisk & & \familyRm
        \end{tikzcd}
    \]
    Note that $\pi_1|_{\familyRem}$ is surjective because $\pi_1$ is $G$-equivariant 
       and $G$ acts transitively on the scheme $\Hilbdiskindisk$. 
    Furthermore, $\pi_1|_{\familyRem}$ has
        fibres isomorphic to $\familyRe$ because $\familyRe$ is closed under isomorphisms.
    Thus we obtain 
    \[
        \dim \familyRem = \dim \familyRe + \dim \Hilbdiskindisk.
    \]
    It remains to calculate the dimensions of $\Hilbdiskindisk$ and the fibre
    of $\pi_2$.
    An immersion $\varphi:\dissktrimmed{e} \subseteq
    \dissktrimmed{m}$ corresponds to a surjection $\varphi^{*}:\FF[[\alpha_1,
    \ldots ,\alpha_m]]/(\alpha_1, \ldots ,\alpha_m)^{k}\to \FF[[\beta_1, \ldots ,\beta_e]]/(\beta_1,
    \ldots ,\beta_e)^{k}$.
    Such surjective morphisms are parametrised
    by the images of generators $\varphi^{*}(\alpha_1), \ldots, \varphi^{*}(\alpha_m)$ in the maximal ideal $(\beta_1, \ldots, \beta_e)$.
    In fact, a general choice of those images gives a surjection. Let $M =
    \dim_{\FF} (\beta_1, \ldots, \beta_e)$ be the dimension of the \emph{ideal}
    $(\beta_1, \ldots , \beta_e)$. Then we have $mM$ parameters for the choice of
    $\varphi^{*}(\alpha_1), \ldots ,\varphi^{*}(\alpha_m)$. Two choices are equivalent if they
    have the same kernel, so that they differ by an automorphism of
    $\FF[[\beta_1, \ldots ,\beta_e]]/(\beta_1, \ldots ,\beta_e)^{k}$. This automorphisms
    group is $eM$ dimensional, thus $\dim \Hilbdiskindisk = mM - eM = (m-e)M$.

    Similarly, we may consider the fibre $\pi_2^{-1}(R)$ over a point $R\in
    \familyRm$ corresponding to a subscheme $R \subseteq \dissktrimmed{m}$. As above,
    the possible $\dissktrimmed{e}\subseteq \dissktrimmed{m}$ are
    parametrised by fixing the images of $\varphi^{*}(\alpha_1), \ldots ,\varphi^{*}(\alpha_{m})$
    in $\FF[[\beta_1, \ldots ,\beta_e]]/(\beta_1, \ldots ,\beta_e)^{k}$. The difference is
    that we have to ensure $R \subseteq \dissktrimmed{e}$. Algebraically, the
    images $\varphi^{*}(\alpha_1), \ldots , \varphi^{*}(\alpha_m)$ need to lie in the ideal
    $I(R) \subseteq (\beta_1, \ldots , \beta_e)$. Since $\dim_{\FF} I(R) = M-(k-1)$, the
    fibre has dimension $(m-e)(M - k +
    1)$.

    In particular, $\pi_2$ is equidimensional, so that the
    dimension of $\familyRm$  is given by the formula:
    \[
        \dim \familyRm = \dim \familyRe + (m-e)M - (m-e)(M-k+1) = \dim \familyRe + (m-e)(k-1).\qedhere
    \]
\end{proof}
We will now define an important class of finite schemes, analysed by Iarrobino
in~\cite{iarrobino_deforming_algebras_paper}.
\begin{df}
    An Artin algebra $A$ is \textbf{aligned} if it is isomorphic to $\FF[t]/t^k$.
    An Artin algebra $A$ is \textbf{alignable} if it is a finite flat limit of
    aligned algebras, i.e.~there exists a family of algebras with fibre $A$
    and general fibre aligned.
    A finite scheme $R = \Spec A$
    is aligned (reps.~alignable) if and only if $A$ is an aligned (respectively, alignable) algebra.
\end{df}
In terms of the Hilbert scheme, we see that $R$ is alignable if and only if
the point in the Hilbert scheme corresponding to $R$ is in the closure of the set
of points corresponding to the aligned schemes.

    Proposition~\ref{ref:codiminv} gives immediately the dimension of the set
    of alignable subschemes of a punctual Hilbert scheme.
    \begin{cor}[dimension of aligned schemes]\label{ref:alignabledim:cor}
        The locus of points of $\HilbP_k(\FF\PP^m, p)$ corresponding to aligned schemes
        has dimension $(k-1)(m-1)$.  Therefore, also the locus of alignable
        schemes has dimension $(k-1)(m-1)$.
    \end{cor}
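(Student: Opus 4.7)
The plan is to apply Proposition~\ref{ref:codiminv} (invariance of codimension) with $e=1$, taking $\familyR^e$ to be the aligned locus inside $\HilbP_k(\FF\PP^1, p_0)$. First I observe that in the one-dimensional case the aligned locus is easy to describe: any length-$k$ subscheme of $\dissk{1} = \Spec \FF[[t]]$ is of the form $\Spec \FF[[t]]/(t^k)$, which is aligned. Thus $\familyR^1 \subseteq \HilbP_k(\FF\PP^1, p_0)$ consists of a single point, is trivially Zariski-constructible, and is closed under isomorphisms since it already is the isomorphism class of $\FF[t]/t^k$ in dimension~$1$. In particular $\dim \familyR^1 = 0$.

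Next I apply Proposition~\ref{ref:codiminv}. The $\familyR^m$ produced by the proposition is, by construction, the subset of $\HilbP_k(\FF\PP^m, p)$ consisting of all schemes isomorphic to a member of $\familyR^1$, i.e.~exactly the aligned locus. The proposition yields
\[
  (k-1)m - \dim \familyR^m = (k-1)\cdot 1 - \dim \familyR^1 = k-1,
\]
so $\dim \familyR^m = (k-1)(m-1)$. This establishes the first claim.

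For the alignable locus, recall that by definition a scheme is alignable if it is a finite flat limit of aligned schemes, so the alignable locus is precisely the Zariski closure of the aligned locus in $\HilbP_k(\FF\PP^m, p)$. Since the Zariski closure of a constructible set has the same dimension as the set itself, the alignable locus also has dimension $(k-1)(m-1)$.

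The only place where there is anything subtle to check is the identification of $\familyR^m$ with the aligned locus; but this is immediate from the definition of $\familyR^m$ given in Proposition~\ref{ref:codiminv} together with the fact that $\FF[t]/t^k$ is the unique (up to isomorphism) aligned algebra of length $k$. The rest is a direct substitution into the codimension formula.
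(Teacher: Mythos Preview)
Your proof is correct and follows essentially the same approach as the paper's own argument: establish the $m=1$ case directly (a single point, dimension zero) and then invoke Proposition~\ref{ref:codiminv} with $e=1$ to transport the computation to arbitrary $m$. You are slightly more explicit than the paper in verifying the hypotheses of Proposition~\ref{ref:codiminv} (constructibility, closure under isomorphisms) and in justifying the passage from aligned to alignable via Zariski closure, but the underlying idea is identical.
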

    \begin{proof}
        If $m = 1$, then there is a unique closed subscheme of
        $\FF\PP^m$ isomorphic to $\Spec \FF[t]/t^k$ and supported at $p$, thus the
        dimension is $0$ and the claim is satisfied.

        Now let $m$ be arbitrary.  By Proposition~\ref{ref:codiminv} the
        dimension $d$ from the statement satisfies $(k-1)m - d = (k-1)$,
        thus $d = (k-1)(m-1)$.
    \end{proof}

    \def\HilbGP{\HilbPGor_k(\FF\PP^m, p)}%
    \begin{df}\label{ref:def}
        The \textbf{expected dimension} of the (smoothable) Gorenstein punctual Hilbert scheme of length $k$
        subschemes of $\FF\PP^m$ is the dimension of the family of alignable subschemes,
        i.e.~$(k-1)(m-1)$, see Corollary~\ref{ref:alignabledim:cor}.

        If a Zariski-constructible subset of the punctual Hilbert scheme has
        dimension less or equal than $(k-1)(m-1)$, then we call it
        \textbf{negligible}. In particular, the set of alignable subschemes is
        negligible.
    \end{df}

The name \emph{negligible} is taylored for the purposes of this article.
The dimension of a negligible family $\familyR$ is at most the dimension of the family of alignable schemes, 
   hence $\familyR$ does not give us any new information for the bound \eqref{equ_bound_on_dimension_of_areole_vs_gorenstein}
   on the dimension of the areole variety. Hence, such family can be neglected,
   at least as far as only dimension count is concerned.
    \begin{rem}\label{ref:invariance_of_negligible}
        Suppose that for some $k$ and $e$ the Gorenstein punctual Hilbert scheme
        $\HilbPGor_k(\FF\PP^e)$ has expected dimension. 
        Fix any $m\geq e$.
        Then the set of Gorenstein schemes in $\FF\PP^m$ of length
        $k$ and embedding dimension at most $e$ is negligible.
        Indeed, Proposition~\ref{ref:codiminv}
        implies that for every $m \geq e$ the set of schemes
        in $\HilbGP$ with embedding dimension $e$ has dimension
        $(k-1)m - (k-1)e + (k-1)(e-1) = (k-1)(m-1)$.
    \end{rem}

    \begin{rem}
        Gorenstein schemes of length at
        most $9$ are all smoothable by the results of Casnati and
        Notari, see~\cite[Theorem~A]{cn09}. Since ultimately we want to
        analyse those, we will not emphasise
        smoothability. However, the reader should note
        that Definition~\ref{ref:def} is reasonable only with the smoothability
        assumption.
    \end{rem}

    In general, there exist Gorenstein non-alignable subschemes and non-negligible
    families, see Example~\ref{ref:1551:example} below. In the remaining part
    of the appendix we show that if the length $k$ is small enough, then all
    subschemes are negligible, thus $\HilbGP$ has the expected dimension.

    We begin with the following result of Brian\c{c}on:
    \begin{obs}[{\cite[Theorem V.3.2, p.~87]{Briancon_surface}}]\label{ref:briancon:obs}
    Let $R \subset \CC\PP^2$ be any finite local scheme (not necessarily
    Gorenstein). Then $R$ is alignable.
\end{obs}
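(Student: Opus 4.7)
The plan is to reduce the statement to the technically central claim that $\HilbP_k(\CC\PP^2, p)$ is irreducible of dimension $k-1$, and then to deduce the proposition from Corollary~\ref{ref:alignabledim:cor}. Once irreducibility and the correct dimension are known, the alignable locus, being a closed subvariety of $\HilbP_k(\CC\PP^2, p)$ of dimension exactly $(k-1)(2-1) = k-1$, must coincide with the entire $\HilbP_k(\CC\PP^2, p)$, so every finite local scheme $R$ is alignable.

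To prove irreducibility, I would follow Brian\c{c}on's three-stage degeneration strategy. Identifying a finite local scheme at $p$ with $\Spec \CC[[x,y]]/I$ for an ideal $I$ of colength $k$ in the formal disc $\dissk{2}$, the first step is the Rees deformation associated to the $\mathfrak{m}$-adic filtration (equivalently, the $\mathbb{G}_m$-action with equal weights on $x$ and $y$), which produces a flat family connecting $I$ to its initial ideal $\operatorname{in}_{\mathfrak{m}}(I)$, a homogeneous ideal. The second step is a generic-weight Gr\"obner degeneration, deforming any homogeneous ideal flatly to a monomial one. Together these two steps reduce the problem to showing that every monomial ideal in $\CC[x,y]$ of colength $k$ lies in the same irreducible component as the curvilinear ideal $(x, y^k)$, whose quotient $\CC[y]/y^k$ is aligned.

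The main obstacle is this third step, which is the combinatorial heart of Brian\c{c}on's argument. Monomial ideals of colength $k$ in $\CC[x,y]$ correspond bijectively to Young diagrams $\lambda \vdash k$, with the curvilinear ideal corresponding to the column partition $(1^k)$. One must exhibit, for every $\lambda \neq (1^k)$, an explicit flat one-parameter family of ideals in $\CC[t][x,y]$ whose fiber over $t=0$ is the monomial ideal $I_\lambda$ and whose fiber over $t \neq 0$ is a monomial ideal $I_{\lambda'}$ with $\lambda'$ strictly smaller than $\lambda$ under a suitable combinatorial order (for instance the lexicographic order on column lengths). This is achieved by a \emph{box-moving} construction whose flatness requires a delicate case analysis on the boundary shape of the diagram. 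Iterating the moves, every $I_\lambda$ eventually degenerates to $(x, y^k)$, showing that the curvilinear stratum is dense in $\HilbP_k(\CC\PP^2, p)$ and hence that $\HilbP_k(\CC\PP^2, p)$ is irreducible. The dimension equality $\dim \HilbP_k(\CC\PP^2, p) = k-1$ then follows either from the explicit parametrization of the dense curvilinear stratum or, alternatively, from Fogarty's smoothness theorem for $\Hilb_k$ of a smooth surface combined with properness and birationality of the Hilbert--Chow morphism; this closes the argument.
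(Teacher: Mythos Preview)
The paper does not supply its own proof of this proposition: it is stated with a direct citation to Brian\c{c}on \cite[Theorem V.3.2, p.~87]{Briancon_surface} and used as a black box, so there is nothing to compare against beyond the reference itself.

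Your proposal goes further and sketches the content of Brian\c{c}on's argument. The overall logical skeleton is sound: once one knows that $\HilbP_k(\CC\PP^2,p)$ is irreducible of dimension $k-1$, Corollary~\ref{ref:alignabledim:cor} gives that the closed alignable locus has the same dimension, hence is the whole space. The three-step degeneration you describe (to the associated graded, then to a monomial ideal, then a combinatorial ``box-moving'' induction down to the curvilinear ideal) is indeed the shape of Brian\c{c}on's proof. One caveat: the delicate part is not just flatness of the box-moving families but ensuring that all intermediate families stay \emph{punctual}, i.e.\ supported at the single point $p$; this is automatic for the $\mathfrak{m}$-adic and monomial degenerations but must be checked for the explicit one-parameter families in the last step. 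Your sketch acknowledges this as ``a delicate case analysis'', which is fair---the full argument in Brian\c{c}on's monograph occupies many pages and cannot reasonably be reproduced here, so citing it, as the paper does, is the appropriate level of detail for the present article.
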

Thus the set of schemes with embedding dimension at most two is negligible by
Remark~\ref{ref:invariance_of_negligible}.
To analyse other schemes, we need a few results from the theory of
finite Gorenstein algebras, which we summarize below.

\subsection{Hilbert functions}\label{sect_Hilbertfunction}

\def\DmmA{\mathfrak{m}}%
Let $(A, \DmmA, \FF)$ be a local Artin Gorenstein $\FF$-algebra. We will recall two
discrete invariants of $A$.
The \textbf{Hilbert function} $H_A$ is defined by $H_A(n) = \dim
\DmmA^n/\DmmA^{n+1}$.

The \textbf{socle degree} of $A$ is
the maximal $s$ such that $H_A(s) \ne 0$.
For the following, we recall that the Hilbert function $H = H_A$ admits a canonical 
symmetric decomposition, see~\cite[Chapter 1, Thm 1.5]{ia94}. If we write
\[H = (1, H(1), H(2), \ldots ,H(s), 0, 0,  \ldots ),\] where $H(s)$ is the last non-zero value, then there exist
canonically defined vectors $\Delta_0 = \Delta_{A, 0},  \ldots , \Delta_{s-2} = \Delta_{A, s-2}$
of length $s+1$ whose elements are non-negative integers and such that
\begin{enumerate}
    \item $\Delta_i(s+1-i+n) = 0$ for all $n \geq 0$,
    \item $\Delta_i(s-i-n) = \Delta_i(n)$ for all $0\leq n\leq s-i$,
    \item $H(n) = \sum_{i=0}^{s-2} \Delta_{i}(n)$ for all $n\leq s$.
    \item $\Delta_0$ is a Hilbert function of an Artin Gorenstein algebra of
        socle degree $s$, in
        particular $\Delta_0(n) > 0$ for all $n\leq s$.
\end{enumerate}
For a fixed length $k$ there exists only finitely many Hilbert functions and
their decompositions, so to prove that a family is negligible, we may consider each Hilbert
function separately, i.e.~assume that the Hilbert function and its
decomposition are fixed.

\subsection{Apolarity}\label{sect_apolarity}

\def\hook{\lrcorner}%
\def\annn#1#2{\operatorname{ann}_{#1}(#2)}%
The canonical references for this section are~\cite{ia94} and
\cite{iarrobino_kanev_book_Gorenstein_algebras}.

Let $S = \FF[[\alpha_1, \ldots ,\alpha_e]]$ be a power series ring in $e$ variables.
We interpret $\alpha_i\in S$ as a partial differential operator
$\frac{\partial}{\partial x_i}$ acting on a polynomial ring $P = \FF[x_1, \ldots
,x_e]$. We denote this action by $(-)\hook(-)$. For example, $\alpha_1 \alpha_2 \hook
x_1^2x_2^3 = 6x_1x_2^2$ and $\alpha_3^2\hook x_1x_2x_3 = 0$.

    Let $(A, \DmmA, \FF)$ be a local Artin Gorenstein $\FF$-algebra. Suppose
    that the embedding dimension of $A$ is at most $e$. We may write $A$ as a
    quotient $A = S/I$ for some ideal $I$.  It turns out that  the annihilator
    $\annn{I}{P} = \{f\in P\ |\ I \hook f = 0\}$ is crucial for analysing
    $A$. Below we list its properties.
    \begin{enumerate}
        \item $\dim_{\FF} \annn{I}{P} = \dim_{\FF} A$,
        \item $\annn{I}{P}$ is an $S$-submodule of $P$, which is
            principal~i.e.~$\annn{I}{P} = S\hook f$ for some $f\in P$. This
            $f$ is not unique; in fact $S\hook f = S\hook g$ where $g = s\hook
            f$ for any invertible element $s\in S$. For example, $S\hook
            x^2y^2 = S\hook (x^2y^2 + xy^2 + x + 1)$.
        \item Conversely, given $f\in S$ we may consider its annihilator
            $I := \annn{f}{S} \subseteq S$. This is an ideal of $S$ and the
            quotient $S/I$ is a local Artin Gorenstein $\FF$-algebra.
    \end{enumerate}

    Summarising the discussion above, we have the following theorem,
    originally due to Macaulay.
    \begin{thm}[{\cite[Theorem~21.6, Exercise~21.7]{Eisenbud}}]
        Let $S$ be a power series ring over $\FF$ in $e$ variables, acting on
        a polynomial ring $P$ as described above. We have a
        correspondence between Artin Gorenstein quotients of $S$ and
        polynomials $f\in P$:
        \begin{enumerate}
            \item to every $f\in P$ we may assign the Artin Gorenstein algebra $S/\annn{S}{f}$,
                called the \textbf{apolar algebra} of~$f$.
            \item to every Artin Gorenstein algebra $S/I$ we may assign the module
                $\annn{P}{I}$, which is a principal $S$-module and choose any generator $f\in
                \annn{P}{I}$. We call any such $f$ a \textbf{dual socle
                generator} of $S/I$.
        \end{enumerate}
    \end{thm}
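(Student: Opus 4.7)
The plan is to establish Macaulay's duality via a carefully constructed perfect pairing between $S$ and $P$, then translate the Gorenstein property into the cyclicity of the dual module.

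First, I would define the pairing $\langle -, - \rangle \colon S \times P \to \FF$ by $\langle s, f \rangle := (s \hook f)|_{x=0}$, i.e.\ the constant term of $s \hook f$. Direct verification on monomials shows that $\langle \alpha^{\mathbf{a}}, x^{\mathbf{b}} \rangle = \mathbf{b}!\,\delta_{\mathbf{a},\mathbf{b}}$, so for each integer $d \ge 0$ this pairing restricts to a perfect pairing between $S/\mathfrak{m}^{d+1}$ (finite-dimensional) and the subspace $P_{\le d} \subset P$ of polynomials of degree at most $d$. Crucially, the pairing is compatible with the $S$-module structures in the sense that $\langle s \cdot t, f \rangle = \langle t, s \hook f \rangle$ for all $s,t\in S$ and $f\in P$, so the annihilator operation respects the module structures.

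Next I would prove the bijective correspondence between $\mathfrak{m}$-primary ideals $I\subseteq S$ and finite-dimensional $S$-submodules $M\subseteq P$. Given such an $I$, define $I^{\perp} := \annn{P}{I} = \{f \in P : I \hook f = 0\}$; conversely, given a finite-dimensional $S$-submodule $M \subseteq P$, define $M^{\perp} \subseteq S$ to be the annihilator of all of $M$. Because every $f\in P$ has bounded degree, some power of $\mathfrak{m}$ annihilates it, so $M^{\perp}$ is $\mathfrak{m}$-primary. The compatibility of the pairing gives $(I^{\perp})^{\perp} = I$ and $(M^{\perp})^{\perp} = M$, and the perfectness of the pairing on the truncations yields the length equality $\dim_{\FF} S/I = \dim_{\FF} I^{\perp}$. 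This already handles part (ii) as a set-theoretic correspondence; it remains to identify when $I^{\perp}$ is cyclic.

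The central step is the Gorenstein criterion. Recall that an Artin local algebra $A = S/I$ is Gorenstein exactly when its socle $\mathrm{Soc}(A) = (0 :_A \mathfrak{m})$ is one-dimensional over $\FF$. Under the pairing, the socle of $A = S/I$ corresponds via $I^{\perp}$ to the quotient $I^{\perp}/\mathfrak{m} \cdot I^{\perp}$: indeed, $g \in I^{\perp}$ lies in $\mathfrak{m}\cdot I^{\perp}$ if and only if $\langle \text{Soc}(A), g \rangle = 0$, which one checks by unwinding the definitions. By Nakayama's lemma applied to the finitely generated $S$-module $I^{\perp}$ (which is supported at the maximal ideal), the minimal number of generators equals $\dim_{\FF}(I^{\perp}/\mathfrak{m} \cdot I^{\perp})$. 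Hence $A$ is Gorenstein iff $I^{\perp}$ is generated by a single element $f \in P$. This produces the dual socle generator in (ii), and shows directly in (i) that $S/\annn{S}{f} = S/(S\hook f)^{\perp}$ is Gorenstein, since $S \hook f$ is cyclic by construction.

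The main obstacle is the Gorenstein criterion: one has to carefully identify the socle of $S/I$ inside the dual module $I^{\perp}$ and invoke Nakayama to conclude cyclicity. The ambiguity in the dual socle generator (namely that $S\hook f = S\hook (u\hook f)$ for any unit $u\in S$) is then immediate from the fact that multiplication by a unit is an $S$-linear automorphism of the cyclic module $S\hook f$, so any generator differs from any other by a unit action.
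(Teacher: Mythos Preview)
The paper does not give its own proof of this theorem: it is stated with a citation to \cite[Theorem~21.6, Exercise~21.7]{Eisenbud} and then used as a black box. So there is no paper proof to compare against.

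That said, your argument is essentially the standard one underlying the cited reference and is correct. The key identifications are all right: the pairing $\langle s,f\rangle = (s\hook f)(0)$ is perfect on each truncation, the contravariant bijection $I \leftrightarrow I^{\perp}$ between $\mathfrak{m}$-primary ideals and finite-dimensional $S$-submodules of $P$ follows, and the crucial step---that under the induced perfect pairing $A \times I^{\perp} \to \FF$ one has $(\mathfrak{m}\cdot I^{\perp})^{\perp} = \operatorname{Soc}(A)$, hence $\dim_{\FF}\operatorname{Soc}(A) = \dim_{\FF}(I^{\perp}/\mathfrak{m}\,I^{\perp})$---is exactly what links the Gorenstein condition to cyclicity of $I^{\perp}$ via Nakayama. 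One small wording issue: in part (i) you should also note that $S/\annn{S}{f}$ is Artin (i.e.\ finite-dimensional), which follows because $S\hook f \subseteq P_{\le \deg f}$ is finite-dimensional and the length equality gives $\dim_{\FF} S/\annn{S}{f} = \dim_{\FF} S\hook f$. Otherwise the sketch is complete.
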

    We will use this theorem to bound the number of possible quotients of $S$
    by bounding the number of possible polynomials in $P$ whose apolar
    algebras have small length. It is worth noting that the symmetric
    decomposition of the Hilbert function of an Artin Gorenstein algebra
    (defined in subsection~\ref{sect_Hilbertfunction}) is tightly connected
    with the number of partial derivatives of $f$, which is implicitly used in
    the proof of Lemma~\ref{ref:neglige4:lem}. See~e.g.~\cite{ia94} for
    details.

\subsection{Punctual Gorenstein Hilbert schemes \texorpdfstring{$\HilbPGor_k(\FF\PP^m, p)$}{} for
\texorpdfstring{$k\leq 9$}{k at most 9}.}
Every Artin Gorenstein algebra $A$ of socle degree one is an apolar algebra of
a linear form, thus it is aligned. Therefore the set of socle degree one
algebras is negligible. In the following Lemma~\ref{ref:negligequadrics:lem}
we extend this result to socle degree two.
\begin{lema}\label{ref:negligequadrics:lem}
    The set $\mathcal{H} \subseteq \HilbPGor_k(\FF\PP^m, p)$
    consisting of local Artin Gorenstein algebras of socle degree two is
    negligible.
\end{lema}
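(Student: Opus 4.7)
The plan is to identify $\mathcal{H}$ via the apolarity correspondence with a family of quadratic forms, and then to apply the invariance of codimension (Proposition~\ref{ref:codiminv}) to bound its dimension. First, by the symmetric decomposition recalled in Subsection~\ref{sect_Hilbertfunction}, a local Artin Gorenstein algebra $A$ of socle degree $s = 2$ has only the block $\Delta_0$ in its decomposition, which forces the Hilbert function to be $H_A = (1, k-2, 1)$; in particular the embedding dimension equals $e := k-2$. Hence $\mathcal{H}$ is empty when $m < e$ (which is trivially negligible), and otherwise every member of $\mathcal{H}$ is isomorphic to some scheme in $\familyR^e := \mathcal{H} \cap \HilbPGor_k(\FF\PP^e, p_0)$. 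Thus Proposition~\ref{ref:codiminv} yields
\[
  \dim \mathcal{H} = \dim \familyR^e + (k-1)(m - e),
\]
and it remains to compute $\dim \familyR^e$.

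By Macaulay's apolarity theorem, the schemes in $\familyR^e$ correspond to ideals $\operatorname{ann}_S(f) \subset S := \FF[[\alpha_1, \ldots, \alpha_e]]$ for dual socle generators $f \in P = \FF[x_1, \ldots, x_e]$. Since the socle degree is two, I may take $f$ to be a homogeneous quadratic form, and the length condition $\len(S/\operatorname{ann}_S(f)) = k = e+2$ forces the rank of $f$ to be exactly $e$. For a non-degenerate quadratic form the partial derivatives $\partial_i f$ span all of $P_1$, so any shift of $f$ by lower-order terms is already contained in $S \hook f$ and does not change the apolar ideal; consequently two such quadratic forms generate the same ideal if and only if they are proportional. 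Hence $\familyR^e$ identifies with a Zariski-open subset of $\PP(\Sym^2 \FF^e)$, is closed under isomorphisms, and has dimension $\binom{e+1}{2} - 1 = \binom{k-1}{2} - 1$.

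Substituting into the formula above and simplifying gives $\dim \mathcal{H} = (k-1)(m-1) - \binom{k-2}{2}$, which is at most the expected dimension $(k-1)(m-1)$, proving negligibility (with equality only when $k \le 3$). The main technical point to verify carefully is the apolarity step: that a non-degenerate quadratic form is uniquely determined, up to scalar, by the ideal it cuts out, with no freedom coming from lower-order perturbations of $f$. This follows from a direct computation of $S \hook (f_2 + f_1 + f_0)$, exploiting that $\partial_i f_2$ span $P_1$ whenever $f_2$ has full rank, but it requires some careful bookkeeping to make the identification $\familyR^e \simeq \PP(\Sym^2 \FF^e)^{\circ}$ airtight.
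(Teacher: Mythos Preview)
Your proof is correct and follows the same overall strategy as the paper: identify the Hilbert function as $(1,k-2,1)$, reduce via Proposition~\ref{ref:codiminv} to embedding dimension $e=k-2$, compute the dimension of the parametrising family there, and check the inequality. The only difference is that the paper obtains the dimension $\binom{m+2}{2}-(m+2)$ of the family (with $m=e=k-2$) by citing Iarrobino's result on compressed algebras \cite{iarrobino_compressed_artin}, whereas you rederive this number directly from apolarity by identifying $\familyR^e$ with the non-degenerate locus in $\PP(\Sym^2\FF^e)$; the two counts agree, and your version is a bit more self-contained at the cost of the extra bookkeeping you flag in the last paragraph.
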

\begin{proof}
    All members of $\mathcal{H}$ have length $k$ and socle degree $2$, hence their Hilbert function is equal to $(1, k-2, 1)$.
    In particular, $k-2\le m$, so using Proposition~\ref{ref:codiminv} similarly as in
    Remark~\ref{ref:invariance_of_negligible} we may assume $k-2 =m$.
    Algebras from $\mathcal{H}$ have length $m+2$ and are parametrised by a set of dimension
    $\binom{m+2}{2} - (m+2)$, see \cite[Thm 1,
    p.~350]{iarrobino_compressed_artin}. Therefore, $\mathcal{H}$ is negligible if
    \[
         \binom{m+2}{2} - (m+2) \leq (m+1)(m-1),
    \]
    which is true for every $m\geq 1$.
\end{proof}

Let $A$ be an algebra of socle degree $s\geq 3$.
Then $\Delta_{A, s-2} = (0, q, 0)$,
where $\Delta_{A, \bullet}$ were defined in Subsection~\ref{sect_Hilbertfunction}.
In the following we investigate the case when $q> 0$.
\begin{lema}\label{ref:negligesquares:lem}
    Consider the set $\mathcal{H}(q) \subseteq \HilbPGor_k(\FF\PP^m, p)$
    consisting of local Artin Gorenstein algebras of any socle degree $s\geq 3$ satisfying
    $\Delta_{s-2} = (0, q, 0)$.
    If all Gorenstein schemes of length $k - q$ and embedding dimension at most $m$ are
    negligible, then $\mathcal{H}(q)$ is negligible.
\end{lema}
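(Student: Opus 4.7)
The strategy is to parametrise $\mathcal{H}(q)$ via Macaulay apolarity and to use Iarrobino's symmetric decomposition to peel off a contribution of length exactly $q$, reducing the dimension count to the already-negligible length-$(k-q)$ case.

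First, I would reduce to a fixed embedding dimension. As in the proof of Lemma~\ref{ref:negligequadrics:lem}, by Proposition~\ref{ref:codiminv} and Remark~\ref{ref:invariance_of_negligible} it is enough to fix an embedding dimension $e \le m$ and show that the sublocus of $\HilbPGor_k(\FF\PP^e, p)$ consisting of algebras from $\mathcal{H}(q)$ with embedding dimension exactly $e$ has dimension at most $(k-1)(e-1)$. I may also fix the socle degree $s \geq 3$ and the full Hilbert function together with its symmetric decomposition; there are only finitely many such data in total for a given length $k$, so this fixing is harmless.

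Second, I would invoke apolarity (Section~\ref{sect_apolarity}): each $A$ in the locus is of the form $A_f = S/\mathrm{ann}(f)$ for a dual socle generator $f \in P = \FF[x_1, \ldots, x_e]$, unique up to the action of the automorphism group of $S$ together with scaling of $f$. I would then translate the numerical condition $\Delta_{A, s-2} = (0, q, 0)$ into a statement about $f$ via Iarrobino's theory \cite{ia94}: after a suitable choice within its $\mathrm{Aut}(S)$-orbit, $f$ admits a canonical decomposition
\[
   f \;=\; f' \;+\; \ell_1^{\,s-2} + \cdots + \ell_q^{\,s-2},
\]
where $f'$ has apolar algebra $A_{f'}$ of length $k - q$ with $\Delta_{A_{f'}, s-2} = 0$, and $\ell_1, \ldots, \ell_q \in P_1$ are linear forms linearly independent modulo the data of $f'$. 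The key point is that the vanishing of $\Delta_{s-2}$ at positions $0$ and $2$ forces the higher-order derivatives of each $\ell_j^{s-2}$ (degrees $0$, $2, \ldots, s-2$) to be already encoded by $f'$, so adding these powers increases the length by exactly $q$ and contributes the prescribed $(0,q,0)$ to the symmetric decomposition.

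Third, the resulting map
\[
   \mathcal{H}(q)\big|_e \;\longrightarrow\; \HilbPGor_{k-q}(\FF\PP^e, p),
   \qquad A_f \;\longmapsto\; A_{f'},
\]
has fibres parametrised (modulo permutation, scaling, and the stabiliser of $A_{f'}$) by the choice of $q$ linear forms in $P_1$, contributing at most $q(e-1)$ parameters. By the inductive hypothesis, $\dim \HilbPGor_{k-q}(\FF\PP^e, p) \leq (k-q-1)(e-1)$, hence
\[
   \dim \mathcal{H}(q)\big|_e \;\le\; (k-q-1)(e-1) + q(e-1) \;=\; (k-1)(e-1),
\]
which is exactly the bound needed. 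Transferring back to $\FF\PP^m$ via Proposition~\ref{ref:codiminv} yields $\dim \mathcal{H}(q) \le (k-1)(m-1)$, i.e.~negligibility.

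The main obstacle is the second step: rigorously establishing the decomposition $f = f' + \sum \ell_j^{s-2}$ with the correct parameter count and verifying that the gauge-fixing in the $\mathrm{Aut}(S)$-orbit does not hide any moduli. This is a technical application of Iarrobino's theory of symmetric decompositions of Hilbert functions of Gorenstein Artin algebras, and requires care both in reducing the decomposition to the ``normal form'' above and in controlling how the stabiliser of $f'$ acts on the choice of the linear forms $\ell_j$.
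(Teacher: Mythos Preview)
Your second step contains a genuine error: the decomposition $f = f' + \ell_1^{\,s-2} + \cdots + \ell_q^{\,s-2}$ is not what $\Delta_{s-2} = (0,q,0)$ encodes. In Iarrobino's symmetric decomposition the piece $\Delta_i$ is governed by the degree-$(s-i)$ part of the dual socle generator, so $\Delta_{s-2}$ is controlled by the \emph{quadratic} part $f_2$, not by the degree-$(s-2)$ part. The correct normal form (this is precisely \cite[Prop.~4.5]{cjn13}, which the paper invokes) is $f = F + x_1^{2} + \cdots + x_q^{2}$ with $F$ in variables disjoint from $x_1,\ldots,x_q$. Your $(s-2)$-th powers would contribute to $\Delta_2$ instead; already for $s=3$ they degenerate to linear forms, which do nothing to the apolar algebra, and for $s\ge 5$ each $\ell_j^{\,s-2}$ adds $s-2>1$ new partials, not one.

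Even after correcting the exponent to $2$, the fibration step is incomplete. The assignment $A_f \mapsto A_{f'}$ is sensible on isomorphism classes, but you would have to realise it as a constructible map of subschemes of $\FF\PP^e$ with controlled fibres, and the asserted fibre bound $q(e-1)$ is not justified: one must account for the $O(q)$-symmetry of $\sum \ell_j^2$, the choice of lifts of the $\ell_j$ modulo the tangent space of $A_{f'}$, and the comparison of $\operatorname{Aut}(A_f)$ with $\operatorname{Aut}(A_{f'})$.

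The paper sidesteps both issues by a different mechanism: induction on $q$ combined with a degeneration. From the same normal form $F + x_1^2 + \cdots + x_q^2$ it cites \cite[Example~5.12]{cjn13} to exhibit every such $A$ as an embedded limit of algebras $B \times \FF$ with $\Delta_{B,s-2} = (0,q-1,0)$ and $\len B = k-1$. By induction the locus of such $B$ has dimension at most $(k-2)(m-1)$; adding a free $\FF$-point costs $m$ parameters, giving $(k-1)(m-1)+1$; and since $\mathcal{H}(q)$ lies in the boundary of this family one drops by at least one, landing exactly at $(k-1)(m-1)$. No explicit fibre count over the length-$(k-q)$ locus is needed.
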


\begin{proof}
    We argue by induction on $q$. In the base case $q = 0$ there is nothing to
    prove.

    Take any scheme $\Spec A \in \mathcal{H}(q)$.
    By \cite[Prop~4.5]{cjn13} the algebra $A$ is isomorphic to the apolar algebra of 
       $F + x_1^2 + \ldots  + x_q^2$, where $F$ is a polynomial in
    variables different from $x_1, \ldots ,x_q$. By \cite[Example 5.12]{cjn13}
    this algebra is an embedded limit of algebras of the form $B \times \FF$, where $B$
    has the same socle degree as $A$ and satisfies $\Delta_{B, s-2} = (0,
    q-1, 0)$.
    By induction the set of schemes corresponding to such $B$ is negligible,
    i.e.~has dimension at most $((k-1)-1)(m-1)$.
    Therefore the set of schemes corresponding to $B\times \FF$ has dimension
    at most $(k-2)(m-1) + m = (k-1)(m-1) + 1$. Since $\mathcal{H}(q)$ lies on
    the border of this set, $\dim \mathcal{H}(q) \leq (k-1)(m-1) + 1 - 1 =
    (k-1)(m-1)$.
\end{proof}

\begin{lema}\label{ref:neglige4:lem}
    Let $k\leq 10$ and $\mathcal{R} \subseteq \HilbPGor_k(\FF\PP^m, p)$ be the
    subset of schemes corresponding to local Artin Gorenstein algebras of socle degree
    at most four. Then $\mathcal{R}$ is negligible.
\end{lema}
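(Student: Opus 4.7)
The argument is by induction on $k$. The base cases $k \le 3$ are covered by Corollary~\ref{ref:alignabledim:cor} and Lemma~\ref{ref:negligequadrics:lem}, since every Gorenstein algebra of length at most $3$ has socle degree at most $2$. For the inductive step I would first use Proposition~\ref{ref:codiminv} together with Remark~\ref{ref:invariance_of_negligible} to reduce to the case where the embedding dimension of the algebras under consideration equals the ambient dimension $m$. Since there are only finitely many Hilbert functions $H$ of length at most $10$ with socle degree at most $4$, and finitely many symmetric decompositions $(\Delta_0, \ldots, \Delta_{s-2})$ for each such $H$ (see Subsection~\ref{sect_Hilbertfunction}), I would stratify $\mathcal{R}$ by the pair $(H, (\Delta_i))$ and prove negligibility of each stratum separately.

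For a stratum corresponding to socle degree $s \in \{3, 4\}$ with $\Delta_{s-2} \neq 0$, Lemma~\ref{ref:negligesquares:lem} applies directly: writing $\Delta_{s-2} = (0, q, 0, \ldots, 0)$ with $q > 0$, it reduces the problem to the negligibility of Gorenstein schemes of length $k-q < k$ with the same socle degree $s \le 4$, in embedding dimension at most $m$. This follows from the inductive hypothesis on $k$, together with Brian\c{c}on's theorem (Proposition~\ref{ref:briancon:obs}) to cover the small embedding dimension regime where the socle degree might a priori be larger.

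The only remaining case is $\Delta_{s-2} = 0$. For $s = 1$ the algebra is the base field and for $s = 2$ Lemma~\ref{ref:negligequadrics:lem} already concludes, so the genuine work is for $s \in \{3, 4\}$ with $\Delta_{s-2}=0$. Here the apolarity correspondence (Subsection~\ref{sect_apolarity}) identifies the stratum with an orbit space of dual socle generators $f \in P$ of a prescribed shape, under the action of the automorphism group $\mathrm{Aut}(S)$ of the power series ring $S = \FF[[\alpha_1, \ldots, \alpha_m]]$. Invoking Iarrobino's bounds on the dimension of the family of Gorenstein algebras with a fixed symmetric decomposition~\cite{ia94}, I would verify case by case that each of the finitely many remaining Hilbert functions --- $(1, h, h, 1)$ with $h \le 4$ when $s = 3$, and $(1, h_1 + a, h_2 + a, h_1, 1)$ with $2h_1 + h_2 + 2a \le 8$ when $s = 4$ --- yields a stratum of dimension at most $(k-1)(m-1)$.

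The hard part will be this last step. The tightest case is expected to be $H = (1, 4, 4, 1)$ with $k = 10$, where the family of dual socle generators $f$ is the largest among the compressed cases and no inductive reduction is available; the inequality against $(k-1)(m-1) = 9(m-1)$ must be checked by an explicit parameter count for $f$ modulo the natural action of $\mathrm{Aut}(S)$, and a priori one should worry that for large $m$ the orbit-space dimension could exceed the target bound.
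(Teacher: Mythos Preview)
Your outline follows the paper's proof closely: stratify by Hilbert function and symmetric decomposition, dispose of $s\le 2$ via Lemma~\ref{ref:negligequadrics:lem}, reduce to $\Delta_{s-2}=0$ via Lemma~\ref{ref:negligesquares:lem}, and then handle the remaining $s\in\{3,4\}$ strata by an explicit parameter count coming from apolarity. The paper makes this last step concrete: for $s=3$ it quotes Iarrobino's dimension $\binom{m+3}{3}-(2m+2)$ for the compressed locus, and for $s=4$ with decomposition $(1,a,b,a,1)+(0,c,c,0)$ it bounds the stratum by $\binom{a+3}{4}+ac+\binom{a+c+3}{3}-(2+2a+2c+b)$ and then checks the finitely many admissible triples $(a,b,c)$.

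Your closing worry about ``large $m$'' is, however, the one place where you have not fully absorbed your own reduction. Once you have restricted to schemes whose embedding dimension equals the ambient $m$, you have $m=H(1)$, and since $k\le 10$ this forces $m\le 8$ (indeed $m\le 4$ once $s\in\{3,4\}$ and $\Delta_{s-2}=0$). In particular for $H=(1,4,4,1)$ one necessarily has $m=4$, and the inequality to check is simply $\binom{7}{3}-10=25\le 27=(k-1)(m-1)$. There is no asymptotic regime in $m$; the verification is genuinely finite, and this is precisely what makes the case-by-case argument go through.

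One small caution on your invocation of Lemma~\ref{ref:negligesquares:lem}: its \emph{hypothesis} asks that all Gorenstein schemes of length $k-q$ be negligible, not only those of socle degree at most four, so your inductive hypothesis on $k$ does not literally feed into the lemma as stated. What you are actually using (and what the paper also uses, tacitly) is the \emph{proof} of that lemma, in which the algebra $B$ arising in the limit has the same socle degree as $A$; this is fine, but you should make it explicit rather than appealing to Brian\c{c}on for a ``larger socle degree'' case that, under this reading, never occurs.
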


\begin{proof}
    The family $\mathcal{R}$ divides into finitely many families according to
    Hilbert function and its symmetric decomposition. Therefore we may assume
    these are fixed in $\mathcal{R}$. Thus we may speak about Hilbert
    function, socle degree etc.

    We begin with a series of reductions.
    By induction and Remark~\ref{ref:invariance_of_negligible}, we may assume that the claim is true for schemes with
    embedding dimension less that $m$.
    Let $s$ be the socle degree of any member of $\mathcal{R}$. By
    Lemma~\ref{ref:negligequadrics:lem} we may assume that $s\geq 3$.
    By Lemma~\ref{ref:negligesquares:lem}, we may assume that $\Delta_{s-2}
    = (0, 0, 0)$.
    If $s = 3$, elements of $\mathcal{R}$ are parametrized by a set
    of dimension $\binom{m+3}{3} - (2m+2)$, see
    \cite[Thm 1, p.~350]{iarrobino_compressed_artin}, and $10 \geq k = 2m+2$
    (ibid), so
    $m\leq 4$. Then we
    need to check that $(k-1)(m-1) = (2m+1)(m-1) \geq \binom{m+3}{3} - (2m+2)$ for all
    $m\leq 4$.

    Similarly, if $s = 4$, then the Hilbert function
    has decomposition of the form $(1, a, b, a, 1) + (0, c, c, 0)$, where $a,
    b > 0$.
    We see that $k = 2 + 2a + 2c + b \leq 10$, $m = H(1) = a + c$.
    Moreover $b\leq \binom{a+1}{2}$ and from the Macaulay's Growth Theorem
    (see e.g.~\cite[Section~2.5]{cjn13}) it
    follows that either $b > 2$ or $a = b = 1$ or $a = b = 2$.

    Such algebras are parametrized by 
    \begin{itemize}
      \item the choice of a quartic in $a$ variables, which gives at most dimension $\binom{a+3}{4}$,
      \item the choice of these $a$ variables out of the linear space of $a+c$ variables, which gives at most $ac$,
      \item and a choice of polynomial of degree $3$ in $a+c$ variables: $\binom{a+c+3}{3}$,
      \item minus the length: $2 + 2a + 2c + b$.
    \end{itemize}
    Finally we get a parameter set of dimension at most
    \begin{equation}\label{eq:dimforfour}
        \binom{a+3}{4} + ac + \binom{a+c+3}{3} - (2 + 2a + 2c + b)
    \end{equation}
    Now one needs to the check that for all $a, b, c$ such that $2 + 2a +
    2c + b \leq 10$ satisfying the constraints above, the number
    \eqref{eq:dimforfour} is not higher than $(k - 1)(m-1)$.
\end{proof}

\begin{lema}\label{ref:neglige:lem}
    Suppose $k \leq 9$. Then the whole Gorenstein punctual Hilbert scheme
      \[
        \HilbPGor_k(\FF\PP^m, p)
      \]
      is  negligible.
\end{lema}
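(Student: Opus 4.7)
I will prove the lemma by strong induction on $k$, combining the prior three lemmas with Brian\c{c}on's alignability result (Proposition~\ref{ref:briancon:obs}).

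The base case $k \le 4$ is immediate: any local Artin Gorenstein algebra of length $\le 4$ has socle degree at most $3$, so Lemma~\ref{ref:neglige4:lem} applies. For the inductive step, fix $5 \le k \le 9$ and assume the claim holds for all lengths less than $k$. By Lemma~\ref{ref:neglige4:lem}, the portion of $\HilbPGor_k(\FF\PP^m,p)$ parametrizing algebras of socle degree $\le 4$ is negligible, so I may restrict to algebras $A$ of socle degree $s\ge 5$. Since $H_A(0)=H_A(s)=1$ and $H_A(i)\ge 1$ for $0\le i\le s$, we get $s+1\le k\le 9$, hence $s\in\{5,6,7,8\}$. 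By the inductive hypothesis the locus of length-$(k-q)$ Gorenstein schemes is negligible for every $q\ge 1$, so Lemma~\ref{ref:negligesquares:lem} lets me further reduce to algebras whose symmetric decomposition satisfies $\Delta_{s-2}=(0,0,0)$.

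Next, I bound the embedding dimension $e := H_A(1)$. From $1+e+\sum_{i=2}^{s-1}H_A(i)+1 = k$ together with $H_A(i)\ge 1$, we obtain $e \le k-s$; in particular $e\le 2$ when $s\ge 7$, and $e\le 4$ in general. In the case $e\le 2$, the scheme has embedding dimension at most two, so by Proposition~\ref{ref:briancon:obs} it is alignable; the family of such schemes is therefore contained in the alignable locus, which by Corollary~\ref{ref:alignabledim:cor} together with the invariance of codimension (Remark~\ref{ref:invariance_of_negligible}) has dimension $(k-1)(m-1)$ and is thus negligible.

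It remains to handle $e\ge 3$, which (thanks to the bound $e\le k-s$) forces $s\in\{5,6\}$ and $k=9$. A direct enumeration of Hilbert functions $H=(1,e,h_2,\dots,h_{s-1},1)$ with $h_i\ge 1$ summing to $9$, compatible with Macaulay's growth theorem (in particular, $H(n)=1$ forces $H(n+1)\le 1$), yields only a short finite list of candidates; representative examples are $(1,4,1,1,1,1)$ and $(1,3,2,1,1,1)$ for $s=5$, and $(1,3,1,1,1,1,1)$ for $s=6$. For each candidate I will compute the canonical symmetric decomposition of Iarrobino (Subsection~\ref{sect_Hilbertfunction}) and verify that $\Delta_0$ is forced to be the constant Hilbert function $(1,1,\ldots,1)$ of the aligned algebra, so that the excess $H-\Delta_0$ must be absorbed by the $\Delta_i$ for $i\ge 1$; the support and symmetry constraints then force the nonzero mass to land in $\Delta_{s-2}$. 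This directly contradicts the reduction $\Delta_{s-2}=(0,0,0)$, making this sub-case vacuous and completing the proof.

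The main obstacle is the case analysis when $e\ge 3$: one must carefully enumerate all Hilbert functions permitted by Macaulay's growth theorem, then exhibit, for each, that the only way to distribute $H-\Delta_0$ among the symmetric pieces puts nonzero weight in $\Delta_{s-2}$. Each individual check is elementary, but completeness of the enumeration is the delicate point; in principle, the sharper (and more technical) structural results on Artin Gorenstein algebras could be invoked to shorten this step, at the cost of transparency.
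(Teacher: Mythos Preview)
Your reduction strategy---induction on $k$, then successively invoking Lemma~\ref{ref:neglige4:lem}, Lemma~\ref{ref:negligesquares:lem}, and Brian\c{c}on (Proposition~\ref{ref:briancon:obs})---matches the paper's proof exactly; in fact your explicit induction on $k$ is cleaner, since the paper leaves that induction implicit when it appeals to Lemma~\ref{ref:negligesquares:lem}. The divergence is only in how the final contradiction is extracted once one has reduced to $s\ge 5$, $H(1)\ge 3$, and $\Delta_{s-2}=0$. The paper avoids any enumeration: setting $e_i:=\Delta_i(1)$, one has $\sum_j \Delta_0(j)\ge 4+2e_0$ (the vector has length $s+1\ge 6$ with all middle entries positive) and $\sum_j \Delta_i(j)\ge 2e_i$ for $0<i\le s-3$ (since $\Delta_i(s-i-1)=e_i$ by symmetry and $s-i-1>1$), whence $k=\sum_{i,j}\Delta_i(j)\ge 4+2\sum_i e_i = 4+2H(1)\ge 10$. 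This one-line count is uniform in $k$ and bypasses both Macaulay growth and the case-by-case decomposition checks you propose; your enumeration reaches the same contradiction but is unnecessarily laborious.

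One genuine slip: the bound $e\le k-s$ with $e\ge 3$ and $s\ge 5$ does \emph{not} force $k=9$; it also allows $k=8$, $s=5$, $H=(1,3,1,1,1,1)$. Your decomposition argument still disposes of it (one finds $\Delta_3=(0,2,0)$), but the case must appear in your list. Also, the phrase ``force the nonzero mass to land in $\Delta_{s-2}$'' is imprecise for $H=(1,3,2,1,1,1)$, where admissible numerical decompositions may place mass in $\Delta_1$ or $\Delta_2$ as well; what is true, and sufficient, is that $\Delta_{s-2}(1)>0$ in every admissible decomposition.
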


\begin{proof}
    Let $\mathcal{R} := \HilbPGor_k(\FF\PP^m, p)$.
    As before, we may fix a Hilbert function $H$ with symmetric
    decomposition $\Delta$, and a socle degree $s$.
    By Proposition~\ref{ref:briancon:obs} we may assume that the embedding
    dimension is at least three.
    By Lemma~\ref{ref:negligesquares:lem} we may assume that
    $\Delta_{s-2} = (0, 0, 0)$.
    By Lemma~\ref{ref:neglige4:lem} we may assume $s > 4$.

    We will prove that no decomposition $\Delta$ satisfying all above constrains exists.

    Let $e_i := \Delta_{A, i}(1)$. Then $H(1) = \sum e_i$.
    Note that $\Delta_{0} = (1, e_1,  \ldots , e_1, 1)$ is a vector of
    length $s+1\geq 6$, thus its sum is at least $4 + 2e_1$.
    Note that by symmetry of $\Delta_{i}$ we have $e_i =
    \Delta_{i}(s-i-1)$ and since $s-i-1 > 1$, we have $\sum_j
    \Delta_{i}(j) \geq 2e_i$. Summing up
    \[k = \sum H = \sum_i
    \sum_j \Delta_{i}(j)\geq 4 + 2\sum_i e_i \geq 4 + 2\cdot 3 = 10.\]
    This contradicts the assumption $k\leq 9$.
\end{proof}

\begin{thm}[The Hilbert scheme has expected
    dimension]\label{ref:expecteddim:thm}
    Let $\FF = \RR$ or $\FF = \CC$.
    For $k\le 9$ the dimension of $\HilbGP$  is expected as defined in~\ref{ref:def},
    i.e.
    \[
       \dim \HilbGP = (k-1)(m-1).
    \]
\end{thm}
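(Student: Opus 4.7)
The plan is to prove the equality by matching lower and upper bounds, after first reducing the real case to the complex case. By the base change invariance of dimension recorded in \eqref{equ_dim_Hilb_R_and_Hilb_C}, we have $\dim \HilbPGor_k(\RR\PP^m, p) = \dim \HilbPGor_k(\CC\PP^m, p)$, so from this point on I may assume $\FF = \CC$, which allows free use of the earlier material that presumed an algebraically closed base field.

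For the lower bound $\dim \HilbGP \geq (k-1)(m-1)$, I would observe that the aligned algebra $\FF[t]/t^k$ is a hypersurface (complete intersection) of embedding dimension one, and hence is Gorenstein. Therefore the locus of aligned subschemes of $\FF\PP^m$ supported at $p$ sits inside $\HilbGP$, and by Corollary~\ref{ref:alignabledim:cor} this locus already has dimension $(k-1)(m-1)$. This gives the lower bound.

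For the upper bound $\dim \HilbGP \leq (k-1)(m-1)$, I would invoke Lemma~\ref{ref:neglige:lem}, which asserts precisely that for $k \leq 9$ the whole Gorenstein punctual Hilbert scheme is negligible, i.e.~of dimension at most $(k-1)(m-1)$, matching Definition~\ref{ref:def}. Combining the two bounds yields the claimed equality.

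The real obstacle is encapsulated in Lemma~\ref{ref:neglige:lem} itself, and hence is inherited rather than encountered here. Its proof stratifies $\HilbGP$ according to the Hilbert function together with its symmetric decomposition $\Delta_\bullet$ from Subsection~\ref{sect_Hilbertfunction}, and then reduces successively to ever more restrictive strata via the inductive chain of Lemmas~\ref{ref:negligequadrics:lem}, \ref{ref:negligesquares:lem}, and \ref{ref:neglige4:lem}: schemes of embedding dimension $\leq 2$ are ruled out by Brian\c{c}on's Proposition~\ref{ref:briancon:obs}, the case $\Delta_{s-2} = (0,q,0)$ with $q>0$ is handled by the limit argument using $F + x_1^2 + \cdots + x_q^2$ forms, and socle degree $\leq 4$ is treated by explicit Macaulay-style parameter counts via apolarity (Subsection~\ref{sect_apolarity}). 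The combinatorial summation $k = \sum H \geq 4 + 2\sum_i e_i \geq 10$ at the end of the proof of Lemma~\ref{ref:neglige:lem} is the genuinely sharp step: it is precisely what pins the threshold at $k \leq 9$, and extending the theorem to larger $k$ would require breaking past this pigeonhole barrier by controlling new, genuinely non-alignable families that appear at length $10$ and beyond.
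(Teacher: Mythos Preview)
Your proposal is correct and follows essentially the same approach as the paper: reduce to $\FF = \CC$ via the base change identity \eqref{equ_dim_Hilb_R_and_Hilb_C}, obtain the lower bound from the aligned locus (Corollary~\ref{ref:alignabledim:cor}), and obtain the upper bound from Lemma~\ref{ref:neglige:lem}. The only cosmetic difference is that the paper does the algebraically closed case first and then base-changes, whereas you base-change at the outset; the ingredients and logic are identical.
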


\begin{proof}
    Suppose first that $\FF$ is algebraically closed.
    By Lemma~\ref{ref:neglige:lem} all subschemes of length at most $9$ are
    negligible, which by definition means that the dimension is not larger
    that the expected one, thus it is the expected one by
    Corollary~\ref{ref:alignabledim:cor}.

    Now let $\FF$ be arbitrary and denote by $\bar{\FF}$ its algebraic closure.
    Then as discussed in \eqref{equ_dim_Hilb_R_and_Hilb_C} above we have
    $\HilbPGor_k(\FF\PP^m, p) \times_{\Spec \FF} \Spec
    \bar{\FF}  = \HilbPGor_k(\bar{\FF}\PP^m, \bar{p})$, where $\bar{p}$
    is unique point in the preimage of $p$ in $\bar{\FF}\PP^m$. In particular, $\dim
    \HilbPGor_k(\FF\PP^m, p)$ is at most the expected one, thus it is the
    expected one by Corollary~\ref{ref:alignabledim:cor}.
\end{proof}

\begin{exm}\label{ref:1551:example}
    The dimension of the locus of alignable subschemes in
    $\HilbPGor_{12}(\FF\PP^5, p)$ is
    $(12-1)(5-1) = 44$. This locus is Zariski-irreducible and its general member is, by definition,
    isomorphic to $\Spec \FF[t]/t^{12}$. The subset $Z$ of
    $\HilbPGor_{12}(\FF\PP^5, p)$ parametrising subschemes with
    Hilbert function $(1, 5, 5, 1)$ has dimension $\binom{5+3}{3} - 12 = 44$,
    see~\cite[Thm~I]{iarrobino_compressed_artin}, thus $Z$ is not contained
    in the locus of alignable algebras, i.e.~a general subscheme with Hilbert
    function $(1, 5, 5, 1)$ is not alignable.

    Similarly, the subset $Z$ of $\HilbPGor_{16}(\FF\PP^7, p)$
    parametrising subschemes with Hilbert function $(1, 7, 7, 1)$ has
    dimension $\binom{7+3}{3} - 16 = 104 = (16 - 1)\cdot 7$. It is worth
    noting that both subsets parametrise \emph{smoothable} algebras, by
    \cite[Theorem~1]{JJ1551} and \cite[Theorem~7.3]{bertone2012division}.
\end{exm}

\newcommand{\etalchar}[1]{$^{#1}$}
\def\cprime{$'$}\def\cprime{$'$} \def\cprime{$'$}
\providecommand{\bysame}{\leavevmode\hbox to3em{\hrulefill}\thinspace}
\providecommand{\MR}{\relax\ifhmode\unskip\space\fi MR }
\providecommand{\MRhref}[2]{%
  \href{http://www.ams.org/mathscinet-getitem?mr=#1}{#2}
}
\providecommand{\href}[2]{#2}

 \bibliographystyle{amsalpha}
\end{document}